\theoremstyle{plain}
\newtheorem{theorem}{Theorem}
\newtheorem{lemma}{Lemma}
\newtheorem{corollary}{Corollary}
\newtheorem{proposition}{Proposition}
\theoremstyle{definition}
\newtheorem{example}{Example}
\theoremstyle{remark}
\newtheorem{remark}{Remark}
\DeclareMathOperator{\co}{co}
\DeclareMathOperator{\dist}{dist}
\DeclareMathOperator{\diam}{diam}
\DeclareMathOperator*{\argmin}{arg\,min}
\DeclareMathOperator{\aff}{aff}
\author{M.V. Dolgopolik\footnote{Institute for Problems in Mechanical Engineering of the Russian Academy of Sciences,
Saint Petersburg, Russia}
\footnote{This work was performed in the IPME RAS and supported by the Russian Science Foundation 
(Grant No. 20-71-10032).}}
\title{An acceleration technique for methods for finding the nearest point in a polytope and computing the distance
between two polytopes}
\begin{document}

\maketitle

\begin{abstract}
We present a simple and efficient acceleration technique for an arbitrary method for computing the Euclidean projection
of a point onto a convex polytope, defined as the convex hull of a finite number of points, in the case when the number
of points in the polytope is much greater than the dimension of the space. The technique consists in applying any given
method to a ``small'' subpolytope of the original polytope and gradually shifting it, till the projection of the given
point onto the subpolytope coincides with its projection onto the original polytope. The results of numerical
experiments demonstrate the high efficiency of the proposed acceleration technique. In particular, they show that the
reduction of computation time increases with an increase of the number of points in the polytope and is proportional to
this number for some methods. In the second part of the paper, we also discuss a straightforward extension of the
proposed acceleration technique to the case of arbitrary methods for computing the distance between two convex
polytopes, defined as the convex hulls of finite sets of points.
\end{abstract}

\section{Introduction}

The problems of computing the Euclidean projection of a point onto a polytope and the distance between two polytopes are
one of central problems of computational geometry whose importance for applications cannot be overstated. A need for
fast and reliable methods for solving these problems arises in nonsmooth optimization
\cite{BagirovKarmitsaMakela,BagirovGaudioso}, submodular optimization \cite{Fujishige,Chakrabarty,FujishigeIsotani},
support vector machine algorithms \cite{KeerthiShevade,MavroforakisMargaritis,BarberoLopez,ZengYangCheng}, and many
other applications.

Since the mid 1960s, a vast array of methods for finding the nearest point in a polytope and computing the distance
between two polytopes has been developed. In the case of the nearest point problem, among them are various methods
based on quadratic programming (e.g. the Gilbert method \cite{Gilbert}), the Wolfe method \cite{Wolfe} (see also the
recent complexity analysis of a version of this method \cite{DeLoeraHaddock}), the Mitchell-Dem'yanov-Malozemov (MDM)
method \cite{MDM} and its modifications \cite{BarberoLopez,ZengYangCheng}, subgradient algorithms based on nonsmooth
penalty functions \cite{StetsyukNurminski}, geometric algorithms \cite{Muckeley,SheraliChoi}, a dual algorithm
\cite{FujishigeZhan90}, etc. Let us also mention that a number of nontrivial equivalent reformulations of the nearest
point problem was discussed in \cite{Gabidullina}.

Although the problem of computing the distance between two polytopes can be easily reduced to the nearest point problem
and the aforementioned methods can be applied to find its solution, several specialized methods for solving the distance
problem have been developed as well. A highly efficient method for computing the distance between two convex hulls in
three-dimensional space was developed by Gilbert et al. \cite{GilbertJohnsonKeerthi}, while methods for computing the
distance in the two-dimensional case were studied in \cite{Edelsbrunner,DobkinKirkpatrick,GuibasHsuZhang,YangMengMeng}.
A modification of the MDM method for computing the distance between two convex hulls in space of arbitrary dimension was
proposed \cite{Kaown,ALTMDM}, while an algorithm for solving this problem based on the fixed-point theory was studied
in \cite{LlanasEtAl}. Let us also mention a dual algorithm \cite{FujishigeZhan92}, a recursive algorithm
\cite{SekitaniYamamoto}, the sequential minimal optimization method \cite{Platt}, and some less known methods
\cite{Kozinets64,Plotnikov} for computing the distance between two polytopes.

Our main goal is to develop and analyse a general acceleration technique that can be applied to \textit{any} method for
computing the Euclidean projection of a point onto a polytope, defined as the convex hull of a finite number of points,
in the case when the number of points is significantly greater than the the dimension of the space. We present this
technique as a meta-algorithm that on each iteration employs a chosen algorithm for finding the nearest point in a
polytope as a subroutine. The acceleration technique itself consists in applying the given algorithm to a ``small''
subpolytope of the original polytope and gradually shifting it with each iteration till the required projection is
computed. 

We study both a theoretical version of the proposed acceleration technique and its robust version that is more suitable
for practical implementation, since it takes into account finite precision of computations. We prove correctness and
finite termination of both these versions under suitable assumptions, and present some very promising results of
numerical experiments. These results demonstrate a drastic reduction of computation time for several different methods
for finding the nearest point in a polytope achieved with the use of our acceleration technique. In particular,
numerical experiments show that the reduction of computation time increases with an increase of the number of points
$\ell$ in the polytope and is proportional to this number for large $\ell$.

It should be noted that the proposed acceleration technique shares many similarites with the Wolfe method \cite{Wolfe}
and the Frank-Wolfe algorithms \cite{LacosteJulienJaggi2013,LacosteJulienJaggi2015}. Nonetheless, there are important
differences between these methods related to the way in which they remove redundant points on each iteration. We
present a theoretical discussion of these differences and some results of numerical experiments demonstrating how a
different way of removing redundant points presented in this paper leads to a significant reduction of the number of
iterations (shifts of the subpolytope) in comparison with the Wolfe method.

In the second part of the paper, we extend the proposed acceleration technique to the case of \textit{arbitrary} methods
for computing the Euclidean distance between two polytopes, defined as the convex hulls of finite sets of points, in the
case when the number of points in each of these sets is significantly greater than the dimension of the space. We
present a theoretical analysis of this extension and some results of numerical experiments demonstrating the high
efficiency of the acceleration technique in the case of the distance problem.

The paper is organised as follows. Section~\ref{sect:NearestPoint} contains a detailed analysis and discussion of 
an acceleration technique for arbitrary methods for computing the Euclidean projection of a point onto a polytope.
Subsections~\ref{subsect:TheoreticalNearestPoint} and \ref{subsect:ExchangeRule} are devoted to the study of a
theoretical scheme of the acceleration technique, while Subsection~\ref{subsect:RobustNearestPoint} contains a robust
version of this technique that is more suitable for practical implementation. Some promising results of numerical
experiments for the proposed acceleration technique are collected in Subsection~\ref{subsect:NumExperiments_NPP}, while
Section~\ref{sect:ComparisonWithWolfe} contains a discussion of the differences between our acceleration technique and
the Wolfe method, as well as some results of numerical experiments highlighting these differences. Finally, a
straighforward extension of this acceleration technique to the case of method for computing the distance between two
polytopes is studied in Section~\ref{sect:DistanceBetweenPolytopes}.

\section{Finding the nearest point in a polytope}
\label{sect:NearestPoint}

In this section, we study a general acceleration technique for an \textit{arbitrary} algorithm for computing the
Euclidean projection of a point onto a polytope, defined as the convex hull of a finite number of points in
$\mathbb{R}^d$, in the case when the number of points in the polytope is much greater than the dimension of the space.
In other words, we discuss an acceleration technique for methods of solving the following optimization problem
\[
  \min_x \: \| x - z \| \enspace \text{ subject to } \enspace
  x \in P := \co\big\{ x_1, \ldots, x_{\ell} \big\} \subset \mathbb{R}^d
  \qquad \eqno{(\mathcal{P})}
\]
in the case when $\ell \gg d$. Here $z, x_1, \ldots, x_{\ell} \in \mathbb{R}^d$ are given points and $\| \cdot \|$ is
the Euclidean norm.

\subsection{General acceleration technique}
\label{subsect:TheoreticalNearestPoint}

Before we proceed to the description of the acceleration technique, let us first recall the
following well-known optimality condition for the problem $(\mathcal{P})$ (see, e.g. \cite{Wolfe,MDM}),
for the sake of completeness.

\begin{proposition} \label{prp:NearPoint_OptCond}
A point $x_* \in P$ is a globally optimal solution of the problem $(\mathcal{P})$ if and only if
\begin{equation} \label{eq:NearPoint_OptCond}
  \langle x_* - z, x_i - x_* \rangle \ge 0 \quad \forall i \in I = \{ 1, \ldots, \ell \}.
\end{equation}
\end{proposition}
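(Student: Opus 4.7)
The plan is to reduce the problem to the familiar variational characterization of the Euclidean projection onto a convex set and then exploit the fact that $P$ is the convex hull of the finite set $\{x_1,\ldots,x_\ell\}$ to restrict attention to these $\ell$ points. A convenient tool throughout is the elementary identity
\[
  \| x - z \|^2 = \| x_* - z \|^2 + 2 \langle x_* - z, x - x_* \rangle + \| x - x_* \|^2,
\]
which holds for all $x, x_*, z \in \mathbb{R}^d$ and which will let me compare $\|x - z\|^2$ with $\|x_* - z\|^2$ using only a linear functional in $x$. Since minimizing $\| x - z \|$ over $P$ is equivalent to minimizing $\| x - z \|^2$ over $P$, I will work with the squared distance throughout.

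For the necessity direction, I would assume that $x_*$ is a global minimizer and fix an arbitrary index $i \in I$. Convexity of $P$ implies $x_*(t) := x_* + t(x_i - x_*) \in P$ for every $t \in [0,1]$, so the above identity applied to $x = x_*(t)$ and the inequality $\| x_*(t) - z \|^2 \ge \| x_* - z \|^2$ yield
\[
  2 t \langle x_* - z, x_i - x_* \rangle + t^2 \| x_i - x_* \|^2 \ge 0 \quad \forall t \in [0, 1].
\]
Dividing by $t > 0$ and letting $t \to 0^+$ produces \eqref{eq:NearPoint_OptCond}.

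For the sufficiency direction, I would take an arbitrary $x \in P$ and write it as a convex combination $x = \sum_{i=1}^{\ell} \lambda_i x_i$ with $\lambda_i \ge 0$ and $\sum_i \lambda_i = 1$. Multiplying \eqref{eq:NearPoint_OptCond} by $\lambda_i$ and summing gives $\langle x_* - z, x - x_* \rangle \ge 0$ for every $x \in P$. Substituting this into the identity above yields $\| x - z \|^2 \ge \| x_* - z \|^2$, which is exactly the global optimality of $x_*$.

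There is no real obstacle here: the statement is a standard first-order characterization, and the only small point that needs care is the passage from the finite index set $I$ to the whole polytope $P$, which is handled by linearity of $\langle x_* - z, \cdot \rangle$ together with the convex-combination representation of points of $P$. I would present the two implications in the order above and then close the proof.
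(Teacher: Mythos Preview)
Your proof is correct and entirely standard: the necessity direction via the line segment $x_*(t)=x_*+t(x_i-x_*)$ and the sufficiency direction via the convex-combination representation together with the quadratic expansion identity are exactly the classical argument. Note, however, that the paper does not actually supply a proof of this proposition; it merely states the result as well-known and cites \cite{Wolfe,MDM}. So there is nothing to compare against on the paper's side, and your write-up would serve perfectly well as the missing justification.
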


Suppose that an algorithm $\mathcal{A}$ for solving the nearest point problem $(\mathcal{P})$ is given. For any point 
$w \in \mathbb{R}^d$ and any polytope $Q \subset \mathbb{R}^d$ the algorithm returns a unique solution 
$x_* = \mathcal{A}(w, Q)$ of the problem
\[
  \min_{x \in Q} \: \| x - w \|.
\]
No other assumptions on the algorithm $\mathcal{A}$ are imposed.

\begin{remark}
It should be noted that throughout this article we implicitly view all algorithms not in the way they are viewed in
computer science and the theory of algorithms, but simply as single-valued maps. In particular, the algorithm
$\mathcal{A}$ is a single-valued function mapping the Cartesian product of $\mathbb{R}^d$ and the set of all polytopes
$Q \subset \mathbb{R}^d$ into the space $\mathbb{R}^d$. It can be explicitly defined as
$\mathcal{A}(w, Q) = \argmin_{x \in Q} \| x - w \|$. One can also look at algorithm $\mathcal{A}$ as a black box with
input $(w, Q)$ and output $\argmin_{x \in Q} \| x - w \|$, which is akin to the way oracles are viewed within
optimization theory (cf. \cite{Nesterov}).
\end{remark}

Our aim is to design an acceleration technique for the algorithm $\mathcal{A}$ that would improve its performance. This
acceleration technique will be presented as a meta-algorithm that utilises the algorithm $\mathcal{A}$ as a subroutine
on each iteration.

Recall that we are only interested in the case when the number of points $\ell$ is significantly greater than the
dimension of the space $d$. Furthermore, nothing is known about the structure of the algorithm $\mathcal{A}$.
Therefore, perhaps, the only straightforward way to potentially accelerate this algorithm is by applying $\mathcal{A}$
to a polytope generated by a relatively small number of points $\{ x_{i_1}, \ldots, x_{i_s} \}$ from the set 
$\{ x_1, \ldots, x_{\ell} \}$ and then replacing some of the extreme points of the polytope 
$P_0 = \co\{ x_{i_1}, \ldots, x_{i_s} \}$ with different points from $P$ and repeating the same procedure till 
the projection of $z$ onto $P_0$ coincides with the projection of $z$ onto $P$. The key ingredient of this strategy is
\textit{an exchange rule} that replaces points from $P_0$ by different points from $P$.

Denote $\mathbb{N} = \{ 0, 1, 2, \ldots \}$ and $\mathbb{N}^* = \mathbb{N} \setminus \{ 0 \}$. To define an accelerating
meta-algorithm for solving the problem $(\mathcal{P})$, we need to choose parameter $s \in \mathbb{N}^*$ that defines
the size of the \textit{subpolytope} $P_0$ and an exchange rule $\mathcal{E}$ whose output consists of two index sets
$I_1$ and $I_2$. The first set $I_1 \subseteq \{ i_1, \ldots, i_s \}$ defines the points in $P_0$ that must be removed
from $P_0$, while the second index set $I_2 \subseteq I \setminus \{ i_1, \ldots, i_s \}$ defines the points that are
included into $P_0$ before the next iteration. In the general case, the index sets $I_1$ and $I_2$ might have arbitrary
sizes, that change with each iteration, and $I_1$ can be empty. However, for the sake of simplicity, we restrict our
consideration to the case when the cardinalities $|I_1|$ and $|I_2|$ of the sets $I_1$ and $I_2$ coincide and are equal
to some number $q \in \mathbb{N}^*$. Any exchange rule $\mathcal{E}$ that for a given input consisting of $s$ indices 
$\{ i_1, \ldots, i_s \}$ returns two collections of indices $(I_1, I_2)$ with 
\[
  I_1 \subseteq \{ i_1, \ldots, i_s \}, \quad 
  I_2 \subseteq I \setminus \{ i_1, \ldots, i_s \}, \quad
  |I_1| = |I_2| = q
\]
is called an $(s, q)$-\textit{exchange rule}.

\begin{remark}
It should be noted that an actual exchange rule obviously requires more input parameters than just a set of indices 
$\{ i_1, \ldots, i_s \}$. In particular, it might need some information about the point $z$, the set 
$\{ x_1, \ldots, x_{\ell} \}$, the nearest point $\mathcal{A}(z, P_0)$ in the polytope $P_0$, etc. as its input.
However, for the sake of shortness, below we explicitly indicate only a set of indices as an input of an exchange rule
$\mathcal{E}$.
\end{remark}

Thus, we arrive at the following scheme of the meta-algorithm for solving the problem $(\mathcal{P})$ given in
Meta-algorithm~\ref{alg:NearestPoint}. In its core, this meta-algorithm consists in choosing a \textit{subpolytope}
$P_n$ of the original polytope $P$ and finding the projection $y_n$ of the given point $z$ onto $P_n$. If this
projection happens to coincide with the projection of $z$ onto $P$ (this fact is verified with the use of optimality
conditions from Prop.~\ref{prp:NearPoint_OptCond}), then the meta-algorithm terminates. Otherwise, one replaces some
points in $P_n$, thus constructing a new subpolytope $P_{n + 1}$, and repeats the same procedure till the projection of
$z$ onto $P_n$ coincides with the projection of $z$ onto the original polytope $P$. 

From the geometric point of view, the meta-algorithm consists in choosing a ``small'' subpolytope $P_n$ in the original
polytope $P$ and gradually ``shifting'' it with each iteration till $P_n$ contains the projection of $z$ onto $P$. This
procedure is performed with the hope that it is much faster to compute a projection of a point onto a small subpolytope
and gradually shift it, rather than to compute the projection of this point onto the original polytope with a very large
number of vertices (i.e. with $\ell \gg d$). The results of numerical experiments reported below demonstrate that this
hope is fully justified.

\begin{algorithm}[t]  \label{alg:NearestPoint}
\caption{Meta-algorithm for finding the nearest point in a polytope.}

\noindent\textbf{Input:} {a point $z \in \mathbb{R}^d$, a collection of points $\{ x_1, \ldots, x_{\ell} \} \subset
\mathbb{R}^d$, an algorithm $\mathcal{A}$ for solving the nearest point problem, parameters 
$s, q \in \{ 1, \ldots, \ell \}$ with $s \ge q$, and an $(s, q)$-exchange rule $\mathcal{E}$.}

\noindent\textbf{Initialization:} {Put $n = 0$, choose an index set $I_n \subseteq I$ with $|I_n| = s$, and define 
$P_n = \co\{ x_i \mid i \in I_n \}$.}

\noindent\textbf{Step 1:} {Compute $y_n = \mathcal{A}(z, P_n)$. If $y_n$ satisfies the optimality condition
\begin{equation} \label{eq:TrialPointOptCond}
  \langle y_n - z, x_i - y_n \rangle \ge 0 \quad \forall i \in I,
\end{equation}
\textbf{return} $y_n$.}

\noindent\textbf{Step 2:} {Compute $(I_{1n}, I_{2n}) = \mathcal{E}(I_n)$ and define
\[
  I_{n + 1} = \Big( I_n \setminus I_{1n} \Big) \cup I_{2n}, \quad 
  P_{n + 1} = \co\big\{ x_i \bigm| i \in I_{n + 1} \big\}.
\]
Set $n = n + 1$ and go to \textbf{Step 1}.}
\end{algorithm}

The two following lemmas describe simple conditions on the exchange rule $\mathcal{E}$ ensuring that the proposed
meta-algorithm indeed solves the problem $(\mathcal{P})$ and, furthermore, terminates after a finite number of
iterations. These lemmas provide one with two convenient criteria for choosing effective exchange rules. Although
(rather awkward) proofs of these lemmas are obvious, we present them for the sake of completeness.

For any set $\Omega \subset \mathbb{R}^d$ and any $x \in \mathbb{R}^d$ denote by 
$\dist(x, \Omega) = \inf_{y \in \Omega} \| x - y \|$ the distance from $x$ to $\Omega$.

\begin{lemma} \label{lem:DistDecay}
Suppose that the exchange rule $\mathcal{E}$ satisfies \textbf{the distance decay condition}: if for some 
$n \in \mathbb{N}$ the point $y_n$ does not satisfy optimality condition \eqref{eq:TrialPointOptCond}, then
\begin{equation} \label{eq:DistDecayDef}
  \dist(z, P_{n + 1}) < \dist(z, P_n).
\end{equation}
Then Meta-algorithm~\ref{alg:NearestPoint} terminates after a finite number of steps and returns an optimal solution of
the problem $(\mathcal{P})$.
\end{lemma}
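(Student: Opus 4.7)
The plan is to split the argument into two independent pieces: correctness of the returned point when the algorithm halts, and finiteness of the number of halts. Both pieces are short, and the hypothesis \eqref{eq:DistDecayDef} is used only for the second.

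First, I would dispose of correctness. If the algorithm stops at iteration $n$, then by construction $y_n = \mathcal{A}(z, P_n) \in P_n \subseteq P$, and $y_n$ satisfies \eqref{eq:TrialPointOptCond}, i.e. $\langle y_n - z, x_i - y_n\rangle \ge 0$ for every $i \in I$. This is precisely the optimality condition \eqref{eq:NearPoint_OptCond} from Proposition~\ref{prp:NearPoint_OptCond}, so $y_n$ is a globally optimal solution of $(\mathcal{P})$.

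For finite termination, the key observation is that each subpolytope $P_n$ is encoded by its index set $I_n \subseteq I$, and the $(s,q)$-exchange rule preserves the cardinality: $|I_{n+1}| = |I_n \setminus I_{1n}| + |I_{2n}| = (s-q)+q = s$ by induction on $n$. Hence every $I_n$ lies in the finite collection of $s$-subsets of $I$, of which there are only $\binom{\ell}{s}$. I would then argue that no two distinct iterations can share the same index set. Indeed, since $y_n = \mathcal{A}(z, P_n)$ is the unique projection of $z$ onto $P_n$, we have $\|z - y_n\| = \dist(z, P_n)$, and by the distance decay hypothesis the sequence $\dist(z, P_0), \dist(z, P_1), \ldots$ is strictly decreasing for as long as the algorithm runs. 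If $I_m = I_n$ for some $m < n$, then $P_m = P_n$ and therefore $\dist(z, P_m) = \dist(z, P_n)$, contradicting strict monotonicity along $m, m+1, \ldots, n$.

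Since only finitely many distinct index sets are available and the algorithm cannot revisit one, iteration \textbf{Step 2} can be executed only finitely many times, so \textbf{Step 1} must eventually return, yielding termination. There is essentially no hard step here; the only subtlety worth spelling out is the justification that $|I_n| = s$ for all $n$ (so that "there are finitely many admissible index sets" really applies), and that the strict inequality in \eqref{eq:DistDecayDef} — rather than a mere non-increase — is what rules out cycling.
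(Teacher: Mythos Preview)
Your proof is correct and follows essentially the same approach as the paper: correctness via Proposition~\ref{prp:NearPoint_OptCond}, and finite termination via the observation that the strictly decreasing sequence $\dist(z,P_n)$ forces all index sets $I_n$ to be distinct among the finitely many $s$-subsets of $I$. You are slightly more explicit than the paper in justifying that $|I_n|=s$ for all $n$ and in emphasizing that the strict inequality is what precludes cycling, but the argument is otherwise identical.
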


\begin{proof}
From Proposition~\ref{prp:NearPoint_OptCond} and the termination criterion \eqref{eq:TrialPointOptCond} of
Meta-algorithm~\ref{alg:NearestPoint} it follows that if this algorithm terminates in a finite number of steps, then the
last computed point $y_n$ (and the output of Meta-algorithm~\ref{alg:NearestPoint}) is an optimal solution of the
problem $(\mathcal{P})$. Thus, we only need to prove that the algorithm terminates in a finite number of steps.

To prove the finite termination, note that there is only a finite number of distinct subsets of the set 
$\{ x_1, \ldots, x_{\ell} \}$ with cardinality $s$. Moreover, from the distance decay condition 
\eqref{eq:DistDecayDef} it follows that if the algorithm does not terminate in $n \in \mathbb{N}$
iterations, then all index sets $I_0, I_1, \ldots, I_n$ are distinct. Therefore, in a finite number of steps the
algorithm must find a point $y_n$ satisfying the termination criterion \eqref{eq:TrialPointOptCond}.
\end{proof}

\begin{remark}
If the exchange rule $\mathcal{E}$ satisfies the distance decay condition, then Meta-algorithm~\ref{alg:NearestPoint}
generates a finite sequence of polytopes $\{ P_n \} \subset P$ such that $P_n \ne P_k$ for any $n \ne k$. Note that
the length of such sequence does not exceed the number of $s$-combinations of the set $\{ 1, \ldots, \ell \}$, which is
equal to $\binom{\ell}{s} = \mathcal{O}(\ell^s)$. Therefore, Meta-algorithm~\ref{alg:NearestPoint} has polynomial in
$\ell$ complexity, provided the exchange rule $\mathcal{E}$ satisfies the distance decay condition and has polynomial
in $\ell$ complexity as well.
\end{remark}

\begin{lemma} \label{lem:SubpolytopeSize}
Let $\ell \ge d + 1$ and an $(s, q)$-exchange rule $\mathcal{E}$ with $s \ge q$ satisfy the distance decay condition for
any point $z \in \mathbb{R}^d$ and any polytope $P = \co\{ x_1, \ldots, x_{\ell} \}$. Then $s \ge d + 1$.
\end{lemma}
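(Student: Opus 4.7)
The plan is to argue by contradiction: assume $s \le d$ and exhibit a point $z \in \mathbb{R}^d$ and points $x_1, \ldots, x_\ell$ along with an initial index set on which Meta-algorithm~\ref{alg:NearestPoint} must violate the distance decay condition. The driving observation is that any subpolytope $P_J = \co\{x_i : i \in J\}$ with $|J| = s$ is contained in an affine subspace of dimension at most $s - 1 \le d - 1$ of $\mathbb{R}^d$, hence is closed with empty interior, and is therefore nowhere dense.

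Using $\ell \ge d + 1$, I would choose the points $x_1, \ldots, x_\ell$ so that $P$ is $d$-dimensional; for instance, take $x_1, \ldots, x_{d+1}$ to be the vertices of a nondegenerate $d$-simplex and, if $\ell > d + 1$, let the remaining points coincide with $x_1$. There are only finitely many subsets $J \subseteq \{1, \ldots, \ell\}$ with $|J| = s$, so the union $U$ of the corresponding subpolytopes $P_J$ is a finite union of closed nowhere-dense sets and is thus itself nowhere dense. In particular, $\interior(P) \setminus U$ is nonempty, and I would pick $z$ from this set. The point of this choice is that $z \in P$ forces the unique projection of $z$ onto $P$ to be $z$ itself, while at the same time $z$ lies in no subpolytope $P_n$ of the prescribed size $s$.

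Starting from any initial index set $I_0$ with $|I_0| = s$ and running the meta-algorithm, the iterate $y_n = \mathcal{A}(z, P_n) \in P_n$ therefore satisfies $y_n \ne z$ at every step, so by Proposition~\ref{prp:NearPoint_OptCond} it can never satisfy the termination criterion \eqref{eq:TrialPointOptCond}. Invoking the distance decay condition at every iteration yields a strictly decreasing sequence $\{\dist(z, P_n)\}_{n \in \mathbb{N}}$. But each $P_n$ is determined by an $s$-subset of $\{1, \ldots, \ell\}$, so only finitely many values $\dist(z, P_n)$ can occur, contradicting the strict decrease. The only delicate step is the generic-position choice of $z$; once the nowhere-dense property of the finite union of lower-dimensional subpolytopes is in hand, the rest of the argument is a direct combination of Proposition~\ref{prp:NearPoint_OptCond} with the distance decay condition.
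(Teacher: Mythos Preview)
Your proof is correct and follows the same overall contradiction strategy as the paper: construct $z$ lying in $P$ but outside every size-$s$ subpolytope, so the stopping criterion \eqref{eq:TrialPointOptCond} is never met, contradicting the finite termination forced by the distance decay condition (Lemma~\ref{lem:DistDecay}).

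The execution, however, is genuinely different. The paper writes down explicit coordinates for $z = 0$ and for $x_1, \ldots, x_{d+1}$, then verifies by hand that any $d$ of these points are linearly independent (so $0 \notin P_K$ for $|K| \le d$) and exhibits the specific convex combination showing $0 \in P$. You instead use a dimension/genericity argument: each $P_J$ with $|J| \le d$ sits in a proper affine subspace, hence is closed and nowhere dense, and a finite union of such sets cannot cover $\interior(P)$; any $z$ in the complement works. Your route is cleaner and avoids all coordinate computations; it also explicitly handles the case $\ell > d + 1$ by padding with repeated points, which the paper glosses over by simply setting $\ell = d + 1$. The paper's route, on the other hand, is fully constructive and does not appeal to any topological reasoning. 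Both reach the same contradiction; your final step (strictly decreasing sequence among finitely many values) is just an inlined version of the paper's appeal to Lemma~\ref{lem:DistDecay}.
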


\begin{proof}
Suppose by contradiction that $s \le d$. Let us provide a particular point $z$ and a particular polytope $P$ for which
any $(s, q)$-exchange rule with $q \le s \le d$fails to satisfy the distance decay condition.

Let $z = 0$. If $d = 1$, define $\ell = 2$, $x_1 = 1$, and $x_2 = -1$. In this case $s = q = 1$ and for any choice of
$I_0$ it is obviously impossible to satisfy the condition $\dist(z, P_1) < \dist(z, P_2)$.

Let now $d \ge 2$. Put $z = 0$, $\ell = d + 1$, and define the points $x_i$ as follows:
\begin{gather*}
  x_1 = (1, 0, \ldots, 0, -1), \: x_2 = (0, 1, 0, \ldots, 0, -1), \ldots, \:
  x_{d - 2} = (0, \ldots, 0, 1, 0, -1),
  \\
  x_{d - 1} = (-1, \ldots, -1, 1, -1), \quad x_d = (-1, \ldots, -1), \quad x_{d + 1} = (0, \ldots, 0, 1).
\end{gather*}
As is easily seen, any $d$ points from the set $\{ x_1, \ldots, x_{d + 1} \}$ are linearly independent and, in addition,
$0 \in P$, since
\[
  \sum_{i = 1}^{d + 1} \alpha_i x_i = 0, \quad \sum_{i = 1}^{d + 1} \alpha_i = 1
\]
for
\[
  \alpha_1 = \ldots = \alpha_{d - 2} = \frac{1}{2(d - 1)}, \quad \alpha_{d - 1} = \alpha_d = \frac{1}{4(d - 1)}, 
  \quad \alpha_{d + 1} = \frac{1}{2}.
\]
Hence, in particular, for any index set $K \subset I$ with $|K| = s \le d$ one has $\dist(z, P_K) > 0$, where 
$P_K = \co\{ x_i \mid i \in K \}$. Therefore, for any choice of an $(s, q)$-exchange rule with $s \le d$ and $s \ge q$
the stopping criterion \eqref{eq:TrialPointOptCond} cannot be satisfied, which by the previous lemma implies that this
exchange rule does not satisfy the distance decay condition.
\end{proof}

\begin{remark}
One can readily verify that if in the lemma above one imposes the additional assumption that $z \notin P$, then the
statement of the lemma holds true for $s \ge d$. To prove this result, one simply needs to put $z = 0$, $\ell = d$, and
define $P$ as the convex hull of the first $d$ points from the proof of the lemma above.
\end{remark}

\subsection{The steepest descent exchange rule}
\label{subsect:ExchangeRule}

Let us present a detailed analysis of a particular exchange rule based on the optimality condition from 
Prop.~\ref{prp:NearPoint_OptCond} (or, equivalently, the stopping criterion \eqref{eq:TrialPointOptCond}) and
satisfying the distance decay condition for any point $z$ and any polytope $P$.

Bearing in mind Lemma~\ref{lem:SubpolytopeSize}, we propose to consider the following $(d + 1, 1)$-exchange rule that
on each iteration of Meta-algorithm~\ref{alg:NearestPoint} replaces only one point in the current polytope $P_n$.
Suppose that for some $n \in \mathbb{N}$ the stopping criterion \eqref{eq:TrialPointOptCond} is not satisfied. Then we
define the new point $x_{2n}$ from $P$, that is included into $P_{n + 1}$, as any point from $P$ on which the minimum in
\[
  \min_{i \in I} \langle y_n - z, x_i - y_n \rangle
\]
is attained (cf. a similar rule for including new points in the major cycle of the Wolfe method \cite{Wolfe}). 

To find a point that is removed from $P_n$, note that $z \notin P_n$ due to the definition of $y_n$ and the fact that
condition \eqref{eq:TrialPointOptCond} does not hold true. Therefore, $y_n$ belongs to the boundary of $P_n$, which by
\cite[Lemma~2.8]{Ziegler} implies that $y_n$ is contained in a face of $P_n$ of dimension at most $d - 1$.
Hence by \cite[Prop.~1.15 and 2.3]{Ziegler} the point $y_n$ can be represented as a convex combination of at
most $d$ points from the set $\{ x_i \mid i \in I_n \}$. In other words, there exists $i_{1n} \in I_n$ such that
$y_n \in \co\{ x_i \mid I_n \setminus \{ i_{1n} \} \}$, and it is natural to remove the point $x_{i_{1n}}$
from the polytope $P_n$. 

Thus, we arrive at the following theoretical scheme of a $(d + 1, 1)$-exchange rule that we call 
\textit{the steepest descent exchange rule}:
\begin{itemize}
\item{\textbf{Input:} an index set $I_n \subset I$ with $|I_n| = d + 1$, the point $z$, 
the set $\{ x_1, \ldots, x_{\ell} \}$, and the projection $y_n$ of $z$ onto $P_n = \co\{ x_i \mid i \in I_n \}$.
}

\item{\textbf{Step 1:} Find $i_{1n} \in I_n$ such that $y_n \in \co\{ x_i \mid I_n \setminus \{ i_{1n} \} \}$.}

\item{\textbf{Step 2:} Find $i_{2n} \in I$ such that
\[
  \langle y_n - z, x_{i_{2n}} \rangle = \min_{i \in I} \langle y_n - z, x_i \rangle.
\]
\textbf{Return} $(\{ i_{1n} \}, \{ i_{2n} \})$.}
\end{itemize}

From the discussion above it follows that the steepest descent exchange rule is correctly defined, provided $y_n$ does
not satisfy the stopping criterion \eqref{eq:TrialPointOptCond}. Let us verify that the steepest descent exchange rule
always satisfies the distance decay condition. The proof of this result is almost the same as the proof of the analogous
property for the iterates of the Wolfe method \cite{Wolfe,Chakrabarty}. We include a full proof of this result for the
sake of completeness and due to the fact that Meta-algorithm \ref{alg:NearestPoint} with the steepest descent exchange
rule, strictly speaking, does not coincide with the Wolfe method (see Section~\ref{sect:ComparisonWithWolfe} for more
details).

\begin{theorem} \label{thm:StDescExchRule}
For any point $z \in \mathbb{R}^d$ and any polytope $P = \co\{ x_1, \ldots, x_{\ell} \}$ with $\ell \ge d + 1$ 
the steepest descent exchange rule satisfies the distance decay condition.
\end{theorem}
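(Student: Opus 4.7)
The plan is to exploit that $y_n$ is the projection of $z$ onto $P_n$ while at the same time the stopping criterion fails, in order to produce an explicit descent direction contained in the new subpolytope $P_{n+1}$.

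First, I would verify that the steepest descent exchange rule is well defined at step $n$, i.e.\ that an index $i_{1n}$ as required does exist. Since the stopping criterion \eqref{eq:TrialPointOptCond} fails, in particular $z\notin P_n$ (otherwise $y_n=z$ and \eqref{eq:TrialPointOptCond} would hold trivially). A standard argument using the projection optimality condition on $P_n$ shows that $y_n$ cannot lie in the relative interior of $P_n$ when $z\notin P_n$, hence $y_n$ belongs to the boundary of $P_n$. As already noted in the text preceding the theorem, the results \cite[Lemma~2.8, Prop.~1.15 and 2.3]{Ziegler} then give a representation of $y_n$ as a convex combination of at most $d$ of the points $\{x_i\mid i\in I_n\}$; since $|I_n|=d+1$, at least one index $i_{1n}\in I_n$ can be dropped, yielding $y_n\in\co\{x_i\mid i\in I_n\setminus\{i_{1n}\}\}\subseteq P_{n+1}$.

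Next, I would use the second step of the exchange rule. By failure of \eqref{eq:TrialPointOptCond} there exists some $i\in I$ with $\langle y_n-z,x_i-y_n\rangle<0$, and by the choice of $i_{2n}$ the same strict inequality holds with $i$ replaced by $i_{2n}$:
\[
  \alpha := \langle y_n - z,\, x_{i_{2n}} - y_n \rangle < 0.
\]
Since both $y_n$ and $x_{i_{2n}}$ belong to $P_{n+1}$, the whole segment $y_n+t(x_{i_{2n}}-y_n)$, $t\in[0,1]$, lies in $P_{n+1}$. A direct expansion gives
\[
  \bigl\| y_n + t(x_{i_{2n}} - y_n) - z \bigr\|^2
  = \| y_n - z \|^2 + 2t\alpha + t^2 \| x_{i_{2n}} - y_n \|^2,
\]
and since $\alpha<0$ (in particular $x_{i_{2n}}\ne y_n$), this quadratic in $t$ is strictly less than $\|y_n-z\|^2$ for all sufficiently small $t>0$.

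Putting these together, $\dist(z,P_{n+1})^2\le\|y_n+t(x_{i_{2n}}-y_n)-z\|^2<\|y_n-z\|^2=\dist(z,P_n)^2$ for an appropriate $t\in(0,1]$, which is the distance decay condition \eqref{eq:DistDecayDef}. The only genuinely subtle part of the argument is the first step, namely justifying the existence of $i_{1n}$ and hence the inclusion $y_n\in P_{n+1}$; the rest is a one-line quadratic computation. Everything else (the strict negativity of $\alpha$ and the membership of the descent segment in $P_{n+1}$) follows directly from the definition of the rule.
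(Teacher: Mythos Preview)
Your proposal is correct and follows essentially the same approach as the paper: show $y_n\in P_{n+1}$ (via the existence of $i_{1n}$, which the paper establishes in the discussion preceding the theorem rather than inside the proof), use the failure of the stopping criterion together with the definition of $i_{2n}$ to get $\langle y_n-z,x_{i_{2n}}-y_n\rangle<0$, and then perform the one-variable quadratic expansion along the segment from $y_n$ to $x_{i_{2n}}$ inside $P_{n+1}$. The paper's proof differs only cosmetically, computing $f'(0)$ explicitly rather than reading off the sign of the linear term directly.
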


\begin{proof}
Suppose that for some $n \in \mathbb{N}$ the stopping criterion \eqref{eq:TrialPointOptCond} does not hold true. Denote
by 
\[
  P_n^0 = \co\{ x_i \mid i \in I_n \setminus \{ i_{1n} \} \}
\]
the polytope obtained from $P_n$ after removing the point selected by the steepest descent exchange rule. By
construction $y_n \in P_n^0$. Moreover, due to the definition of the exchange rule and the fact that the stopping
criterion is not satisfied one has 
\begin{equation} \label{eq:GradAtNewPoint}
  \langle y_n - z, x_{i_{2n}} - y_n \rangle < 0.
\end{equation}
Define $x_n(t) = (1 - t) y_n + t x_{i_{2n}}$. Clearly, $x_n(t) \in P_{n + 1}$ for any
$t \in [0, 1]$, since $P_{n + 1} = \co\{ P_n^0, x_{i_{2n}} \}$. Moreover, for any $t \in \mathbb{R}$ one has
\begin{align*}
  f(t) &:= \| x_n(t) - z \|^2 
  \\
  &= (1 - t)^2 \| y_n - z \|^2 + 2 t (1 - t) \langle y_n - z, x_{i_{2n}} - z \rangle + t^2 \| x_{i_{2n}} - z \|^2.
\end{align*}
Note that $f(0) = \| y_n - z \|^2$ and
\[
  f'(0) = - 2 \| y_n - z \|^2 + 2 \langle y_n - z, x_{i_{2n}} - z \rangle 
  = 2\langle y_n - z, x_{i_{2n}} - y_n \rangle < 0
\]
due to \eqref{eq:GradAtNewPoint}. Therefore, for any sufficiently small $t \in (0, 1)$ one has
$f(t) < f(0)$, which implies that
\begin{align*}
  \dist(z, P_{n + 1}) &= \min_{x \in P_{n + 1}} \| z - x \| \le \| z - x_n(t) \| 
  \\
  &= \sqrt{f(t)} < \sqrt{f(0)} = \| z - y_n \| = \dist(z, P_n).
\end{align*}
Thus, the steepest descent exchange rule satisfies the distance decay condition for any point $z \in \mathbb{R}^d$ and
any polytope $P = \co\{ x_1, \ldots, x_{\ell} \}$ with $\ell \ge d + 1$.
\end{proof}

\begin{corollary}
Let $\ell \ge d + 1$. Then Meta-algorithm~\ref{alg:NearestPoint} with $s = d + 1$, $q = 1$, and the steepest descent
exchange rule terminates after a finite number of iterations and returns an optimal solution of the problem
$(\mathcal{P})$.
\end{corollary}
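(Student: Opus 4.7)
The plan is to combine the two results immediately preceding the corollary: Theorem~\ref{thm:StDescExchRule} guarantees that the steepest descent exchange rule satisfies the distance decay condition for every choice of $z$ and $P$ with $\ell\ge d+1$, while Lemma~\ref{lem:DistDecay} says that whenever the exchange rule employed in Meta-algorithm~\ref{alg:NearestPoint} satisfies the distance decay condition, the meta-algorithm terminates after a finite number of iterations and returns a globally optimal solution of $(\mathcal{P})$. Thus the proof reduces to verifying the hypotheses of Lemma~\ref{lem:DistDecay} in the specific setting of the corollary.

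Concretely, first I would note that since $s=d+1\le\ell$, the initialization of the meta-algorithm is well defined: we can choose an index set $I_0\subseteq I$ with $|I_0|=d+1$ and set $P_0=\co\{x_i\mid i\in I_0\}$. Second, I would check that the steepest descent exchange rule is itself well defined on every iteration that does not terminate. This uses the discussion preceding Theorem~\ref{thm:StDescExchRule}: if \eqref{eq:TrialPointOptCond} fails, then $z\notin P_n$, so $y_n=\mathcal{A}(z,P_n)$ lies on the boundary of $P_n$ and, by the dimension arguments cited from \cite{Ziegler}, $y_n$ can be written as a convex combination of at most $d$ points of $\{x_i\mid i\in I_n\}$; in particular, Step~1 of the exchange rule admits a valid choice of $i_{1n}\in I_n$. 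Step~2 is always well defined since $I$ is finite.

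With this in hand, Theorem~\ref{thm:StDescExchRule} applies on each non-terminating iteration and yields $\dist(z,P_{n+1})<\dist(z,P_n)$, which is exactly the distance decay condition of Lemma~\ref{lem:DistDecay}. Invoking that lemma, the meta-algorithm terminates after finitely many iterations, and its output $y_n$ satisfies \eqref{eq:TrialPointOptCond}, hence is a globally optimal solution of $(\mathcal{P})$ by Proposition~\ref{prp:NearPoint_OptCond}.

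There is essentially no obstacle here: the corollary is a direct composition of the lemma and the theorem. The only minor point worth stating explicitly is the well-definedness of the steepest descent rule across all iterations, so that Theorem~\ref{thm:StDescExchRule} can be invoked iteratively rather than just once. Everything else is an immediate citation of earlier results.
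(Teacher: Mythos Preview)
Your proposal is correct and matches the paper's approach: the corollary is stated without proof in the paper, being an immediate consequence of Theorem~\ref{thm:StDescExchRule} combined with Lemma~\ref{lem:DistDecay}. The extra care you take in spelling out well-definedness of the initialization and of the exchange rule on each non-terminating iteration is sound and simply makes explicit what the paper leaves implicit.
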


Let us discuss a possible implementation of the steepest descent exchange rule. Clearly, the challenging part of this
rule consists in finding an index $i_{1n} \in I_n$ such that $y_n \in \co\{ x_i \mid I_n \setminus \{ i_{1n} \} \}$.
This difficulty can be overcome in the following way.

Namely, suppose that instead of returning the projection $u_* = \mathcal{A}(w, Q)$ of a point $w$ onto a polytope 
$Q = \co\{ u_1, \ldots, u_m \}$, the algorithm $\mathcal{A}$ actually returns a vector 
$\alpha = (\alpha^{(1)}, \ldots, \alpha^{(m)}) \in \mathbb{R}^{m}$ such that
\begin{equation} \label{eq:ConvexCombinOutput}
  u_* = \sum_{i = 1}^m \alpha^{(i)} u_i, \quad \sum_{i = 1}^m \alpha^{(i)} = 1, \quad \alpha_i \ge 0
  \quad \forall i \in \{ 1, \ldots, m \}.
\end{equation}
Let us note that for the vast majority of existing methods for finding the nearest point in a polytope this assumption
either holds true by default or can be satisfied by slightly modifying the corresponding method (see
\cite{Wolfe,MDM,BarberoLopez,ZengYangCheng,StetsyukNurminski,Muckeley,SheraliChoi}).

Let $\alpha_n = \mathcal{A}(z, P_n)$ and suppose that $\alpha_n^{(k)} = 0$ for some $k \in I_n$. Then one can obviously
set $i_{1n} = k$ on Step~1 of the steepest descent exchange rule. To simplify the notation, hereinafter we identify 
the vector $\alpha_n = \mathcal{A}(z, P_n) \in \mathbb{R}^{d + 1}$ with the extended vector 
$\widehat{\alpha}_n \in \mathbb{R}^{\ell}$ such that $\widehat{\alpha}_n^{(i)} = \alpha_n^{(i)}$ for any 
$i \in I_n$, and $\widehat{\alpha}_n^{(i)} = 0$ otherwise.

It should be noted that in the case when the points $x_i$, $i \in I_n$, are affinely independent and the stopping
criterion \eqref{eq:TrialPointOptCond} is not satisfied, there always exists $k \in I_n$ such that 
$\alpha_n^{(k)} = 0$. Indeed, as was noted above, in this case $y_n$ does not belong to the interior of $P_n$, which by 
the characterization of interior points of a polytope \cite[Lemma~2.8]{Ziegler} implies that $y_n$ \textit{cannot} be
represented in the form
\[
  y_n = \sum_{i \in I_n} \alpha^{(i)} x_i, \quad \sum_{i \in I_n} \alpha^{(i)} = 1, 
  \quad \alpha^{(i)} > 0 \quad \forall i \in I_n.
\]
Consequently, at least one of the coordinates of the vector $\alpha_n = \mathcal{A}(z, P_n)$ is equal to zero. 

Even when the points $x_i$, $i \in I_n$, are affinely dependent, some methods (such as the the Wolfe method
\cite{Wolfe}) necessarily return a vector $\alpha_n = \mathcal{A}(z, P_n)$ with $\alpha_n^{(k)} = 0$ for at least one 
$k \in I_n$. However, other methods might return a vector $\alpha_n$ such that $\alpha_n^{(i)} > 0$ for all $i \in I_n$.
In this case one can apply the following simple procedure, inspired the the proof of Carath\'{e}odory's theorem, to find
the required index $i_{1n}$. This procedure is described in algorithm below and we call it 
\textit{the index removal method}:
\begin{itemize}
\item{\textbf{Input:} an index set $I_n \subset I$ with $|I_n| = d + 1$, the point $z$, 
the set $\{ x_1, \ldots, x_{\ell} \}$, and $\alpha_n = \mathcal{A}(z, P_n)$.
}

\item{\textbf{Step 1:} Compute $\alpha_{\min} = \min_{i \in I_n} \alpha_n^{(i)}$. If $\alpha_{\min} = 0$,
find $k \in I_n$ such that $\alpha_n^{(k)} = 0$ and \textbf{return} $k$.
}

\item{\textbf{Step 2:} Choose any $j \in I_n$, compute a least-squares solution $\gamma_n$ of the system
\begin{equation} \label{eq:AffDependenceSystem}
  \sum_{i \in I_n \setminus \{ j \}} \gamma^{(i)} (x_i - x_j) = 0, \quad 
  \sum_{i \in I_n \setminus \{ j \}} \gamma^{(i)} = 1,
\end{equation}
and set $\gamma_n^{(j)} = - 1$. Find an index $k \in I_n$ on which the minimum in
\begin{equation} \label{eq:CaratheodoryCoefficient}
  \min\left\{ - \frac{\alpha_n^{(k)}}{\gamma_n^{(k)}} \Biggm| 
  k \in I_n \colon \gamma_n^{(i)} < 0 \right\}
\end{equation}
is attained and \textbf{return} $k$.
}
\end{itemize}

The following proposition proves the correctness of the proposed method.

\begin{proposition} \label{prp:IndexRemovalMethod}
Suppose that for some $n \in \mathbb{N}$ the stopping criterion \eqref{eq:TrialPointOptCond} does not hold true, and
let $k \in I_n$ be the output of the index removal method. 
Then $y_n \in \co\{ x_i \mid i \in I_n \setminus \{ k \} \}$. 
\end{proposition}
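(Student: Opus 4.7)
The plan is to split the analysis into the two branches of the index removal method. If the method returns on Step~1 with $\alpha_{\min} = 0$, the claim is immediate: by property \eqref{eq:ConvexCombinOutput} of the output of $\mathcal{A}$, the vector $\alpha_n$ realizes $y_n$ as a convex combination of $\{x_i \mid i \in I_n\}$, and since $\alpha_n^{(k)} = 0$ for the returned index $k$, that term drops out and $y_n \in \co\{x_i \mid i \in I_n \setminus \{k\}\}$.

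The substantive case is Step~2, which is entered precisely when $\alpha_n^{(i)} > 0$ for every $i \in I_n$. My first move here is to show that the family $\{x_i \mid i \in I_n\}$ must be affinely dependent. Were it affinely independent, these $d+1$ points would form a full-dimensional simplex $P_n$ in which $y_n$ has unique barycentric coordinates, and strict positivity of all coordinates would place $y_n$ in $\interior P_n$; but a projection onto a convex set can lie in the interior only when the source point coincides with the projection, so $y_n = z$, and then \eqref{eq:TrialPointOptCond} holds trivially, contradicting the hypothesis. Consequently the linear system \eqref{eq:AffDependenceSystem} is consistent, so any least-squares solution $\gamma_n$ satisfies it exactly; together with the extension $\gamma_n^{(j)} = -1$, a brief manipulation yields the two identities
\[
  \sum_{i \in I_n} \gamma_n^{(i)} x_i = 0, \qquad \sum_{i \in I_n} \gamma_n^{(i)} = 0.
\]

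The remainder is a standard Carath\'eodory-style perturbation. Setting $\beta^{(i)}(t) := \alpha_n^{(i)} + t \gamma_n^{(i)}$ for $i \in I_n$, the identities above give $\sum_{i \in I_n} \beta^{(i)}(t) x_i = y_n$ and $\sum_{i \in I_n} \beta^{(i)}(t) = 1$ for every $t \in \mathbb{R}$. Because $\gamma_n^{(j)} = -1$, the index set $\{i \in I_n \mid \gamma_n^{(i)} < 0\}$ is nonempty, and since $\alpha_n^{(i)} > 0$ throughout $I_n$, the value
\[
  t_* = \min\left\{ -\frac{\alpha_n^{(i)}}{\gamma_n^{(i)}} \Biggm| i \in I_n \colon \gamma_n^{(i)} < 0 \right\}
\]
is well defined and strictly positive. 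A quick case split on the sign of $\gamma_n^{(i)}$ shows $\beta^{(i)}(t_*) \geq 0$ for every $i \in I_n$, while $\beta^{(k)}(t_*) = 0$ at the minimizing index $k$ returned by the method. Thus $y_n = \sum_{i \in I_n \setminus \{k\}} \beta^{(i)}(t_*) x_i$ is the desired convex combination.

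The step that I expect to demand the most care is establishing the consistency of \eqref{eq:AffDependenceSystem} in Step~2: without it, a least-squares solution need not solve the system and the two identities above would break down. The interior-versus-projection argument that rules out affine independence of $\{x_i \mid i \in I_n\}$ is therefore the load-bearing piece of the proof; everything else is book-keeping.
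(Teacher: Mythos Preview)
Your proof is correct and follows essentially the same route as the paper's: both dispose of the $\alpha_{\min}=0$ case trivially, then argue that $\alpha_{\min}>0$ forces affine dependence of $\{x_i \mid i \in I_n\}$ (the paper refers back to its earlier discussion based on \cite[Lemma~2.8]{Ziegler}, while you spell out the interior-versus-projection argument explicitly), deduce consistency of \eqref{eq:AffDependenceSystem} so that the least-squares solution is an exact solution, and finish with the same Carath\'eodory-type perturbation $\alpha_n + t\gamma_n$. Your identification of the consistency step as the load-bearing one matches the paper's emphasis.
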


\begin{proof}
The validity of the proposition in the case $\alpha_{\min} = 0$ is obvious. Therefore, let us consider the case
$\alpha_{\min} > 0$, that is, the case when the index removal method executes Step~2. As was pointed out above, in this
case the points $x_i$, $i \in I_n$, are necessarily affinely dependent, which by definition implies that the vectors 
$x_i - x_j$, $i \in I_n \setminus \{ j \}$, are linearly dependent. Therefore system of linear equations
\eqref{eq:AffDependenceSystem} is consistent (despite being overdetermined) and its least-squares solution $\gamma_n$ 
satisfies equations \eqref{eq:AffDependenceSystem}.

By definition $\gamma_n \ne 0$ and 
\begin{equation} \label{eq:AffineDependence}
  \sum_{i \in I_n} \gamma_n^{(i)} x_i = 0, \quad \sum_{i \in I_n} \gamma_n^{(i)} = 0.
\end{equation}
Moreover, $\gamma_n^{(j)} = - 1$. Consiquently, the minimum in \eqref{eq:CaratheodoryCoefficient}, which we denote by
$\lambda$, is correctly defined and $\lambda > 0$ (recall that $\alpha_{\min} > 0$).

Observe that
\[
  y_n = \sum_{i \in I_n} \alpha_n^{(i)} x_i 
  = \sum_{i \in I_n} \alpha_n^{(i)} x_i + \lambda \sum_{i \in I_n} \gamma_n^{(i)} x_i
  = \sum_{i \in I_n} \xi_n^{(i)} x_i,
\]
where $\xi_n^{(i)} = \alpha_n^{(i)} + \lambda \gamma_n^{(i)}$, by the first equality in \eqref{eq:AffineDependence}.
By the second equality in \eqref{eq:AffineDependence} and the definition of $\alpha_n$ (see 
\eqref{eq:ConvexCombinOutput}) one has $\sum_{i \in I_n} \xi_n^{(i)} = 1$. Moreover, if $\gamma_n^{(i)} \ge 0$,
then clearly $\xi_n^{(i)} > 0$, while if $\gamma_n^{(i)} < 0$, then $\xi_n^{(i)} \ge 0$ by the definition of
$\lambda$ (see \eqref{eq:CaratheodoryCoefficient}). In addition, $\xi_n^{(k)} = 0$ by the definition of $k$, which
obviously implies that $y_n \in \co\{ x_i \mid i \in I_n \setminus \{ k \} \}$. 
\end{proof}

\begin{remark}
Let us point out that the second equation in \eqref{eq:CaratheodoryCoefficient} can obviously be replaced by 
the equation $\sum_{i \in I_n \setminus \{ j \}} \gamma^{(i)} = C$ for any $C > 0$ (and one also has to set
$\gamma_n^{(j)} = - C$).
\end{remark}

\begin{remark}
It should be noted that in the general case the steepest descent exchange rule does not preserve the affine independence
of the vectors $x_i$, $i \in I_n$, as the following simple example demonstrates. Let $d = 2$, $\ell = 4$, $z = 0$, and
\[
  x_1 = (2, 2), \quad x_2 = (3, 1), \quad x_3 = (1, 1), \quad x_4 = (-1, 1).
\]
Put $I_0 = \{ 1, 2, 3 \}$. Then $y_0 = x_3$, $\alpha_0 = \mathcal{A}(z, P_0) = (0, 0, 1)$, and the stopping criterion
\eqref{eq:TrialPointOptCond} is not satisfied. One can set $i_{10} = 1$, while by definition $i_{20} = 4$. Thus,
$I_1 = \{ 2, 3, 4 \}$. The points $x_i$, $i \in I_1$, are obviously affinely dependent, while the points $x_i$, 
$i \in I_0$, are affinely independent. Thus, the difficulty of finding the required index $i_{1n} \in I_n$ in the case
when $\alpha_n^{(i)} > 0$ for all $i \in I_n$ (i.e. when the points $x_i$, $i \in I_n$, are affinely dependent) cannot
be resolved by simply choosing an initial guess $I_0$ in such a way that the points $x_i$, $i \in I_0$, are affinely
independent. 
\end{remark}

\subsection{A robust version of the meta-algorithm}
\label{subsect:RobustNearestPoint}

It is clear that any algorithm $\mathcal{A}$ for solving the problem $(\mathcal{P})$ can find an optimal solution of
this problem only in theory, while in practice it always returns an approximate solution of this problem due to 
finite precision of computations. Therefore it is an important practical issue to analyse the performance of
Meta-algorithm~\ref{alg:NearestPoint} in the case when only computations with finite precision are possible.

Assume that instead of the ``ideal'' algorithm $\mathcal{A}$ one uses its ``approximate'' version 
$\mathcal{A}_{\varepsilon}$, $\varepsilon > 0$. The algorithm $\mathcal{A}_{\varepsilon}$ returns an approximate, in
some sense (that will be specified below), solution $y_n(\varepsilon)$ of the nearest point problem. Then it is obvious
that the stopping criterion  
\[
  \langle y_n(\varepsilon) - z, x_i - y_n(\varepsilon) \rangle \ge 0 \quad \forall i \in I
\]
of Meta-algorithm~\ref{alg:NearestPoint} cannot be satisfied and must be replaced by the inequality
\[
  \langle y_n(\varepsilon) - z, x_i - y_n(\varepsilon) \rangle \ge - \eta \quad \forall i \in I
\]
for some small $\eta > 0$. The following well-known result (cf.~\cite{MDM}) indicates a direct connection between 
the inequality above and approximate optimality of $y_n(\varepsilon)$. For the sake of completeness, we include a full
proof of this result.

Denote $\diam(P) = \max_{i, j \in I} \| x_i - x_j \|$. One can readily verify that the inequality 
$\| x - y \| \le \diam(P)$ holds true for all $x, y \in P$, which means that $\diam(P)$ is indeed the diameter of the
polytope $P$.

\begin{proposition} \label{prp:ApproximateOptimality}
Let $y \in P$ satisfy the inequality
\begin{equation} \label{eq:SubOptimCond}
  \langle y - z, x_i - y \rangle \ge - \eta \quad \forall i \in I
\end{equation}
for some $\eta > 0$. Then $\| y - x_* \| \le \sqrt{\eta}$, where $x_*$ is an optimal solution of the problem
$(\mathcal{P})$. 

Conversely, let $y \in P$ be such that $\| y - x_* \| \le \varepsilon$ for some $\varepsilon > 0$. Then $y$
satisfies inequality \eqref{eq:SubOptimCond} for any $\eta \ge (\diam(P) + \dist(z, P)) \varepsilon$.
\end{proposition}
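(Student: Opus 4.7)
The plan is to derive both implications from the optimality condition in Prop.~\ref{prp:NearPoint_OptCond} for $x_*$, namely $\langle x_* - z, x_i - x_*\rangle \ge 0$ for all $i \in I$, combined with elementary algebraic rearrangements of $\|y - x_*\|^2$ and $\langle y - z, x_i - y\rangle$ organised around the decomposition $y - z = (y - x_*) + (x_* - z)$.

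For the forward direction, I would first upgrade the hypothesis from the vertices to all of $P$: taking a convex combination of \eqref{eq:SubOptimCond} with nonnegative weights $\beta_i$ realizing $x_* = \sum_i \beta_i x_i$ yields $\langle y - z, x_* - y\rangle \ge -\eta$. Then I would write
\begin{equation*}
\|y - x_*\|^2 = \langle y - x_*, y - z\rangle + \langle y - x_*, z - x_*\rangle = -\langle y - z, x_* - y\rangle - \langle x_* - z, y - x_*\rangle.
\end{equation*}
The first summand is at most $\eta$ by the upgraded hypothesis; the second is nonpositive by applying Prop.~\ref{prp:NearPoint_OptCond} in convex-combination form at $y \in P$. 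Hence $\|y - x_*\|^2 \le \eta$, which is the claimed bound.

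For the converse, setting $e = y - x_*$ with $\|e\| \le \varepsilon$, I would expand
\begin{equation*}
\langle y - z, x_i - y\rangle = \langle (x_* - z) + e,\, (x_i - x_*) - e\rangle = \langle x_* - z, x_i - x_*\rangle - \langle x_* - z, e\rangle + \langle e, x_i - y\rangle,
\end{equation*}
where in the last step I regroup $\langle e, x_i - x_*\rangle - \|e\|^2$ as $\langle e, x_i - x_* - e\rangle = \langle e, x_i - y\rangle$. The first term is nonnegative by Prop.~\ref{prp:NearPoint_OptCond}; Cauchy--Schwarz bounds the second below by $-\dist(z,P)\varepsilon$ and the third below by $-\diam(P)\varepsilon$, since $x_i, y \in P$ implies $\|x_i - y\| \le \diam(P)$. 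Adding these gives $\langle y - z, x_i - y\rangle \ge -(\diam(P) + \dist(z,P))\varepsilon$, whence the conclusion for any $\eta \ge (\diam(P) + \dist(z,P))\varepsilon$.

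The only (mild) obstacle is the bookkeeping in the converse: a naive termwise expansion leaves a spurious $\|e\|^2 = \varepsilon^2$ that would weaken the bound to $\eta \ge (\diam(P) + \dist(z,P))\varepsilon + \varepsilon^2$. The key is to absorb $\|e\|^2$ into $\langle e, x_i - x_*\rangle$ to form $\langle e, x_i - y\rangle$ \emph{before} applying Cauchy--Schwarz, so that a single diameter bound at the point $y \in P$ covers both contributions cleanly.
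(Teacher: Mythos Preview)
Your proof is correct and essentially coincides with the paper's: both directions use the decomposition $y - z = (y - x_*) + (x_* - z)$ together with Prop.~\ref{prp:NearPoint_OptCond}, and your three-term expansion in the converse (after absorbing $\|e\|^2$ into $\langle e, x_i - y\rangle$) is exactly the paper's add-and-subtract-$x_*$ identity. The only cosmetic difference is that in the forward direction you upgrade \eqref{eq:SubOptimCond} to $x_*$ before decomposing, whereas the paper decomposes first and applies the convex combination at the end.
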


\begin{proof}
Let a point $y \in P$ satisfy inequality \eqref{eq:SubOptimCond} for some $\eta > 0$. Adding and subtracting $z$ one
gets
\begin{align*}
  \| y - x_* \|^2 &= \| y - z \|^2 - 2 \langle y - z, x_* - z \rangle + \| x_* - z \|^2
  \\
  &= \Big( \| y - z \|^2 - \langle y - z, x_* - z \rangle \Big) 
  + \Big( \| x_* - z \|^2 - \langle y - z, x_* - z \rangle \Big)
  \\
  &= \langle y - z, y - x_* \rangle - \langle x_* - z, y - x_* \rangle. 
\end{align*}
Hence applying Prop.~\ref{prp:NearPoint_OptCond} one obtains
\[
  \| y - x_* \|^2 \le \langle y - z, y - x_* \rangle.
\] 
Since $x_* \in P$, there exist $\alpha_i \ge 0$, $i \in I$, such that $x_* = \sum_{i \in I} \alpha_i x_i$ and
$\sum_{i \in I} \alpha_i = 1$. Therefore, with the use of \eqref{eq:SubOptimCond} one finally gets that
\[
  \| y - x_* \|^2 \le \sum_{i = 1}^{\ell} \alpha_i \langle y - z, y - x_i \rangle
  \le \sum_{i = 1}^{\ell} \alpha_i \eta = \eta,
\]
which implies that $\| y - x_* \| \le \sqrt{\eta}$.

Suppose now that $\| y - x_* \| \le \varepsilon$ for some $\varepsilon > 0$. Adding and subtracting $x_*$ one has
\begin{align*}
  \langle y - z, x_i - y \rangle &= \langle x_* - z, x_i - y \rangle + \langle y - x_*, x_i - y \rangle
  \\
  &= \langle x_* - z, x_i - x_* \rangle + \langle x_* - z, x_* - y \rangle + \langle y - x_*, x_i - y \rangle.
\end{align*}
Hence with the use of Prop.~\ref{prp:NearPoint_OptCond} and the definition of $x_*$ one gets that
\begin{align*}
  \langle y - z, x_i - y \rangle &\ge - \Big( \| x_* - z \| + \| x_i - y \| \Big) \| x_* - y \|
  \\
  &\ge - \Big( \dist(z, P) + \diam(P) \Big) \varepsilon,
\end{align*}
which implies the required result.
\end{proof}

Although a robust version of Meta-algorithm~\ref{alg:NearestPoint} can be formulated for an arbitrary exchange rule,
for the sake of brevity we will formulate it only in the case of the steepest descent exchange rule. To this end, we
suppose that the output of the approximate algorithm $\mathcal{A}_{\varepsilon}(z, P_n)$ is not an approximate solution
$y_n(\varepsilon)$ of the nearest point problem $\min_{x \in P_n} \| x - z \|$, but a vector $\alpha_n(\varepsilon)$ of
coefficients of the corresponding convex combination, that is,
\[
  y_n(\varepsilon) = \sum_{i \in I_n} \alpha_n^{(i)}(\varepsilon) x_i, \quad
  \sum_{i \in I_n} \alpha_n^{(i)}(\varepsilon) = 1, \quad 
  \alpha_n^{(i)}(\varepsilon) \ge 0 \quad \forall i \in I_n.
\]
Due to finite precision of computations, even in the case when the vectors $x_i$, $i \in I_n$, are affinely
independent, all coefficients $\alpha_n^{(i)}(\varepsilon)$, $i \in I_n$, might be nonzero. Therefore we propose to use 
the following heuristic rule for choosing a point $x_{i_{1n}}$, $i_{1n} \in I_n$, that is removed from the polytope
$P_n$ by the steepest descent exchange rule. Namely, we choose as $i_{1n}$ any index from $I_n$ on which the minimum in
$\min_{i \in I_n} \alpha_n^{(i)}(\varepsilon)$ is attained and add a safeguard based on the index removal method,
discussed above, to ensure the validity of a certain approximate distance decay condition. Note, however, that for some
methods (such as the Wolfe method \cite{Wolfe}) there always exists $i \in I_n$ such that 
$\alpha_n^{(i)}(\varepsilon) = 0$, even when the computations are performed with finite precision. For such methods 
the safeguard based on the index removal method is completely unnecessary.

\begin{algorithm}[htbp]  \label{alg:NearestPointRobust}
\caption{Robust meta-algorithm for finding the nearest point in a polytope.}

\noindent\textbf{Input:} {a point $z \in \mathbb{R}^d$, a collection of points 
$\{ x_1, \ldots, x_{\ell} \} \subset \mathbb{R}^d$, an algorithm $\mathcal{A}_{\varepsilon}$, $\varepsilon > 0$, for
solving the nearest point problem, and $\eta > 0$.}

\noindent\textbf{Step 0:} {Put $n = 0$, choose an index set $I_n \subseteq I$ with $|I_n| = d + 1$, and define 
$P_n = \co\{ x_i \mid i \in I_n \}$. Compute $\alpha_n(\varepsilon) = \mathcal{A}_{\varepsilon}(z, P_n)$,
$y_n(\varepsilon) = \sum_{i \in I_n} \alpha_n^{(i)}(\varepsilon) x_i$, and $\theta_n = \| y_n(\varepsilon) - z \|$.
}

\noindent{\textbf{Step 1:} If $y_n(\varepsilon)$ satisfies the condition
\begin{equation} \label{eq:TrialPointApproxOptCond}
  \langle y_n(\varepsilon) - z, x_i - y_n(\varepsilon) \rangle \ge -\eta \quad \forall i \in I,
\end{equation}
\textbf{return} $y_n(\varepsilon)$. Otherwise, find $i_{1n} \in I_n$ and $i_{2n} \in I$ such that
\[
  \alpha_n^{(i_{1n})}(\varepsilon) = \min_{i \in I_n} \alpha_n^{(i)}(\varepsilon), \quad
  \langle y_n(\varepsilon) - z, x_{i_{2n}} \rangle = 
  \min_{i \in I} \langle y_n(\varepsilon) - z, x_i \rangle.
\]
Put 
\[
  I_{n + 1} = \Big( I_n \setminus \{ i_{1n} \} \Big) \cup \{ i_{2n} \}, \quad
  P_{n + 1} = \co\Big\{ x_i \Bigm| i \in I_{n + 1} \Big\}.
\]
}

\noindent\textbf{Step 2:} {Compute $\alpha_{n + 1}(\varepsilon) = \mathcal{A}_{\varepsilon}(z, P_{n + 1})$,
$y_{n + 1}(\varepsilon) = \sum_{i \in I_{n + 1}} \alpha_{n + 1}^{(i)}(\varepsilon) x_i$, and 
$\theta_{n + 1} = \| y_{n + 1}(\varepsilon) - z \|$. If $\theta_{n + 1} < \theta_n$, set $n = n + 1$ and go to
\textbf{Step 1}. Otherwise, go to \textbf{Step 3}.
}

\noindent{\textbf{Step 3:} Find an approximate solution $\beta_n$ of the problem
\begin{equation} \label{eq:AuxProjectionProblem}
  \min_{\beta} \Big\| \sum_{i \in I_n} \beta^{(i)} x_i - z \Big \|^2 
  \quad \text{subject to} \quad \sum_{i \in I_n} \beta^{(i)} = 1.
\end{equation}
Compute $h_n = \sum_{i \in I_n} \beta_n^{(i)} x_i$ and $\beta_{\min} = \min_{i \in I_n} \beta_n^{(i)}$. 
If $\beta_{\min} < 0$, find
\begin{equation} \label{eq:WolfeCoef}
  \lambda = \min\left\{ \frac{\alpha_n^{(i)}}{\alpha_n^{(i)} - \beta_n^{(i)}} \Biggm| 
  i \in I_n \colon \beta_n^{(i)} < 0 \right\},
\end{equation}
define $\alpha_n(\varepsilon) = (1 - \lambda) \alpha_n(\varepsilon) + \lambda \beta_n$, 
$y_n(\varepsilon) = (1 - \lambda) y_n(\varepsilon) + \lambda h_n$, and $\theta_n = \| y_n(\varepsilon) - z \|$,
and go to \textbf{Step~1}. If $\beta_{\min} = 0$, define $\alpha_n(\varepsilon) = \beta_n$, $y_n(\varepsilon) = h_n$,
$\theta_n = \| h_n - z \|$, and go to \textbf{Step~1}. If $\beta_{\min} > 0$, go to \textbf{Step 4}.
}

\noindent{\textbf{Step 4:} Choose any $j \in I_n$, find an approximate least-squares solution $\gamma_n$ of
the system
\[
  \sum_{i \in I_n \setminus \{ j \}} \gamma^{(i)} (x_i - x_j) = 0, \quad 
  \sum_{i \in I_n \setminus \{ j \}} \gamma^{(i)} = 1,
\]
and set $\gamma_n^{(j)} = - \sum_{i \in I_n \setminus \{ j \}} \gamma_n^{(i)}$. Compute
\[
  \lambda = \min\left\{ - \frac{ \alpha_n^{(i)}}{\gamma_n^{(i)}} \Biggm| i \in I_n \colon \gamma_n^{(i)} < 0 \right\}.
\]
Define $\alpha_n(\varepsilon) = \alpha_n(\varepsilon) + \lambda \gamma_n$,
$y_n(\varepsilon) = \sum_{i \in I_n} \alpha_n(\varepsilon) x_i$, $\theta_n = \| y_n(\varepsilon) - z \|$,
and go to \textbf{Step 1}.
}
\end{algorithm}

Thus, we arrive at the following robust version of Meta-algorithm~\ref{alg:NearestPoint} given in
Meta-algorithm~\ref{alg:NearestPointRobust}. This meta-algorithm checks the approximate distance decay condition
\[
  \| y_{n + 1}(\varepsilon) - z \| < \| y_n(\varepsilon) - z \|
\]
to verify the correctness of the index exchange. If the condition fails, one needs to rectify the choice of the index
$i_{1n}$ on the previous step (that is, a wrong point was removed from $P_n$ and one must remove a different point).

To correct the choice of $i_{1n}$, the meta-algorithm first computes the projection of $z$ onto the affine hull 
$\aff\{ x_i \mid i \in I_n \}$ of the points $x_i$, $i \in I_n$, on Step 3 (see problem
\eqref{eq:AuxProjectionProblem}). Let us note that problem \eqref{eq:AuxProjectionProblem} can be reduced to the problem
of solving a system of linear equations (see \cite{Wolfe}).

As will be shown below, if the points $x_i$, $i \in I_n$, are affinely independent or the projection of $z$ onto their
affine hull does not coincide with the projection of $z$ onto their convex hull, Step 3 makes a necessary correction
of the point $y_n(\varepsilon)$ (more precisely, the coefficients $\alpha_n(\varepsilon)$ of the corresponding convex
combination) to ensure that the new choice of $i_{1n}$ on Step~1 leads to the validity of the approximate distance decay
condition. Otherwise, the meta-algorithm moves to Step 4 and employs essentially the same technique as in the index
removal method to correct the coefficients $\alpha_n(\varepsilon)$ and find the required index $i_{1n}$. 

Let us analyze Meta-algorithm~\ref{alg:NearestPointRobust}. First, we show that if
$\min_{i \in I_n} \alpha_n^{(i)}(\varepsilon) = 0$ for some $n \in \mathbb{N}$ on Step~1 of this meta-algorithm, then
under some natural assumptions no corrections of the coefficients $\alpha_n(\varepsilon)$ are needed, the method
does not execute Steps~3 and 4, and moves to the next (i.e. $(n + 1)$th) iteration.

As in the previous section, for any $n \in \mathbb{N}$ denote by $y_n$ the actual projection of $z$ onto $P_n$, that
is, an optimal solution of the problem $\min_{y \in P_n} \| y - z \|$

\begin{lemma} \label{lem:ApproxDistanceDecay}
Suppose that
\begin{multline} \label{eq:ParametersConsistency}
  \max\Big\{ 2(\diam(P) + \dist(z, P)) \varepsilon, \\
  \diam(P) \sqrt{\max\{ 0, 2 \varepsilon \theta_0 - \varepsilon^2\}} \Big\} < \eta
  \le \diam(P)^2
\end{multline}
and the algorithm $\mathcal{A}_{\varepsilon}$ with  $\varepsilon \ge 0$ satisfies the following \textbf{approximate
optimality condition}: for any point $w \in \mathbb{R}^d$ and any polytope 
$Q = \co\{ u_1, \ldots, u_m \} \subset \mathbb{R}^d$ one has 
\[
  \Big\| \sum_{i = 1}^m \alpha^{(i)} u_i - u_* \Big\| < \varepsilon, \quad
  \sum_{i = 1}^m \alpha^{(i)} = 1, \quad \alpha^{(i)} \ge 0 \quad \forall i \in \{ 1, \ldots, m \},
\]
where $\alpha = \mathcal{A}_{\varepsilon}(w, Q)$ and $u_*$ is an optimal solution of the nearest point problem 
$\min_{u \in Q} \| u - w \|$. Let also for some $n \in \mathbb{N}$ one has $\theta_{k + 1} < \theta_k$ for any 
$k \in \{ 0, 1, \ldots, n - 1 \}$ and
\begin{equation} \label{eq:MinZeroConvexCombin}
  \min_{i \in I_n} \alpha_n^{(i)}(\varepsilon) = 0
\end{equation}
on Step~1 of Meta-algorithm~\ref{alg:NearestPointRobust}. Then for $\theta_{n + 1}$ computed on Step~2 of
Meta-algorithm~\ref{alg:NearestPointRobust} one has $\theta_{n + 1} < \theta_n$.
\end{lemma}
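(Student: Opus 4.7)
The plan is to exploit the fact that $\min_{i \in I_n}\alpha_n^{(i)}(\varepsilon)=0$ forces $y_n(\varepsilon)$ to already lie inside $P_{n+1}$, and then to borrow the one-step descent argument from the proof of Theorem~\ref{thm:StDescExchRule} to produce an explicit point on the segment $[y_n(\varepsilon),x_{i_{2n}}]$ whose distance to $z$ is strictly smaller than $\theta_n$ by a quantitatively controlled margin. The approximate optimality condition on $\mathcal{A}_\varepsilon$ will then lift this bound from the true projection onto $P_{n+1}$ up to the computed iterate $y_{n+1}(\varepsilon)$.

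Concretely, I first pick $i_{1n}\in I_n$ with $\alpha_n^{(i_{1n})}(\varepsilon)=0$; since the $\alpha_n^{(i)}(\varepsilon)$ are coefficients of a convex combination, $y_n(\varepsilon)\in\co\{x_i\mid i\in I_n\setminus\{i_{1n}\}\}\subset P_{n+1}$. Because the meta-algorithm proceeds past Step~1 to Step~2 at iteration $n$, the approximate stopping criterion \eqref{eq:TrialPointApproxOptCond} fails, and the choice of $i_{2n}$ on Step~1 yields $A:=\langle y_n(\varepsilon)-z,x_{i_{2n}}-y_n(\varepsilon)\rangle<-\eta$. Cauchy--Schwarz together with $\|x_{i_{2n}}-y_n(\varepsilon)\|\le\diam(P)$ then gives $\theta_n\ge\eta/\diam(P)$, and the first part of \eqref{eq:ParametersConsistency} implies $\eta/\diam(P)>2\varepsilon$, so in particular $\theta_n>\varepsilon$. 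This strict positivity is the crucial ingredient that will allow the error $\varepsilon$ coming from the inexact solver to be absorbed into the descent.

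The second step is the direct analogue of the computation in Theorem~\ref{thm:StDescExchRule}. I parameterize $x_n(t)=(1-t)y_n(\varepsilon)+tx_{i_{2n}}\in P_{n+1}$ and expand
\[
f(t):=\|x_n(t)-z\|^2=\theta_n^2+2tA+t^2B,
\]
with $B:=\|x_{i_{2n}}-y_n(\varepsilon)\|^2\le\diam(P)^2$. Setting $t_0=\eta/\diam(P)^2$, which lies in $(0,1]$ thanks to $\eta\le\diam(P)^2$ from \eqref{eq:ParametersConsistency}, a direct computation gives $f(t_0)\le\theta_n^2-\eta^2/\diam(P)^2$. Since $x_n(t_0)\in P_{n+1}$, the approximate optimality condition applied to $\mathcal{A}_\varepsilon(z,P_{n+1})$ delivers $\|y_{n+1}(\varepsilon)-y_{n+1}^*\|<\varepsilon$ for the true projection $y_{n+1}^*$ of $z$ onto $P_{n+1}$, whence by the triangle inequality
\[
\theta_{n+1}<\varepsilon+\|y_{n+1}^*-z\|\le\varepsilon+\sqrt{f(t_0)}\le\varepsilon+\sqrt{\theta_n^2-\eta^2/\diam(P)^2}.
\]

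It remains to show the right-hand side is strictly smaller than $\theta_n$. Using $\theta_n>\varepsilon$, squaring reduces this to $\eta^2/\diam(P)^2>2\varepsilon\theta_n-\varepsilon^2$, which follows from the hypothesis $\theta_{k+1}<\theta_k$ for $k=0,\ldots,n-1$ (so $\theta_n\le\theta_0$) together with the lower bound on $\eta$ supplied by \eqref{eq:ParametersConsistency}. The main obstacle, and what pins down the precise form of \eqref{eq:ParametersConsistency}, is exactly this last balancing act: the geometric descent $\eta^2/\diam(P)^2$ produced by one steepest-descent shift must dominate the error budget $2\varepsilon\theta_n-\varepsilon^2$ accumulated from the inexact solver, uniformly in $n$, which is precisely the reason the stated bound is expressed in terms of $\theta_0$ rather than the running value $\theta_n$.
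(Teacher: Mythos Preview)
Your proof is correct and follows essentially the same line as the paper's: both arguments use $\alpha_n^{(i_{1n})}(\varepsilon)=0$ to place $y_n(\varepsilon)$ inside $P_{n+1}$, parametrize the segment toward $x_{i_{2n}}$, bound the quadratic $f(t)$ to obtain $\dist(z,P_{n+1})^2\le\theta_n^2-\eta^2/\diam(P)^2$, and then absorb the solver error $\varepsilon$ via the approximate optimality condition and the inequality $\eta^2/\diam(P)^2>2\varepsilon\theta_0-\varepsilon^2$.

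The one noteworthy difference is how you secure $\theta_n>\varepsilon$. The paper devotes a separate ``Part~1'' to showing $\theta_k\ge\varepsilon$ for \emph{all} $k\le n$ by contradiction, invoking Proposition~\ref{prp:ApproximateOptimality}: if $\theta_k<\varepsilon$ then $\|y_k(\varepsilon)-x_*\|<2\varepsilon$, which would force the stopping criterion \eqref{eq:TrialPointApproxOptCond} to hold. You instead obtain $\theta_n>\eta/\diam(P)>2\varepsilon$ directly from Cauchy--Schwarz applied to $\langle y_n(\varepsilon)-z,x_{i_{2n}}-y_n(\varepsilon)\rangle<-\eta$, together with the first term of \eqref{eq:ParametersConsistency}. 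Your route is shorter and self-contained (it avoids Proposition~\ref{prp:ApproximateOptimality} entirely), at the cost of only yielding the bound at the single index $n$ rather than for all earlier iterates---which, as you correctly observe, is all that is needed here.
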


\begin{proof}
Let us divide the proof into two parts. First, we show that $\theta_k \ge \varepsilon$ for any 
$k \in \{ 0, \ldots, n \}$ and then use this result to prove the statement of the lemma.

\textbf{Part~1}. Suppose by contradiction that $\theta_k < \varepsilon$ for some $k \in \{ 0, 1, \ldots, n \}$, that
is, $\| y_k(\varepsilon) - z \| < \varepsilon$. Let $x_*$ be an optimal solution of the problem $(\mathcal{P})$. Then by
definition $\| x_* - z \| \le \| y_k(\varepsilon) - z \| < \varepsilon$, which yields 
$\| x_* - y_k(\varepsilon) \| < 2 \varepsilon$. Hence by Proposition~\ref{prp:ApproximateOptimality} one has
\[
  \langle y_k(\varepsilon) - z, x_i - y_k(\varepsilon) \rangle 
  \ge - 2(\diam(P) + \dist(z, P)) \varepsilon
\]
Therefore by the first inequality in \eqref{eq:ParametersConsistency} the point $y_k(\varepsilon)$ satisfies the
stopping criterion \eqref{eq:TrialPointApproxOptCond}, which contradicts our assumption that the meta-algorithm
computes $i_{1n}$ on Step~1 of the $n$th iteration for $n \ge k$.

\textbf{Part~2}. By our assumption 
$\alpha_n^{(i_{1n})}(\varepsilon) = \min_{i \in I_n} \alpha_n^{(i)}(\varepsilon) = 0$. Therefore 
\[
  y_n(\varepsilon) = \sum_{i \in I_n} \alpha_n^{(i)}(\varepsilon) x_i \in 
  \co\Big\{ x_i \Bigm| i \in I_n \setminus \{ i_{1n} \} \Big\},
\]
which yields $y_n(\varepsilon) \in P_{n + 1}$, where the set $P_{n + 1}$ is defined on Step~1.

By the definition of $i_{2n}$ (see Step~1 of the meta-algorithm) one has
\begin{equation} \label{eq:NotApproxOptimal}
  \langle y_n(\varepsilon) - z, x_{i_{2n}} - y_n(\varepsilon) \rangle 
  = \min_{i \in I} \langle y_n(\varepsilon) - z, x_i - y_n(\varepsilon) \rangle \le - \eta.
\end{equation}
Define $x_n(t) = (1 - t) y_n(\varepsilon) + t x_{i_{2n}}$. Clearly, $x_n(t) \in P_{n + 1}$ for all $t \in [0, 1]$,
since, as was noted above, $y_n(\varepsilon) \in P_{n + 1}$ and $x_{i_{2n}} \in P_{n + 1}$ by the definition of 
$P_{n + 1}$ (see Step~2 of the meta-algorithm). 

For any $t \in \mathbb{R}$ one has
\begin{align*}
  f(t) &:= \| x_n(t) - z \|^2 
  = \Big\| (y_n(\varepsilon) - z) + t (x_{i_{2n}} - y_n(\varepsilon)) \Big\|^2
  \\
  &= \| y_n(\varepsilon) - z \|^2 + 2 t \langle y_n(\varepsilon) - z, x_{i_{2n}} - y_n(\varepsilon) \rangle
  + t^2 \| x_{i_{2n}} - y_n(\varepsilon) \|^2.
\end{align*}
Hence with the use of \eqref{eq:NotApproxOptimal} one obtains
\[
  f(t) \le \theta_n^2 - 2 \eta t + \diam(P)^2 t^2 \quad \forall \alpha \ge 0.
\]
The minimum in $t$ of the right-hand side of this inequality is attained at $t_* = \eta / \diam(P)^2$. From 
the second inequality in \eqref{eq:ParametersConsistency} it follows that $t_* \in (0, 1]$. Therefore, the point
$x_n(t_*)$ belongs to $P_{n + 1}$ and
\[
  \min_{t \in [0, 1]} f(t) = f(t_*) 
  = \| x_n(t_*) - z \|^2 \le \theta_n^2 - \frac{\eta^2}{\diam(P)^2}.
\]
Applying the definition of $y_{n + 1}$, the first inequality in \eqref{eq:ParametersConsistency}, and the fact that
$\theta_0 \ge \varepsilon$ one gets
\[
  \| y_{n + 1} - z \|^2 = \min_{y \in P_{n + 1}} \| y - z \|
  \le \| x_n(t_*) - z \|^2 < \theta_n^2 - 2 \theta_0 \varepsilon + \varepsilon^2.
\]
Hence $\| y_{n + 1} - z \|^2 < (\theta_n - \varepsilon)^2$, thanks to the fact that
$\varepsilon \le \theta_n < \theta_{n - 1} < \ldots < \theta_0$ by our assumption and the first part of the proof.
Consequently, by the approximate optimality condition on $\mathcal{A}_{\varepsilon}$, for $y_{n + 1}(\varepsilon)$
computed on Step~2 one has
\[
  \theta_{n + 1} = \| y_{n + 1}(\varepsilon) - z \| \le \| y_{n + 1} - z \| + \varepsilon
  < \theta_n - \varepsilon + \varepsilon = \theta_n.
\]
Thus, the approximate distance decay holds true, the meta-algorithm increments $n$ on Step~2 and does not execute
Steps~3 and 4.
\end{proof}

\begin{remark}
Let us underline that the lemma above holds true regardless of whether $\theta_0, \theta_1, \ldots, \theta_n$, and
$\alpha_n(\varepsilon)$ were computed on Step 2, 3 or 4. In particular, it holds true even if the equality
\eqref{eq:MinZeroConvexCombin} is satisfied for $\alpha_n(\varepsilon)$ that was computed on Steps 3 or 4 and
not directly computed by the algorithm $\mathcal{A}_{\varepsilon}$.
\end{remark}

\begin{remark}
Note that the value $\theta_0$ that is a priori unknown is used in inequalities \eqref{eq:ParametersConsistency} on
parameters of Meta-algorithm~\ref{alg:NearestPointRobust}. However, we can easily estimate it from above.
If $\theta_0$ is computed on Step~0, then by the approximate optimality condition 
$\theta_0 \le \dist(z, P_0) + \varepsilon$. In turn, if $\theta_0$ is computed on Steps~3 or 4, then under some natural
assumptions one can show that $\theta_0 \le \dist(z, P_0) + 2 \varepsilon$ (see the proof of
Theorem~\ref{thm:RobustMetaAlgCorrectness} below).
\end{remark}

The previous lemma allows one to immediately prove correctness and finite termination of
Meta-algorithm~\ref{alg:NearestPointRobust} in the case when the algorithm $\mathcal{A}_{\varepsilon}$ always returns a
vector $\alpha_n(\varepsilon)$ having at least one zero component, regardless of whether the points $x_i$, $i \in I_n$,
are affinely independent or not. Recall that this assumption is satisfied for the Wolfe method \cite{Wolfe}.

\begin{theorem} \label{thrm:RobustMetaAlg_Wolfe}
Let $\ell \ge d + 1$, inequalities \eqref{eq:ParametersConsistency} hold true, and the algorithm
$\mathcal{A}_{\varepsilon}$ with $\varepsilon > 0$ satisfy the approximate optimality condition from
Lemma~\ref{lem:ApproxDistanceDecay}. Suppose also that for any point $w \in \mathbb{R}^d$ and any polytope 
$Q = \co\{ u_1, \ldots, u_m \} \subset \mathbb{R}^d$ with $m \ge d + 1$ there
exists $i \in \{ 1, \ldots, m \}$ such that for $\alpha = \mathcal{A}_{\varepsilon}(w, Q)$ one has $\alpha^{(i)} = 0$.
Then Meta-algorithm~\ref{alg:NearestPointRobust} is correctly defined, never executes Steps 3 and 4, terminates after 
a finite number of iterations, and returns a point $y_n(\varepsilon)$ such that 
$\| y_n(\varepsilon) - x_* \| < \sqrt{\eta}$, where $x_*$ is an optimal solution of the problem $(\mathcal{P})$.
\end{theorem}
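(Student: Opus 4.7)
My plan is three-fold: first verify inductively that the meta-algorithm stays on its ``fast track'' (Steps 1 and 2 only), then extract finite termination from determinism together with the strict monotonicity provided by Lemma~\ref{lem:ApproxDistanceDecay}, and finally invoke Proposition~\ref{prp:ApproximateOptimality} for the error bound.

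First I would prove by induction on $n$ the following invariant: $|I_n|=d+1$, and on Step~1 of iteration $n$ the vector $\alpha_n(\varepsilon)$ has at least one zero component, so $i_{1n}$ can be taken with $\alpha_n^{(i_{1n})}(\varepsilon)=0$. The base case is immediate from the hypothesis on $\mathcal{A}_\varepsilon$ applied to $P_0$. For the inductive step, the key observation is that failure of the stopping criterion \eqref{eq:TrialPointApproxOptCond} combined with the first inequality in \eqref{eq:ParametersConsistency} forces $i_{2n}\notin I_n$: applying the second half of Proposition~\ref{prp:ApproximateOptimality} inside $P_n$ to the $\varepsilon$-approximation $y_n(\varepsilon)$ of the exact projection of $z$ onto $P_n$ yields $\langle y_n(\varepsilon)-z,x_i-y_n(\varepsilon)\rangle\ge-(\diam(P)+\dist(z,P))\varepsilon>-\eta$ for every $i\in I_n$, so the minimizer $i_{2n}$ necessarily lies in $I\setminus I_n$. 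Hence $|I_{n+1}|=d+1$, and the hypothesis on $\mathcal{A}_\varepsilon$ again yields a zero component in $\alpha_{n+1}(\varepsilon)$, closing the induction.

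With this invariant in hand, Lemma~\ref{lem:ApproxDistanceDecay} applies verbatim at every iteration (its assumption \eqref{eq:MinZeroConvexCombin} being supplied by the invariant), so $\theta_{n+1}<\theta_n$ on Step~2 and the algorithm never enters Steps~3 or 4. For finite termination I would invoke the fact, highlighted in the remark after the definition of $\mathcal{A}$, that algorithms are treated as deterministic single-valued maps. Since $z$ is fixed and $P_n$ is determined by $I_n$, the triple $(\alpha_n(\varepsilon),y_n(\varepsilon),\theta_n)$ is a function of $I_n$. Strict monotonicity $\theta_0>\theta_1>\cdots$ thus forces the index sets $I_0,I_1,\ldots$ to be pairwise distinct, and since only $\binom{\ell}{d+1}$ such sets exist, the algorithm must terminate after at most $\binom{\ell}{d+1}$ iterations. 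At termination, $y_n(\varepsilon)\in P$ satisfies \eqref{eq:TrialPointApproxOptCond}, and Proposition~\ref{prp:ApproximateOptimality} gives $\|y_n(\varepsilon)-x_*\|\le\sqrt{\eta}$.

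The main obstacle, in my view, is the inductive bookkeeping step establishing $i_{2n}\notin I_n$: it uses Proposition~\ref{prp:ApproximateOptimality} \emph{inside} the subpolytope $P_n$ rather than inside $P$, which is legitimate only because the approximate optimality condition on $\mathcal{A}_\varepsilon$ is postulated uniformly over all polytopes $Q$. Once this point is verified, the remaining pieces — cardinality preservation, strict decrease via Lemma~\ref{lem:ApproxDistanceDecay}, and the pigeonhole termination via determinism — fit together without further difficulty.
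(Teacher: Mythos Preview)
Your overall strategy matches the paper's: induct, apply Lemma~\ref{lem:ApproxDistanceDecay} to get the strict decay $\theta_{n+1}<\theta_n$, infer that the index sets $I_n$ are pairwise distinct, conclude finite termination by pigeonhole, and bound the error via Proposition~\ref{prp:ApproximateOptimality}. You are in fact more careful than the paper on one point: the paper simply asserts that ``each $P_i$ is the convex hull of $d+1$ points,'' whereas you justify $|I_{n+1}|=d+1$ by proving $i_{2n}\notin I_n$ (which is needed so that the theorem's zero-coefficient hypothesis, stated only for $m\ge d+1$, continues to apply at the next step).

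That said, your argument for $i_{2n}\notin I_n$ contains a slip. Proposition~\ref{prp:ApproximateOptimality} applied \emph{inside} $P_n$ yields
\[
  \langle y_n(\varepsilon)-z,\,x_i-y_n(\varepsilon)\rangle
  \ge -\bigl(\diam(P_n)+\dist(z,P_n)\bigr)\varepsilon
  \quad \text{for all } i\in I_n,
\]
and while $\diam(P_n)\le\diam(P)$, one has $\dist(z,P_n)\ge\dist(z,P)$ because $P_n\subseteq P$, so replacing $\dist(z,P_n)$ by $\dist(z,P)$ goes the wrong way. The repair is easy: since $y_n(\varepsilon)\in P_n$ and $y_0(\varepsilon)\in P$,
\[
  \dist(z,P_n)\le\theta_n\le\theta_0=\|y_0(\varepsilon)-z\|\le\diam(P)+\dist(z,P),
\]
whence
\[
  \bigl(\diam(P_n)+\dist(z,P_n)\bigr)\varepsilon
  \le\bigl(2\diam(P)+\dist(z,P)\bigr)\varepsilon
  <2\bigl(\diam(P)+\dist(z,P)\bigr)\varepsilon<\eta
\]
by the first inequality in \eqref{eq:ParametersConsistency} (this is where the factor of~$2$ is used). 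With this correction your argument is complete and in fact fills in a detail the paper leaves implicit.
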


\begin{proof}
From the assumptions of the theorem and Lemma~\ref{lem:ApproxDistanceDecay} it follows that $\theta_1 < \theta_0$ and
the meta-algorithm does not execute Steps~3 and 4 for $n = 0$, provided the stopping criterion
\eqref{eq:TrialPointApproxOptCond} is not satisfied for $y_0(\varepsilon)$ (otherwise, the method terminates when 
$n = 0$ and does not execute Step~2). 

Now, arguing by induction and applying Lemma~\ref{lem:ApproxDistanceDecay} one can check that 
$\theta_n < \theta_{n - 1}$ and the meta-algorithm does not execute Steps~3 and 4 for any $n \in \mathbb{N}$, if
the stopping criterion \eqref{eq:TrialPointApproxOptCond} is not satisfied for $y_k(\varepsilon)$ with
$k \in \{ 0, 1, \ldots, n - 1 \}$ (otherwise, the meta-algorithm never reaches $n$th iteration). 

Thus, the meta-algorithm is correctly defined and the corresponding (finite or infinite) sequence 
$\{ y_n(\varepsilon) \}$ satisfies the approximate distance decay condition: 
\[
  \| y_n(\varepsilon) - z \| = \theta_n < \theta_{n - 1} = \| y_{n - 1}(\varepsilon) - z \|.
\]
From this inequality it follows that all polytopes $P_0, P_1, \ldots, P_n, \ldots$ are distinct. Recall that each
$P_i$ is the convex hull of $d + 1$ points from the set $\{ x_1, \ldots, x_{\ell} \}$. Since there is only a finite
number of distinct $d + 1$-point subsets of the set $\{ x_1, \ldots, x_{\ell} \}$, one must conclude that after a finite
number of iterations the stopping criterion \eqref{eq:TrialPointApproxOptCond} must be satisfied, that is, the
meta-algorithm terminates after a finite number of iterations. Moreover, any point $y_n(\varepsilon)$ satisfying 
the stopping criterion also satisfies the inequality $\| y_n(\varepsilon) - x_* \| < \sqrt{\eta}$ by
Proposition~\ref{prp:ApproximateOptimality}. 
\end{proof}

\begin{remark}
Let us comment on the assumption \eqref{eq:ParametersConsistency} on parameters of the algorithm
$\mathcal{A}_{\varepsilon}$ and Meta-algorithm~\ref{alg:NearestPointRobust}. Roughly speaking, inequality
\eqref{eq:ParametersConsistency} means that to solve the problem $(\mathcal{P})$ with a pre-specified accuracy 
$\eta > 0$ with the use of Meta-algorithm~\ref{alg:NearestPointRobust} one must assume that the algorithm
$\mathcal{A}_{\varepsilon}$ solves the corresponding reduced nearest point subproblems $\min_{x \in P_n} \| x - z \|$
with higher accuracy. Qualitatively, condition \eqref{eq:ParametersConsistency} can be rewritten as 
$\eta = O(\sqrt{\varepsilon})$ and viewed as a mathematical formulation of an intuitively obvious fact that
Meta-algorithm~\ref{alg:NearestPointRobust} has lower accuracy than the algorithm $\mathcal{A}_{\varepsilon}$ that is
used as a subroutine on each iteration. However, when considered quantitatively, inequalities
\eqref{eq:ParametersConsistency} seem to be too conservative. They can be slighly relaxed, if one uses a different
stopping criterion of the form
\[
  \langle y_n(\varepsilon) - z, x_i - y_n(\varepsilon) \rangle \ge -\eta \| x_i - y_n(\varepsilon) \|
  \quad \forall i \in I.
\]
Then arguing in the same way as in the proof of Lemma~\ref{lem:ApproxDistanceDecay} one can check that it is sufficient
to suppose that $\sqrt{2 \varepsilon \theta_0 + \varepsilon^2} < \eta \le 1$.
\end{remark}

\begin{remark}
Note that one can replace the approximate optimality condition on the algorithm $\mathcal{A}_{\varepsilon}$ from 
Lemma~\ref{lem:ApproxDistanceDecay} by the following condition: for any point $w \in \mathbb{R}^d$ and any polytope 
$Q = \co\{ u_1, \ldots, u_m \} \subset \mathbb{R}^d$ the point $y = \sum_{i = 1}^m \alpha^{(i)} u_i$ with
$\alpha = \mathcal{A}_{\varepsilon}(w, Q)$ satisfies the inequality
\[
  \langle y - z, u_i - y \rangle \ge - \varepsilon \quad \forall i \in \{ 1, \ldots, m \},
\]
that is, the algorithm $\mathcal{A}_{\varepsilon}$ returns a point approximately satisfying the optimality conditions
for the problem $\min_{u \in Q} \| u - w \|$. If $u_*$ is an optimal solution of this problem, then by
Proposition~\ref{prp:ApproximateOptimality} one has 
$\| \mathcal{A}_{\varepsilon}(w, Q) - u_* \| \le \sqrt{\varepsilon}$. Consequently, the theorem above remains to hold
true in this case, provided $\varepsilon$ is replaced by $\sqrt{\varepsilon}$ in the first inequality in
\eqref{eq:ParametersConsistency}.
\end{remark}

Let us now prove correctness and finite termination of Meta-algorithm~\ref{alg:NearestPointRobust} in the general case.

\begin{theorem} \label{thm:RobustMetaAlgCorrectness}
Let $\ell \ge d + 1$, inequalities \eqref{eq:ParametersConsistency} be satisfied, and the following 
approximate optimality conditions hold true:
\begin{enumerate}
\item{for any point $w \in \mathbb{R}^d$ and any polytope $Q = \co\{ u_1, \ldots, u_m \} \subset \mathbb{R}^d$ one has 
\[
  \Big\| \sum_{i = 1}^m \alpha^{(i)} u_i - u_* \Big\| < \varepsilon, \quad
  \sum_{i = 1}^m \alpha^{(i)} = 1, \quad \alpha^{(i)} \ge 0 \quad \forall i \in \{ 1, \ldots, m \},
\]
where $\alpha = \mathcal{A}_{\varepsilon}(w, Q)$ and $u_*$ is an optimal solution of the nearest point problem 
$\min_{u \in Q} \| u - w \|$;
}

\item{if for some $n \in \mathbb{N}$ the meta-algorithm executes Step 3, then $\sum_{i \in I_n} \beta_n^{(i)} = 1$ and
$\| h_n - z \| \le \| y_n(\varepsilon) - z \|$; 
}

\item{if for some $n \in \mathbb{N}$ the vectors $x_i$, $i \in I_n$, are affinely independent and 
the meta-algorithm executes Step~3, then $\beta_{\min} \le 0$;
}

\item{if for some $n \in \mathbb{N}$ the meta-algorithm executes Step~4, then
\[
  \Big\| \sum_{i \in I_n \setminus \{ j \}} \gamma^{(i)} (x_i - x_j) \Big\| 
  < \frac{\theta_{n - 1} - \theta_n}{\lambda}, \quad 
  \sum_{i \in I_n \setminus \{ j \}} \gamma^{(i)} \ne 0
\]
where $\lambda > 0$ is computed on Step~4 (if $n = 0$, then only the second inequality should be satisfied).
}
\end{enumerate}
Then Meta-algorithm~\ref{alg:NearestPointRobust} is correctly defined, executes Steps~3 and 4 at most once per
iteration, terminates after a finite number of iterations, and returns a point $y_n(\varepsilon)$ such that 
$\| y_n(\varepsilon) - x_* \| < \sqrt{\eta}$, where $x_*$ is an optimal solution of the problem $(\mathcal{P})$.
\end{theorem}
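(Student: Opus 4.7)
The approach is to prove by induction on the iteration index $n$ that, after any correction performed by Step~3 and/or Step~4, the vector $\alpha_n(\varepsilon)$ at Step~1 has at least one zero component and the quantity $\theta_n$ has not been increased (relative to either $\theta_{n - 1}$ for $n \ge 1$, or to the value returned by $\mathcal{A}_{\varepsilon}$ for $n = 0$). Combined with Lemma~\ref{lem:ApproxDistanceDecay}, this will yield the approximate distance decay $\theta_{n + 1} < \theta_n$ at the subsequent execution of Step~2, which simultaneously certifies that Steps~3 and 4 are not revisited in the current iteration and that Lemma~\ref{lem:ApproxDistanceDecay} applies again on the next one.

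The analysis of Step~3 proceeds as in the minor cycle of Wolfe's method. Since $\sum_{i \in I_n} \beta_n^{(i)} = 1$ by assumption~2, the updated vector still sums to one. If $\beta_{\min} = 0$, the replacement $\alpha_n(\varepsilon) \leftarrow \beta_n$ introduces a zero coordinate and gives $\theta_n^{\rm new} = \| h_n - z \| \le \theta_n$, again by assumption~2. If $\beta_{\min} < 0$, the ratio test \eqref{eq:WolfeCoef} yields $\lambda \in (0, 1]$; the convex combination $(1 - \lambda) \alpha_n + \lambda \beta_n$ is componentwise nonnegative with a zero entry at the minimizing index, and convexity of $\| \cdot - z \|^2$ together with $\| h_n - z \| \le \theta_n$ gives $\theta_n^{\rm new} \le \theta_n$. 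When $\beta_{\min} > 0$, affine independence of the $x_i$ ($i \in I_n$) is excluded by assumption~3, so these points must be affinely dependent and the algorithm proceeds to Step~4.

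For Step~4 I would use affine dependence to conclude that the system under consideration is consistent, and the identity
\[
  \sum_{i \in I_n} \gamma_n^{(i)} x_i
  = \sum_{i \in I_n \setminus \{ j \}} \gamma_n^{(i)} (x_i - x_j),
\]
which follows from the definition $\gamma_n^{(j)} = - \sum_{i \ne j} \gamma_n^{(i)}$, then allows assumption~4 to be applied directly: the correction $\lambda \sum_i \gamma_n^{(i)} x_i$ added to $y_n(\varepsilon)$ has Euclidean norm strictly less than $\theta_{n - 1} - \theta_n$, so $\theta_n^{\rm new} < \theta_{n - 1}$ for $n \ge 1$ while $\theta_n^{\rm new}$ is merely finite for $n = 0$. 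The condition $\sum_{i \ne j} \gamma^{(i)} \ne 0$ guarantees $\gamma_n \ne 0$ together with $\sum_i \gamma_n^{(i)} = 0$, so negative entries of $\gamma_n$ exist and the ratio test defining $\lambda$ is well posed; the standard argument then shows that $\alpha_n(\varepsilon) + \lambda \gamma_n$ is componentwise nonnegative with a zero entry at the chosen index~$k$.

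These observations close the induction: after any corrections the modified $\alpha_n(\varepsilon)$ satisfies $\min_{i \in I_n} \alpha_n^{(i)}(\varepsilon) = 0$ and the inductive hypothesis $\theta_{k + 1} < \theta_k$ for $k < n$ remains in force, so Lemma~\ref{lem:ApproxDistanceDecay} produces $\theta_{n + 1} < \theta_n$ on the next execution of Step~2 and the meta-algorithm increments $n$ without re-entering Step~3 or Step~4. Strict decay of $\theta_n$ across iterations forces the polytopes $P_n$ to be pairwise distinct, so termination occurs within at most $\binom{\ell}{d + 1}$ iterations, and the returned $y_n(\varepsilon)$ satisfies the stopping criterion \eqref{eq:TrialPointApproxOptCond}, whence Proposition~\ref{prp:ApproximateOptimality} yields $\| y_n(\varepsilon) - x_* \| \le \sqrt{\eta}$. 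The main obstacle is the quantitative bookkeeping in Step~4, namely ensuring that the least-squares residual bound in assumption~4 exactly matches the margin $\theta_{n - 1} - \theta_n$ by which $\theta_n$ may be perturbed upward, together with the corner case $n = 0$ where no prior decay gap is available to absorb this perturbation.
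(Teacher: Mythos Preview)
Your proposal is correct and follows essentially the same approach as the paper's proof: both argue by induction on $n$, split the analysis of Step~3 into the three cases $\beta_{\min}<0$, $\beta_{\min}=0$, $\beta_{\min}>0$ (the last forcing affine dependence via assumption~3 and passage to Step~4), verify that the corrected $\alpha_n(\varepsilon)$ has a zero coordinate while $\theta_n$ stays below $\theta_{n-1}$, invoke Lemma~\ref{lem:ApproxDistanceDecay} to obtain $\theta_{n+1}<\theta_n$, and conclude finite termination from the pairwise distinctness of the $P_n$. The only points the paper makes slightly more explicit are that $\min_{i\in I_n}\alpha_n^{(i)}(\varepsilon)>0$ whenever Step~3 is actually reached (this is what forces $\lambda\in(0,1)$ rather than $(0,1]$ in the $\beta_{\min}<0$ case) and the auxiliary observation $z\notin P_n$; neither omission affects your argument.
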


\begin{proof}
Firstly, let us note that if the following two conditions hold true:
\begin{enumerate}
\item{the meta-algorithm is correctly defined (that is, there are no infinite loops involving Steps 3 and 4),}

\item{the updating of $\theta_n$ on Steps~3 and 4 preserves the condition $\theta_n < \theta_{n - 1}$,}
\end{enumerate}
then the meta-algorithm generates a finite or infinite sequence $\{ y_n(\varepsilon) \}$ satisfying the approximate
distance decay condition:
\begin{equation} \label{eq:ApproxDistDecayOverSeq}
  \theta_0 > \theta_1 > \ldots > \theta_n > \ldots
\end{equation}
(recall that $\theta_n = \| y_n(\varepsilon) - z \|$). Indeed, according to the description of the method (see
Meta-algorithm~\ref{alg:NearestPointRobust}), the meta-algorithm increments $n$ and moves to the next iteration if and
only if the condition $\theta_{n + 1} < \theta_n$ is satisfied on Step~2. Otherwise, it moves to Steps 3 and 4, corrects
$\alpha_n(\varepsilon)$ and $y_n(\varepsilon)$, and repeats Steps 1 and 2 till the condition $\theta_{n + 1} < \theta_n$
is satisfied. Thus, if (i) there are no infinite loops involving Steps 3 and 4, and (ii) the condition 
$\theta_n < \theta_{n - 1}$ is preserved after updating $\theta_n$ on Steps~3 and 4, then inequalities
\eqref{eq:ApproxDistDecayOverSeq} hold true.

Secondly, as was noted in the proof of Theorem~\ref{thrm:RobustMetaAlg_Wolfe}, the validity of the approximate distance
decay condition \eqref{eq:ApproxDistDecayOverSeq} implies that all polytopes $P_0, P_1, \ldots, P_n, \ldots$ are
distinct. Hence taking into account the facts that each $P_i$ is the convex hull of $d + 1$ points from the set 
$\{ x_1, \ldots, x_{\ell} \}$ and there is only a finite number of distinct $d + 1$-point subsets of
$\{ x_1, \ldots, x_{\ell} \}$, one must conclude that after a finite number of iterations the stopping criterion
\eqref{eq:TrialPointApproxOptCond} must be satisfied. Moreover, any point $y_n(\varepsilon)$ satisfying this criterion
also satisfies the inequality $\| y_n(\varepsilon) - x_* \| < \sqrt{\eta}$ by
Proposition~\ref{prp:ApproximateOptimality}. 

Thus, to complete the proof of the theorem we need to prove that (i) there are no infinite loops involving Steps~3 and
4, and (ii) the updating of $\theta_n$ on Steps~3 and 4 does not break the condition $\theta_n < \theta_{n - 1}$. In
addition, our aim is to prove a slightly stronger statement that Steps 3 and 4 are executed at most once per iteration.
Let us prove all these statements by induction.

Since the proof of the case $n = 0$ is essentially the same as the proof of the inductive step, we will consider only
the inductive step. Fix any $n \in \mathbb{N}^*$ and suppose that for any $k \in \{ 0, 1, \ldots, n - 1 \}$ Steps~3 and
4 were performed at most once during the $k$th iteration and the condition 
\begin{equation} \label{eq:ApproxDistDecay_nthIter}
  \theta_0 > \theta_1 > \ldots > \theta_{n - 2} > \theta_{n - 1} >\theta_n
\end{equation}
holds true.

Clearly, we only need to consider the case when the meta-algorithm executes Step~3 on iteration $n$ and Step~3 has not
been executed before on this iteration. In this case, according to the scheme of the meta-algorithm
$y_n(\varepsilon) = \sum_{i \in I_n} \alpha_n^{(i)}(\varepsilon) x_i$ with
$\alpha_n(\varepsilon) = \mathcal{A}_{\varepsilon}(z, P_n)$ and the point $y_n(\varepsilon)$ does not satisfy the
stopping criterion \eqref{eq:TrialPointApproxOptCond} (see Steps~1--4). Let us check that $z \notin P_n$.

Indeed, suppose that $z \in P_n$. Then $y_n = z$ is an optimal solution of the problem $\min_{y \in P} \| y - z \|$.
Observe that by the first approximate optimality condition one has $\| y_n - y_n(\varepsilon) \| \le \varepsilon$.
Therefore by Proposition~\ref{prp:ApproximateOptimality} one has
\[
  \langle y_n(\varepsilon) - z, x_i - y_n(\varepsilon) \rangle \ge - \diam(P_n) \varepsilon
  \ge - \diam(P) \varepsilon
  \quad \forall i \in I.
\]
Hence taking into account the first inequality in \eqref{eq:ParametersConsistency} one can conclude that 
$y_n(\varepsilon)$ satisfies the stopping criterion \eqref{eq:TrialPointApproxOptCond}, which is impossible. 

Thus, $z \notin P_n$. Note also that $\min_{i \in I_n} \alpha_n(\varepsilon) > 0$, since otherwise by 
Lemma~\ref{lem:ApproxDistanceDecay} the meta-algorithm does not execute Step~3.

Recall that $h_n = \sum_{i \in I_n} \beta_n^{(i)} x_i$ is the approximate projection of $z$ onto the affine hull
$\aff\{ x_i \mid i \in I_n \}$ computed on Step~3 and $\beta_{\min} = \min_{i \in I_n} \beta_n^{(i)}$. Let us consider
three cases.

\textbf{Case I.} Let $\beta_{\min} < 0$. Observe that for $\lambda$ defined in \eqref{eq:WolfeCoef} one has
$\lambda \in (0, 1)$, since $\alpha_n^{(i)}(\varepsilon) > 0$ for all $i \in I_n$. Define 
$\xi_n = (1 - \lambda)\alpha_n(\varepsilon) + \lambda \beta_n$ and $w_n = (1 - \lambda) y_n(\varepsilon) + \lambda h_n$.
Then
\[
  \sum_{i \in I_n} \xi_n^{(i)} = 
  (1 - \lambda) \sum_{i \in I_n} \alpha_n^{(i)}(\varepsilon) + \lambda \sum_{i \in I_n} \beta_n^{(i)} 
  = (1 - \lambda) + \lambda = 1
\]
(here the penultimate equality holds true by the first and second approximate optimality conditions). Moreover, if
$\beta_n^{(i)} \ge 0$, then obviously $\xi_n^{(i)} \ge 0$, while if $\beta_n^{(i)} < 0$, then
\[
  \xi_n^{(i)} = (\alpha_n^{(i)}(\varepsilon) - \beta_n^{(i)}) 
  \left( \frac{\alpha_n^{(i)}(\varepsilon)}{\alpha_n^{(i)}(\varepsilon) - \beta_n^{(i)}} - \lambda \right) \ge 0
\]
by the definition of $\lambda$. In addition, for any $i$ on which the minimum in the definition of $\lambda$ is
attained (see \eqref{eq:WolfeCoef}) one has $\xi_n^{(i)} = 0$. Hence the point 
$w_n = (1 - \lambda) y_n(\varepsilon) + \lambda h_n$ is a convex combination of the vectors $x_i$, $i \in I_n$, that is,
$w_n \in P_n$. Furthermore, by the second approximate optimality condition one has
\[
  \| w_n - z \| \le (1 - \lambda) \| y_n(\varepsilon) - z \| + \lambda \| h_n - z \|
  \le \| y_n(\varepsilon) - z \| = \theta_n.
\]
Thus, after updating $\alpha_n(\varepsilon)$, $y_n(\varepsilon)$, and $\theta_n$ on Step~3 one has
\begin{equation} \label{eq:UpdateCorrectness}
  \sum_{i \in I_n} \alpha_n^{(i)}(\varepsilon) = 1, \quad \min_{i \in I_n} \alpha_n^{(i)}(\varepsilon) = 0,
  \quad \theta_n < \theta_{n - 1}.
\end{equation}
Consequently, by Lemma~\ref{lem:ApproxDistanceDecay} after performing Step~1 and computing
$\alpha_{n + 1}(\varepsilon) = \mathcal{A}_{\varepsilon}(z, P_{n + 1})$ on Step~2 one has $\theta_{n + 1} < \theta_n$.
Thus, the meta-algorithm increments $n$ and moves to the next iteration. In other words, in the case $\beta_{\min} < 0$
Step~3 of the meta-algorithm is performed only once and the condition $\theta_n < \theta_{n - 1}$ is preserved.

\textbf{Case II.} Let $\beta_{\min} = 0$. Then $h_n \in P_n$. Moreover, by the second optimality condition
$\| h_n - z \| \le \| y_n(\varepsilon) - z \|$, which implies that after setting $\theta_n = \| h_n - z \|$ 
the condition $\theta_n < \theta_{n - 1}$ is preserved. In addition, for the updated value of 
$\alpha_n(\varepsilon) = \beta_n(\varepsilon)$ one has $\min_{i \in I_n} \alpha_n^{(i)}(\varepsilon) = 0$. Therefore,
by Lemma~\ref{lem:ApproxDistanceDecay} one can conclude that on Step~2 the condition $\theta_{n + 1} < \theta_n$ is
satisfied. Thus, in the case $\beta_{\min} = 0$ the meta-algorithm executes Step~3 only once and then after performing
Steps~1 and 2 moves to the next iteration.

\textbf{Case III.} Let $\beta_{\min} > 0$. In this case the points $x_i$, $i \in I_n$, are affinely dependent by
the third approximate optimality condition, and the meta-algorithm moves to Step~4. Let $\gamma_n$ and $\lambda$ be
computed on Step~4. Recall that by definitions
\[
  \lambda 
  = \min\left\{ - \frac{ \alpha_n^{(i)}}{\gamma_n^{(i)}} \Biggm| i \in I_n \colon \gamma_n^{(i)} < 0 \right\} > 0.
\]
and $\sum_{i \in I_n} \gamma_n^{(i)} = 0$. 

Define $\xi_n = \alpha_n(\varepsilon) + \lambda \gamma_n$. Then $\sum_{i \in I_n} \xi_n^{(i)} = 1$, for any $i \in I_n$
such that $\gamma_n^{(i)} \ge 0$ one has $\xi_n^{(i)} \ge 0$, while for any $i \in I_n$ such that $\gamma_n^{(i)} < 0$
one has
\[
  \xi_n^{(i)} = - \gamma_n^{(i)} \left( -\frac{\alpha_n^{(i)}(\varepsilon)}{\gamma_n^{(i)}} - \lambda \right) \ge 0.
\]
In addition, for any $i \in I_n$ on which the minimum in the definition of $\lambda$ is attained one has
$\xi_n^{(i)} = 0$. Note finally that by the fourth approximate optimality condition
\begin{multline*}
  \Big\| \sum_{i \in I_n} \xi_n^{(i)} x_i - z \Big\|
  = \Big\| \sum_{i \in I_n} \alpha_n^{(i)}(\varepsilon) x_i - z 
  + \lambda \sum_{i \ne j} \gamma_n^{(i)} (x_i - x_j) \Big\|
  \\
  \le \| y_n(\varepsilon) - z \| + \lambda \Big\| \sum_{i \ne j} \gamma_n^{(i)} (x_i - x_j) \Big\|
  < \theta_n + \lambda \frac{\theta_{n - 1} - \theta_n}{\lambda} = \theta_{n - 1}.
\end{multline*}
Therefore, after an update on Step~4, values of $\alpha_n(\varepsilon)$, $y_n(\varepsilon)$, and $\theta_n$ satisfy
conditions \eqref{eq:UpdateCorrectness}, which thanks to Lemma~\ref{lem:ApproxDistanceDecay} implies that after
performing Steps~1 and 2 the meta-algorithm increments $n$ and moves to the next iteration. In other words, in the case
$\beta_{\min} > 0$ the meta-algorithm executes Steps~3 and 4 only once and then moves to the next iteration.
Furthermore, the condition $\theta_n < \theta_{n - 1}$ is preserved in this case as well.
\end{proof}

\begin{remark}
Let us comment on the approximate optimality conditions from the previous theorem:

\noindent{(i)~The first condition simply states that the algorithm $\mathcal{A}_{\varepsilon}(w, Q)$ always returns 
a point from $Q$ that lies in the $\varepsilon$ neighbourhood of the projection of $w$ onto $Q$. In turn, the fourth
condition indicates the accuracy with which an approximate least squares solution on Step~4 should be computed. Note
that while the second equality in
\[
  \sum_{i \in I_n \setminus \{ j \}} \gamma^{(i)} (x_i - x_j) = 0, \quad 
  \sum_{i \in I_n \setminus \{ j \}} \gamma^{(i)} = 1
\]
is essentially irrelevant, as long as the sum of $\gamma^{(i)}$, $i \in I_n \setminus \{ j \}$, is nonzero, the first
equality must be solved with high enough accuracy to ensure that after updating $y_n(\varepsilon)$ the inequality
$\theta_n < \theta_{n - 1}$ still holds true.
}

\noindent{(ii)~The second approximate optimality condition states that the approximate distance to the affine hull
$x_i$, $i \in I_n$, computed by a subroutine on Step~3 of Meta-algorithm~\ref{alg:NearestPointRobust}, does not exceed
the approximate distance to the convex hull of these points computed by the algorithm $\mathcal{A}_{\varepsilon}$.
Roughly speaking, the second approximate optimality condition means that an approximate projection of $z$ onto the
affine hull $\aff\{ x_i \mid i \in I_n \}$, is computed on Step~3 with at least the same accuracy as the algorithm
$\mathcal{A}_{\varepsilon}$ computes an approximate projection of $z$ onto the convex hull 
$P_n = \co\{ x_n \mid i \in I_n \}$.
}

\noindent{(iii)~The third approximate optimality condition is needed to exclude some degenerate cases. It should be
noted that in the ideal case when $\varepsilon = 0$, this assumption is not needed. Indeed, if the points $x_i$, 
$i \in I_n$, are affinely independent, then their affine hull coincides with the entire space $\mathbb{R}^d$.
Hence taking into account the fact that $z \notin P_n$ (otherwise, the stopping criterion
\eqref{eq:TrialPointApproxOptCond} would have been satisfied) one can conclude that $\beta_{\min} < 0$. However, when
computations are performed with finite precision, for some highly degenerate problems one might have $\beta_{\min} > 0$,
even in the case when $x_i$, $i \in I_n$, are affinely independent, due to computational errors. In such cases the
method might get stuck in an infinite loop of correcting the coefficients $\alpha_n(\varepsilon)$. The third approximate
optimality condition excludes such situations. It should be noted that a foolproof version of
Meta-algorithm~\ref{alg:NearestPointRobust} must keep track of whether a correction of $\alpha_n(\varepsilon)$ on
Steps~3 and 4 has already been attempted and send an error message, if the method tries to correct the coefficients the
second time.
}
\end{remark}

\subsection{Numerical experiments}
\label{subsect:NumExperiments_NPP}

The proposed acceleration technique was verified numerically via multiple experiments with various values of $d$ 
and $\ell$. For each choice of $d$ and $\ell$ we randomly generated 10 problems and average computation time for 
these problems was used as a performance measure.

The problem data was chosen as follows. First, we randomly generated $\ell$ points 
$\{ \widehat{x}_1, \ldots, \widehat{x}_{\ell} \}$ in $\mathbb{R}^d$, uniformly distributed over the $d$-dimensional cube
$[-1, 1]^d$. Then these points were compressed and shifted as follows:
\[
  x_i = (1 + 0.01 \widehat{x}_i^{(1)}, \widehat{x}_i^{(2)}, \ldots, \widehat{x}_i^{(d)}) 
  \quad \forall i \in \{ 1, \ldots, \ell \}.
\] 
The point $z$ was chosen as $z = 0$. As was noted in \cite{Wolfe} and demonstrated by our numerical experiments, this
particular problem is very challenging for methods for finding the nearest point in a polytope (especially in the cases
when either $\ell$ is much greater than $d$ or $d$ is large) both in terms of computation time and finding an optimal
solution with high accuracy. For our numerical experiments we used the values $d = 3$, $d = 10$, and $d = 50$, while
values of $\ell$ were chosen from the set 
\[
  \{ 100, 200, 300, 500, 1000, 2000, 3000, 5000, 10000, 20000, 30000, 50000 \}
\]
and depended on $d$.

\begin{remark}
Let us note that we performed numerical experiments for many other values of $d$ and $\ell$, as well as, for many other
types of problem data. Since the results of corresponding numerical experiments were qualitatively the same as the ones
presented below, we do not include them here for the sake of shortness.
\end{remark}

Without trying to conduct exhaustive numerical experiments, we tested our acceleration technique on 3 classic methods:
the MDM method \cite{MDM}, the Wolfe method \cite{Wolfe}, and a method based on quadratic programming.
All methods were implemented in \textsc{Matlab}. The last method was based on solving the problem
\[
  \min_{\alpha} \frac{1}{2} \Big\| \sum_{i = 1}^{\ell} \alpha^{(i)} x_i \Big\|^2 \quad \text{subject to} \quad
  \sum_{i = 1}^{\ell} \alpha^{(i)} = 1, \quad \alpha^{(i)} \ge 0, \quad i \in I
\]
with the use of \texttt{quadprog}, the standard \textsc{Matlab} routine for solving quadratic programming problems. 
We used this routine with default settings. The number of iterations of the MDM and Wolfe method was limited to $10^6$. 
We used the inequalities
\[
  \delta(v_k) < \varepsilon, \quad \langle X, P_J \rangle > \langle X, X \rangle - \varepsilon
\]
with $\varepsilon = 10^{-4}$ as termination criteria for the MDM method and the Wolfe method respectively (see the
descriptions of these methods in \cite{MDM,Wolfe}). The value $10^{-4}$ was used, since occasionally both methods failed
to terminate with smaller value of $\varepsilon$ for large $\ell$ (this statement was especially true for the MDM
method).

Finally, we implemented each method ``on its own'' and also incorporated each method within the robust acceleration
technique, that is, Meta-algorithm~\ref{alg:NearestPointRobust}. The initial guess for the meta-algorithm was chosen as
\[
  I_0 = \{ 1, \ldots, d + 1 \}, \quad P_0 = \co\{ x_1, \ldots, x_{d + 1} \}.
\]
To demonstrate that the estimate of $\eta$ in Lemma~\ref{lem:ApproxDistanceDecay} and
Theorems~\ref{thrm:RobustMetaAlg_Wolfe} and \ref{thm:RobustMetaAlgCorrectness} (see \eqref{eq:ParametersConsistency}) is
very conservative, we used the value $\eta = 10^{-4}$ for $d = 3$ and $d = 10$, and  $\eta = 5 \cdot 10^{-4}$ for 
$d = 50$, since the acceleration technique occasionally failed to find a point satisfying the stopping criterion for
$\eta = 10^{-4}$ in this case. In addition, we terminated the computations, if computation time exceeded 1 minute.

\begin{figure}[t] 
\includegraphics[width=0.5\textwidth]{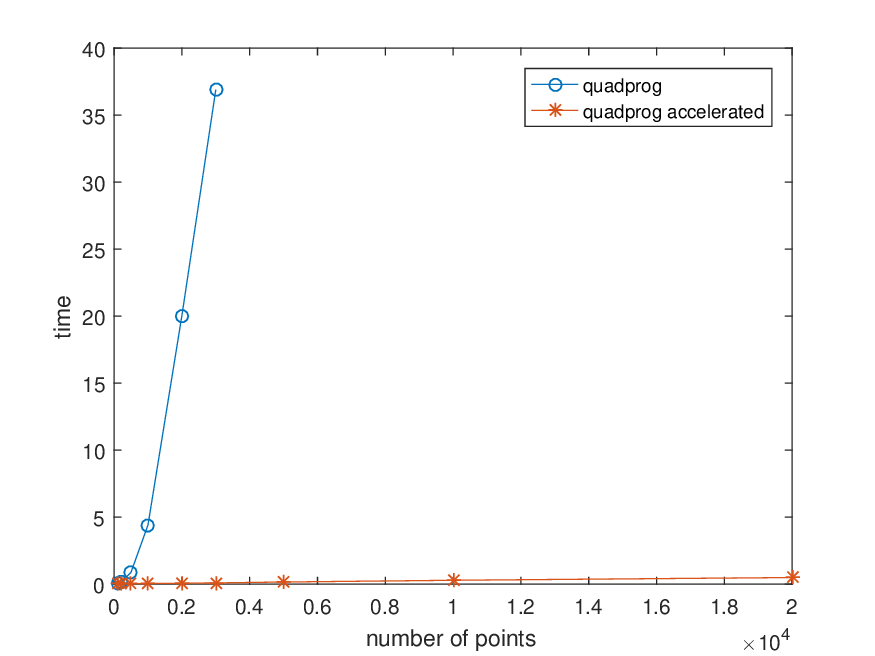}
\includegraphics[width=0.5\textwidth]{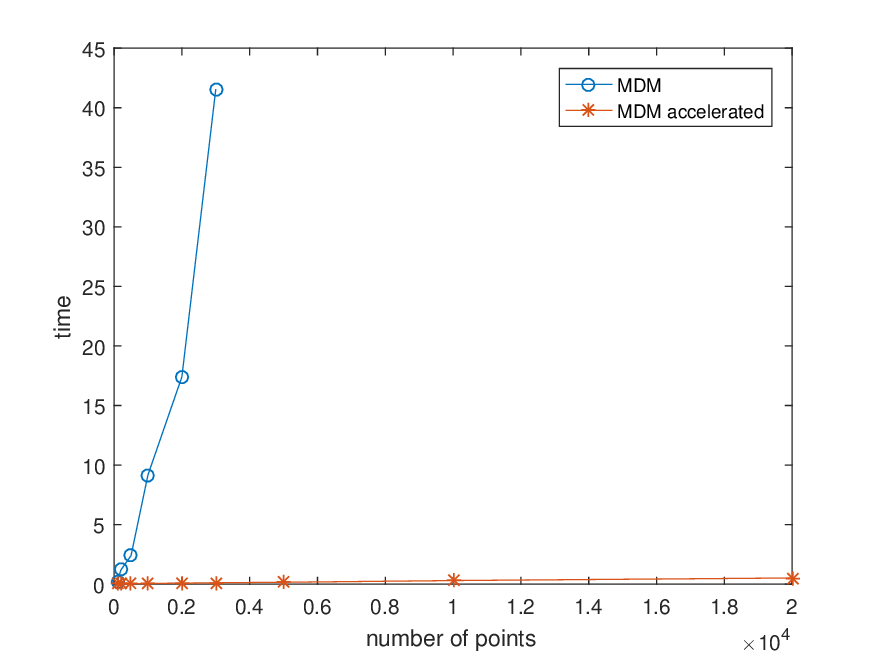}
\caption{The results of numerical experiments in the case $d = 3$ for \texttt{quadprog} routine (left figure) and
the MDM method (right figure).}
\label{fig:NearestPoint3}
\end{figure}

\begin{figure}[t]
\includegraphics[width=0.5\textwidth]{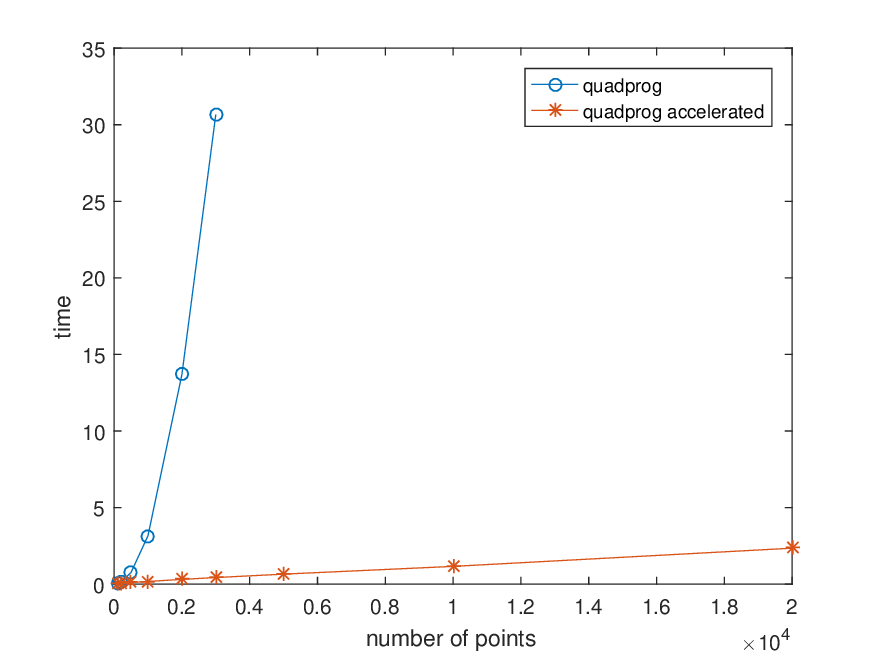}
\includegraphics[width=0.5\textwidth]{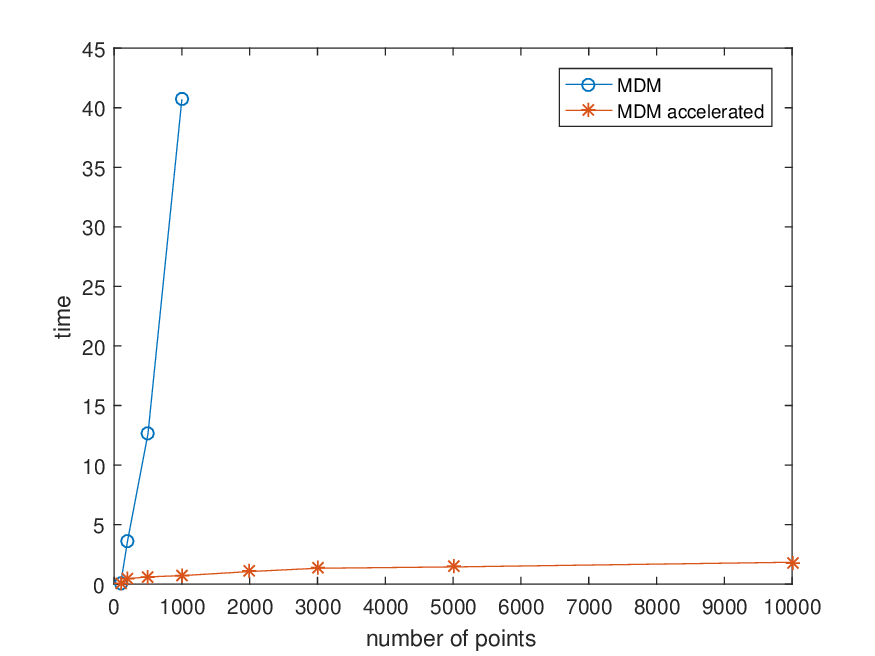}
\caption{The results of numerical experiments in the case $d = 10$ for \texttt{quadprog} routine (left figure) and
the MDM method (right figure).}
\label{fig:NearestPoint10}
\end{figure}

\begin{figure}[t] 
\includegraphics[width=0.5\textwidth]{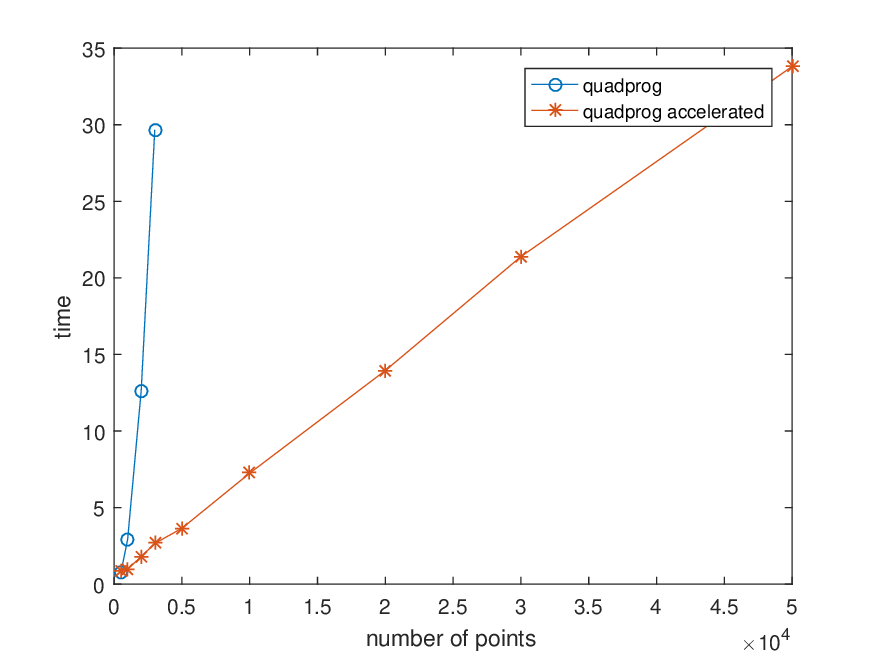}
\includegraphics[width=0.5\textwidth]{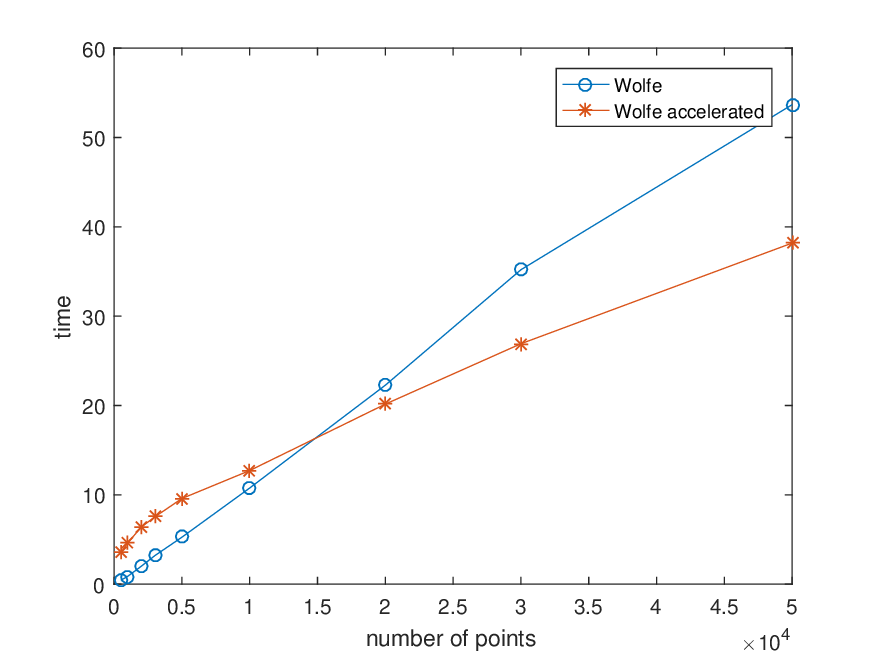}
\caption{The results of numerical experiments in the case $d = 50$ for \texttt{quadprog} routine (left figure) and
the Wolfe method (right figure).}
\label{fig:NearestPoint50}
\end{figure}

The results of numerical experiments are given in Figs.~\ref{fig:NearestPoint3}--\ref{fig:NearestPoint50}. Let us first 
note that both the MDM method and its accelerated version were very slow and inaccurate in comparison with other methods
in the case $d = 50$. That is why we do not present the corresponding results of the numerical experiments here. 
Furthermore, our numerical experiments showed that the difference in performance between the Wolfe method and its 
accelerated version significantly increases with the growth of $d$. Since the results were qualitatively the same 
for all $d$, here we present them only in the case $d = 50$, in which the difference in performance was the most
noticeable.

The numerical experiments clearly demonstrate that the proposed acceleration technique with the steepest descent
exchange rule allows one to significantly reduce the computation time for methods of finding the nearest point in a
polytope. In the case of \texttt{quadprog} routine and the MDM method the reduction in time is proportional to the
number of points $\ell$. Moreover, numerical experiments also showed that for the accelerated versions of these
methods the computation time increases \textit{linearly} in $\ell$ for the problem under consideration (but we do not
claim the linear in $\ell$ complexity of the acceleration technique for all types of problem data). 

In the case of the Wolfe method the situation is somewhat different. For relatively small values of $\ell$ the ``pure''
Wolfe method outperforms its accelerated version, but for larger $\ell$ the accelerated version is faster than the pure
method and the reduction of computation time increases with the increase of $\ell$. However, it should be noted that
precisely the same effect was observed for all other methods for finding the nearest point in a polytope and other
types of problem data. When $\ell$ is close to $d$, it is faster to solve the corresponding problem with the use of 
the algorithm $\mathcal{A}_{\varepsilon}$, while when $\ell$ exceeds a certain threshold (depending on a particular
method), the accelerated version of the algorithm $\mathcal{A}_{\varepsilon}$ starts to outperform the algorithm 
$\mathcal{A}_{\varepsilon}$ ``on its own''. Moreover, the reduction of the run-time increases with an increase
of $\ell$ and is proportional to $\ell$ for large values of $\ell$. 

Thus, the main difference between the Wolfe method and other methods tested in our numerical experiments lies in 
the fact that the threshold value of $\ell$, after which the acceleration technique becomes efficient, is significantly
higher for the Wolfe method that for other methods.

Finally, let us note that the average number of iterations of Meta-algorithm~\ref{alg:NearestPointRobust}, as expected,
depended on the method to which this technique was applied. It was the highest for the MDM method, while the number of
iterations of the meta-algorithm using the Wolfe method and \texttt{quadprog} routine was roughly the same. In the case
$d = 3$ it was equal to 6, in the case $d = 10$ it was equal to 25.6, while in the case $d = 50$ it was equal to 150.8
(note that the problem is harder to solve for larger $d$). The results of other multiple numerical experiments, not
reported here for the sake of shortness, showed that the average number of iterations of the meta-algorithm in most
cases lies between $d$ and $10d$ for various values of $d$ and $\ell$. Since the complexity of each iteration of the
method is proportional to $\ell$, the results of numerical experiments hint at the linear in $\ell$ average case
complexity of Meta-algorithm~\ref{alg:NearestPointRobust}, but we do not claim this complexity estimate to be true in
the general case (especially, in the worst case) and are not ready to provide its theoretical justification.

It should be noted that an increase of average number of iterations with an increase of $d$ is fully expected. 
To understand a reason behind it, observe that if the projection of $z$ onto the polytope 
$P = \co\{ x_1, \ldots, x_{\ell} \}$ belongs to the relative interior of a facet $F$ of $P$, then to find this
projection the meta-algorithm needs to find a subpolytope $P_n$ containing at least $d$ extreme point of $F$. If none of
these points belongs to the initial guess $P_0 = \co\{ x_1, \ldots, x_{d + 1} \}$, then at at least $d$ iterations are
needed to find the required polytope $P_n$. 

For example, if in the case $d = 2$ the polytope $P_0$ lies in the interior of $P$, then one needs at least 2
iterations for the polytope $P_n$ to contain the edge of $P$ to which the projection of $z$ onto $P$ belongs. In the
case $d = 3$, the minimal number of iterations increases to $3$, etc. Thus, the number of iterations of the
meta-algorithm grows whenever $d$ is increased. 

\begin{remark}
In our implementation of Meta-algo\-rithm~\ref{alg:NearestPointRobust}, the algorithm $\mathcal{A}_{\varepsilon}$ was
applied afresh on each iteration (i.e. without using any information from the previous iteration). It should be noted
that, in particular, the performance of the accelerated version of the Wolfe method can be significantly improved, if
one uses the \textit{corral} (see \cite{Wolfe}) computed on the previous iteration as the initial guess for the next
iteration (a similar remark is true for accelerated versions of other methods). However, since our main goal was to
demonstrate the performance of the acceleration technique on its own, here we do not discuss potential ways this
technique can be efficiently integrated with a particular method for finding the nearest point in a polytope and do not
present any results of numerical experiments for such fully integrated accelerated methods.
\end{remark}

\section{A comparison with the Wolfe method}
\label{sect:ComparisonWithWolfe}

Meta-algorithm \ref{alg:NearestPoint} with the steepest descent exchange rule and
Meta-algorithm~\ref{alg:NearestPointRobust} share many similarities with the Wolfe method \cite{Wolfe} (and the
Frank-Wolfe algorithms \cite{LacosteJulienJaggi2013,LacosteJulienJaggi2015}). Nonetheless, there is one important
difference between these meta-algorithms and the Wolfe method, which, as the results of numerical
experiments presented in the previous section demonstrate (see Fig.~\ref{fig:NearestPoint50}), allows 
Meta-algorithm~\ref{alg:NearestPointRobust} to outperform the Wolfe method \cite{Wolfe} in the case when the number of
points is significantly greater than the dimension of the space.

The difference consists in the way in which the steepest descent exchange rule and the Wolfe method remove redundant
points on each iteration. The Wolfe method operates with the so-called \textit{corrals} (i.e. an affinely independent
set of points from the polytope such that the projection of the origin onto the affine hull of this set belongs to the
relative interior of its convex hull), while Meta-algorithm \ref{alg:NearestPoint} with the steepest descent exchange
rule and Meta-algorithm~\ref{alg:NearestPointRobust} operate with convex hulls of $d + 1$ points without imposing any
assumptions on them. On each iteration of the Wolfe method, given a current corral $Q_n$, one adds a new point $x_n$ to
this corral in the same way points are added in the steepest descent exchange rule, and then constructs a new corral
$Q_{n + 1}$ from the set $\{ x_n, Q_n \}$, filtering out multiple redundant points in the general case. In contrast,
Meta-algorithm \ref{alg:NearestPoint} with the steepest descent exchange rule and
Meta-algorithm~\ref{alg:NearestPointRobust} remove only \textit{one} point on each iteration.

This difference, apart from saving significant amount of time needed to find a corral in the spaces of moderate and
large dimensions, also leads to a significantly different behaviour of Meta-algorithms \ref{alg:NearestPoint} and
\ref{alg:NearestPointRobust} in comparison with the Wolfe method in the general case. This difference in behaviour
occurs due to the fact that the projection of a given point onto  the ``unfiltered'' subpolytope used by the
meta-algorithms might be significantly different from the projection of a point onto the corral constructed by the Wolfe
method. The following simple example highlights this difference.

\begin{example}
Let $d = 2$, $\ell = 4$, $z = 0$, and
\[
  x_1 = (0, 4), \quad x_2 = (0, 2), \quad x_3 = (2, 2), \quad x_4 = (-2, 1).
\]
First, we consider the behaviour of the Wolfe method. We use the same notation as in Wolfe's original 
paper \cite{Wolfe}.
\begin{itemize}
\item{\textit{Step 0:} The point with minimal norm is $x_2$. Therefore define $S = \{ 2 \}$ and $w = 1$.}

\item{\textit{Iteration (major cycle) 1:} 
  \begin{itemize}
    \item{\textit{Step 1:} The point $X = x_2$ does not satisfy the stopping criterion. The minimum in
    $\min_J \langle X, x_J \rangle$ is attained for $J = 4$. Therefore, put $S = \{ 2, 4 \}$ and $w = (1, 0)$.}

    \item{\textit{Step 2:} Solving \cite[Eq.~(4.1)]{Wolfe} one gets $v = (0.6, 0.4)$. Put $w = v$.}
  \end{itemize}
}

\item{\textit{Iteration (major cycle) 2:} 
  \begin{itemize}
    \item{\textit{Step 1:} The point $X = 0.6 x_2 + 0.4 x_4 = (-0.8, 1.6)$ does not satisfy the stopping criterion. 
    The minimum in $\min_J \langle X, x_J \rangle$ is attained for $J = 3$. Therefore, put $S = \{ 2, 4, 3 \}$ and 
    $w = (0.6, 0.4, 0)$.}

    \item{\textit{Step 2:} Solving \cite[Eq.~(4.1)]{Wolfe} one gets $v = (0, 10/17, 7/17)$.}

    \item{\textit{Step 3:} One has POS $= \{ 2 \}$, $\theta = 1$, $w = v$. Therefore, remove point $x_2$, and
    put $S = \{ 4, 3 \}$ and $w = (10/17, 7/17)$.}

    \item{\textit{Step 2:} Solving \cite[Eq.~(4.1)]{Wolfe} one gets $v = (10/17, 7/17)$. Put $w = v$.}
  \end{itemize}
}

\item{\textit{Iteration (major cycle) 3: 
  \begin{itemize}
    \item{\textit{Step 1:} The point $X = (10/17) x_4 + (7/17) x_3 = (-6/17, 24/17)$ satisfies the stopping criterion.}
  \end{itemize}
  }
}
\end{itemize}
Let us now consider the behaviour of Meta-algorithm \ref{alg:NearestPoint} with the steepest descent exchange rule.
\begin{itemize}
\item{\textit{Initialization:} Let $I_0 = \{ 1, 2, 3 \}$. Put $P_0 = \co\{ x_1, x_2, x_3 \}$.
}

\item{\textit{Iteration 0:}
  \begin{itemize}
    \item{\textit{Step 1:} One has $\alpha_n = \mathcal{A}(z, P_0) = (0, 1, 0)$ and $y_n = x_2$. The optimality
    condition is not satisfied.}
  
    \item{\textit{Step 2:} By employing the steepest descent exchange rule, one gets $i_{10} = 1$ and $i_{20} = 4$.
    Therefore, $I_1 = \{ 2, 3, 4 \}$ and $P_1 = \co\{ x_2, x_3, x_4 \}$.}
  \end{itemize}
}

\item{\textit{Iteration 1:}
  \begin{itemize}
    \item{\textit{Step 1:} One has $\alpha_n = \mathcal{A}(z, P_1) = (0, 7/17, 10/17)$ and $y_n = (-6/17, 24/17)$. The
    optimality condition is satisfied and the meta-algorithm terminates.}
  \end{itemize}
}
\end{itemize}
Note that by not removing the ``redundant'' point $x_3$, Meta-algorithm \ref{alg:NearestPoint} is able to find the
optimal solution in just one iteration, while the Wolfe method, operating with corrals, needs to do several iterations
to find the corral $\{ x_3, x_4 \}$ containing the required projection.
\end{example}

\begin{figure}[t] 
\includegraphics[width=0.75\textwidth]{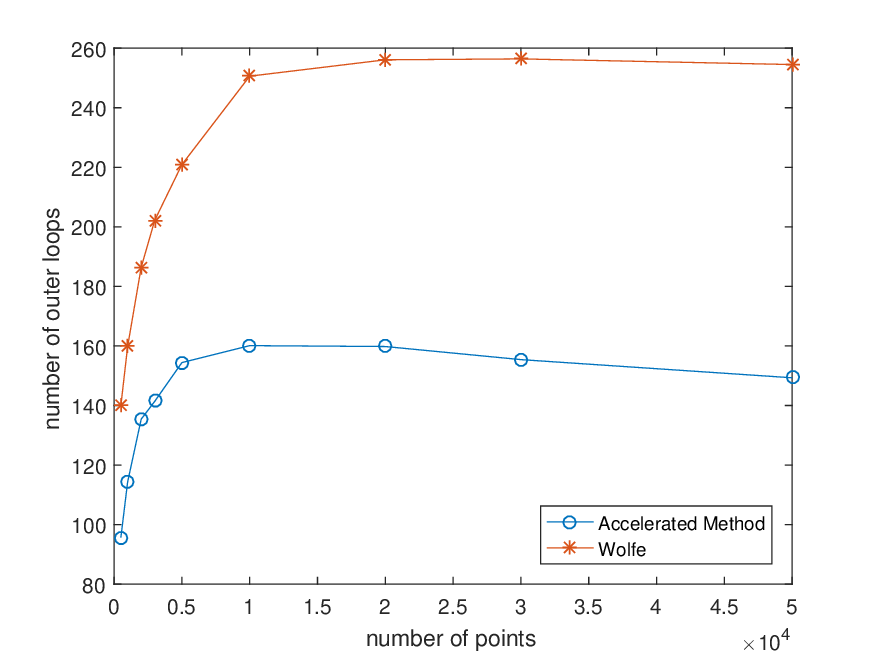}
\caption{The average number of outer loops (iterations/shifts of the subpolytope) for Meta-algorithm
\ref{alg:NearestPointRobust} and the Wolfe method in the case $d = 50$.}
\label{fig:NumberOfOuterLoops}
\end{figure}

In order to futher demonstrate how the different strategies of removing redundant points affect the performance of both
methods, we present the average number of outer loops (iterations/shifts of the subpolytope) for
Meta-algorithm~\ref{alg:NearestPointRobust} and the Wolfe method in the case $d = 50$ (see
Figure~\ref{fig:NumberOfOuterLoops}). Observe that the number of iterations of the meta-algorithm is significantly
smaller than the number of iterations of the Wolfe method, which explains why for large $\ell$ the meta-algorithm
outperformes the Wolfe method in terms of computation time. 

For the given problem data, the average number of iterations of the meta-algorithm stabilizes around 150 iterations for
large $\ell$, while in the case of the Wolfe method it stabilizes around 250 iterations for large $\ell$. Thus, 
the completely different strategy of removing redundant points allows the meta-algorithm to reduce the number of
iterations by about $40\%$ in comparison with the Wolfe method.

\section{Computing the distance between two polytopes}
\label{sect:DistanceBetweenPolytopes}

The acceleration technique for methods for finding the nearest point in a polytope from the previous section
admits a straightforward extension to the case of methods for computing the distance between two convex polytopes
defined as the convex hulls of finite sets of points. This section is devoted to a detailed discussion of such
extension, that is, to a discussion of an acceleration technique for methods of solving the following
optimization problem
\[
  \min_{(x, y)} \: \| x - y \| \enspace \text{ s.t. } \enspace
  x \in P := \co\big\{ x_1, \ldots, x_{\ell} \big\}, \enspace
  y \in Q = \co\big\{ y_1, \ldots, y_m \}.
  \quad \eqno{(\mathcal{D})}
\]
in the case when $\ell \gg d$ and $m \gg d$. Here $x_i \in \mathbb{R}^d$, $i \in I = \{ 1, \ldots, \ell \}$, and
$y_j \in \mathbb{R}^d$, $j \in J = \{ 1, \ldots, m \}$, are given points.

\subsection{An extension of the acceleration technique}
\label{subsect:TheoreticalDistance}

Before we proceed to the description of an acceleration technique, let us present convenient optimality conditions
for the problem $(\mathcal{D})$. These conditions are well-known, but we include their short proofs for the sake of
completeness. Denote by $Pr_A(x)$ the Euclidean projection of a point $x \in \mathbb{R}^d$ on a closed convex set 
$A \subset \mathbb{R}^d$, i.e. an optimal solution of the problem $\min_{y \in A} \| x - y \|$.

\begin{proposition} \label{prp:PolytopesDistOptCond}
A pair $(x_*, y_*) \in P \times Q$ is an optimal solution of the problem $(\mathcal{D})$ if and only if
$Pr_P(y_*) = x_*$ and $Pr_Q(x_*) = y_*$ or, equivalently,
\begin{equation} \label{eq:SeparateOptimality}
  \langle x_* - y_*, x_i - x_* \rangle \ge 0 \quad \forall i \in I, \quad
  \langle y_* - x_*, y_j - y_* \rangle \ge 0 \quad \forall j \in J.
\end{equation}
\end{proposition}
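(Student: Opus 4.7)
The plan is to prove the proposition in two main stages: first, to establish the equivalence between the projection formulation ($Pr_P(y_*) = x_*$ and $Pr_Q(x_*) = y_*$) and the variational inequalities \eqref{eq:SeparateOptimality}, which is essentially a restatement of Proposition~\ref{prp:NearPoint_OptCond} applied separately to each polytope; and second, to prove that these conditions are equivalent to global optimality of $(x_*, y_*)$ for $(\mathcal{D})$.

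For the first stage, I would observe that $Pr_P(y_*) = x_*$ means precisely that $x_*$ solves $\min_{x \in P} \|x - y_*\|$. Applying Proposition~\ref{prp:NearPoint_OptCond} with $z = y_*$ yields $\langle x_* - y_*, x_i - x_* \rangle \ge 0$ for all $i \in I$; exchanging the roles of the polytopes gives the condition on the $y_j$'s. This equivalence is purely formal and requires no new ideas.

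For the forward direction of the main equivalence, I would argue by restriction: if $(x_*, y_*)$ is globally optimal for $(\mathcal{D})$, then holding $y = y_*$ fixed, the point $x_*$ must minimize $\|x - y_*\|$ over $x \in P$, so $x_* = Pr_P(y_*)$; the symmetric argument gives $y_* = Pr_Q(x_*)$.

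For the backward direction I would use the identity obtained by adding and subtracting $x_* - y_*$: for any $x \in P$, $y \in Q$,
\begin{align*}
\|x - y\|^2 &= \|(x_* - y_*) + (x - x_*) - (y - y_*)\|^2 \\
&= \|x_* - y_*\|^2 + 2\langle x_* - y_*, x - x_* \rangle \\
&\quad {} + 2\langle y_* - x_*, y - y_* \rangle + \|(x - x_*) - (y - y_*)\|^2.
\end{align*}
Since $x = \sum_{i \in I} \alpha_i x_i$ and $y = \sum_{j \in J} \beta_j y_j$ for appropriate convex combinations, the two inner-product terms equal $\sum_i \alpha_i \langle x_* - y_*, x_i - x_* \rangle$ and $\sum_j \beta_j \langle y_* - x_*, y_j - y_* \rangle$, both of which are nonnegative by \eqref{eq:SeparateOptimality}. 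Hence $\|x - y\|^2 \ge \|x_* - y_*\|^2$, i.e. $(x_*, y_*)$ is optimal.

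There is no genuine obstacle here; the mildly delicate point is simply organizing the backward direction so that the two separate variational inequalities combine into a single inequality on the joint objective. The expansion above handles this in one line, because the cross term $\langle x - x_*, y - y_* \rangle$ coming from $\|x - y\|^2$ is absorbed into the nonnegative square $\|(x - x_*) - (y - y_*)\|^2$, leaving only the two one-sided terms controlled by \eqref{eq:SeparateOptimality}.
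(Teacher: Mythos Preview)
Your proof is correct. The first stage and the forward direction match the paper exactly. For the backward direction, however, you take a slightly different (and more elementary) route than the paper. The paper argues that the two separate variational inequalities in \eqref{eq:SeparateOptimality} are jointly equivalent to the single inequality
\[
  \langle x_* - y_*, x - x_* \rangle + \langle y_* - x_*, y - y_* \rangle \ge 0
  \quad \forall (x, y) \in P \times Q,
\]
and then invokes the standard first-order optimality condition for convex programming to conclude. Your argument instead expands $\|x - y\|^2$ explicitly and observes that the cross term is absorbed into the nonnegative square $\|(x - x_*) - (y - y_*)\|^2$, so optimality follows directly from the algebra without appealing to any external optimality theorem. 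Both approaches are short; yours is fully self-contained, while the paper's makes the connection to the general convex-programming framework transparent.
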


\begin{proof}
The fact that conditions \eqref{eq:SeparateOptimality} are equivalent to the equalities $Pr_P(y_*) = x_*$ and 
$Pr_Q(x_*) = y_*$ follows directly from Prop.~\ref{prp:NearPoint_OptCond}. Let us check that these conditions are
equivalent 
to the optimality of $(x_*, y_*)$.

Indeed, as is easily seen, inequalities \eqref{eq:SeparateOptimality} are satisfied if and only if
\[
  \langle x_* - y_*, x - x_* \rangle \ge 0 \quad \forall x \in P, \quad
  \langle y_* - x_*, y - y_* \rangle \ge 0 \quad \forall y \in Q.
\]
In turn, these conditions are satisfied if and only if
\[
  \langle x_* - y_*, x - x_* \rangle + \langle y_* - x_*, y - y_* \rangle \ge 0
  \quad \forall x \in P, \: y \in Q. 
\] 
It remains to note that by standard optimality conditions for convex programming problems the above inequality
is equivalent to the optimality of $(x_*, y_*)$.
\end{proof}

Suppose that an algorithm $\mathcal{A}$ for solving the problem $(\mathcal{D})$ is given. For any two polytopes
$V, W \subset \mathbb{R}^d$, defined as the convex hulls of finite collections of points, it returns an optimal solution
$(v_*, w_*) = \mathcal{A}(V, W)$ of the problem
\[
  \min_{(v, w)} \: \| v - w \| \quad \text{subject to} \quad v \in V, \quad w \in W.
\]
We propose to accelerate this algorithm in precisely the same way as an algorithm for solving the nearest point problem.
Namely, one chooses ``small'' subpolytopes $P_0 \subset P$ and $Q_0 \subset Q$ and applies the algorithm $\mathcal{A}$
to find the distance between these subpolytopes. If an optimal solution of this problem coincides with an optimal
solution of the problem $(\mathcal{D})$ (this fact is verified via the optimality conditions from
Prop.~\ref{prp:PolytopesDistOptCond}), then the computations are terminated. Otherwise, one shifts these polytopes and 
repeats the same procedure till an optimal solution of the distance problem for subpolytopes $P_n$ and $Q_n$ coincides 
with an optimal solution of the problem $(\mathcal{D})$. A general structure of this acceleration technique
(meta-algorithm) is essentially the same as the structure of Meta-algorithm~\ref{alg:NearestPoint} and is given in 
Meta-algorithm~\ref{alg:PolytopesDistance}. For the sake of simplicity we suppose that the same exchange rule is used
for each polytope, and the subpolytopes $P_n$ and $Q_n$ are convex hulls of the same number of points.

\begin{algorithm}[t]  \label{alg:PolytopesDistance}
\caption{Meta-algorithm for finding the distance between two polytopes.}

\noindent\textbf{Input:} {two collection of points $\{ x_1, \ldots, x_{\ell} \} \subset \mathbb{R}^d$ and 
$\{ y_1, \ldots, y_m \} \subset \mathbb{R}^d$, an algorithm $\mathcal{A}$ for solving the problem of computing
the distance between two polytopes, parameters $s, q \in \{ 1, \ldots, \min\{ \ell, m \} \}$ with $s \ge q$, and 
an $(s, q)$-exchange rule $\mathcal{E}$.}

\noindent\textbf{Initialization:} {Put $n = 0$, choose index sets $I_n \subseteq I$ and $J_n \subseteq J$ with 
$|I_n| = |J_n| = s$. Define $P_n = \co\{ x_i \mid i \in I_n \}$ and $Q_n = \co\{ y_j \mid j \in J_n \}$.}

\noindent\textbf{Step 1.} {Compute $(v_n, w_n) = \mathcal{A}(P_n, Q_n)$ and
\[
  \rho_{xn} = \min_{i \in I} \langle v_n - w_n, x_i - v_n \rangle, \quad
  \rho_{yn} = \min_{j \in J} \langle w_n - v_n, y_j - w_n \rangle.
\]
If $\rho_{xn} \ge 0$ and $\rho_{yn} \ge 0$, \textbf{return} $(v_n, w_n)$.}

\noindent\textbf{Step 2.} {If $\rho_{xn} < 0$, compute $(I_{1n}, I_{2n}) = \mathcal{E}(I_n)$ and define
\[
  I_{n + 1} = \Big( I_n \setminus I_{1n} \Big) \cup I_{2n}, \quad 
  P_{n + 1} = \co\big\{ x_i \bigm| i \in I_{n + 1} \big\}.
\]
Otherwise, set $I_{n + 1} = I_n$ and $P_{n + 1} = P_n$.

If $\rho_{yn} < 0$, compute $(J_{1n}, J_{2n}) = \mathcal{E}(J_n)$ and define
\[
  J_{n + 1} = \Big( J_n \setminus J_{1n} \Big) \cup J_{2n}, \quad 
  Q_{n + 1} = \co\big\{ y_j \bigm| j \in J_{n + 1} \big\}.
\]
Otherwise, define $J_{n + 1} = J_n$ and $Q_{n + 1} = Q_n$. Put $n = n + 1$ and go to \textbf{Step 1}.}
\end{algorithm}

Arguing in essentially the same way as in the proofs of Lemmas~\ref{lem:DistDecay} and \ref{lem:SubpolytopeSize} one 
can verify that the following results hold true. These results can be viewed as criteria for choosing an efficient
exchange rule for Meta-algorithm~\ref{alg:PolytopesDistance}.

For any sets $A, B \subset \mathbb{R}^d$ denote by
\[
  \dist(A, B) = \inf\big\{ \| x - y \| \bigm| x \in A, \: y \in B \big\}
\]
the Euclidean distance between these sets.

\begin{lemma}
Suppose that the exchange rule $\mathcal{E}$ satisfies the distance decay condition for the problem $(\mathcal{D})$: 
if for some $n \in \mathbb{N}$ the pair $(v_n, w_n)$ does not satisfy the optimality conditions $\rho_{xn} \ge 0$ and 
$\rho_{yn} \ge 0$, then
\begin{equation} \label{eq:DistDecayCondition}
  \dist(P_{n + 1}, Q_{n + 1}) < \dist(P_n, Q_n).
\end{equation}
Then Meta-algorithm~\ref{alg:PolytopesDistance} terminates after a finite number of steps and returns an optimal
solution of the problem $(\mathcal{D})$.
\end{lemma}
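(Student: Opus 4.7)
The plan is to mirror the proof of Lemma~\ref{lem:DistDecay} almost verbatim, exploiting the fact that Proposition~\ref{prp:PolytopesDistOptCond} plays the same role for $(\mathcal{D})$ that Proposition~\ref{prp:NearPoint_OptCond} plays for $(\mathcal{P})$. The argument splits into two parts: (a) whenever the meta-algorithm terminates, the returned pair $(v_n, w_n)$ is globally optimal for $(\mathcal{D})$; (b) termination must occur after finitely many iterations.

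For part (a), I would observe that the termination criterion on Step~1 of Meta-algorithm~\ref{alg:PolytopesDistance} is exactly $\rho_{xn} \ge 0$ and $\rho_{yn} \ge 0$, which by the definitions of $\rho_{xn}$ and $\rho_{yn}$ is precisely the pair of inequalities \eqref{eq:SeparateOptimality} in Proposition~\ref{prp:PolytopesDistOptCond} evaluated at $(x_*, y_*) = (v_n, w_n)$. Since by construction $v_n \in P_n \subseteq P$ and $w_n \in Q_n \subseteq Q$, Proposition~\ref{prp:PolytopesDistOptCond} immediately yields global optimality of $(v_n, w_n)$ for $(\mathcal{D})$.

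For part (b), I would argue by contradiction. Suppose the meta-algorithm does not terminate in a finite number of steps, so it generates an infinite sequence of pairs of index sets $\{(I_n, J_n)\}_{n \in \mathbb{N}}$ with $|I_n| = |J_n| = s$. By the distance decay condition \eqref{eq:DistDecayCondition}, applied at every iteration, the sequence $\dist(P_n, Q_n)$ is strictly decreasing. In particular, the pairs of subpolytopes $(P_n, Q_n)$ must all be pairwise distinct, hence so must the pairs of index sets $(I_n, J_n)$. But the total number of such pairs is bounded above by $\binom{\ell}{s}\binom{m}{s}$, which is finite. This contradiction yields finite termination, and combined with (a) completes the proof.

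There is no genuine obstacle here: the argument is a routine transcription of the proof of Lemma~\ref{lem:DistDecay}, with the one nuance being that on any given iteration only one of the two index sets may actually change (if, say, $\rho_{xn} \ge 0$ but $\rho_{yn} < 0$). This is harmless, since strict decrease of $\dist(P_n, Q_n)$ still forces the ordered pair $(I_n, J_n)$ to be new, regardless of which component was updated.
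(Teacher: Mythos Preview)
Your proposal is correct and follows exactly the approach the paper intends: the paper does not give an explicit proof of this lemma, instead stating that one argues ``in essentially the same way as in the proofs of Lemmas~\ref{lem:DistDecay} and \ref{lem:SubpolytopeSize}'', which is precisely the transcription you have carried out. The only addition is your remark about the case where only one of the two index sets changes, which is a sensible clarification and causes no difficulty, as you note.
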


\begin{lemma}
Let an $(s, q)$-exchange rule $\mathcal{E}$ with $s \ge q$ satisfy the distance decay condition for the problem
$(\mathcal{D})$ for any polytopes $U, W \subset \mathbb{R}^d$. Then $s \ge d + 1$.
\end{lemma}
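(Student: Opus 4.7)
The plan is to reduce this statement to Lemma~\ref{lem:SubpolytopeSize} by collapsing the second polytope to a single point. I would proceed by contradiction, assuming $s \le d$, and exhibit a pair of polytopes $P$, $Q$ for which no $(s, q)$-exchange rule with $q \le s \le d$ can achieve the distance decay condition \eqref{eq:DistDecayCondition}.

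First, I would set $m = s$ and choose $y_1 = y_2 = \ldots = y_m = 0$, so that $Q = \{0\}$ and necessarily $Q_n = \{0\}$ for every $n \in \mathbb{N}$. With this choice, $w_n = 0$ identically, so $\rho_{yn} = 0$ for all $n$. Consequently the $Q$-side optimality check is automatic, the $Q$-side branch of Step~2 in Meta-algorithm~\ref{alg:PolytopesDistance} is never triggered, $Q_{n+1} = Q_n$ throughout, and the distance decay condition \eqref{eq:DistDecayCondition} degenerates into $\dist(0, P_{n + 1}) < \dist(0, P_n)$. At the same time, the remaining optimality requirement $\rho_{xn} \ge 0$ becomes exactly the nearest point optimality condition for the projection of the origin onto $P_n$, since the $v_n$ returned by $\mathcal{A}(P_n, Q_n)$ is nothing but $Pr_{P_n}(0)$.

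Next, I would choose $P$ verbatim from the proof of Lemma~\ref{lem:SubpolytopeSize}: namely, $\ell = 2$ with $x_1 = 1$ and $x_2 = -1$ when $d = 1$, and the $d+1$ explicitly listed points when $d \ge 2$. By that construction $0 \in P$, so $\dist(P, Q) = 0$, while any $s$ of the points $\{ x_1, \ldots, x_\ell \}$ are affinely independent, forcing $\dist(0, P_K) > 0$ whenever $|K| = s \le d$. Hence the stopping criterion of the meta-algorithm can never be satisfied for this data.

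To close the argument, I would invoke the previous lemma: if the exchange rule $\mathcal{E}$ satisfied the distance decay condition, Meta-algorithm~\ref{alg:PolytopesDistance} would be forced to terminate in finitely many iterations on this instance, contradicting the fact that the stopping criterion never fires. I do not anticipate any serious obstacle; the only items needing a sentence of verification are that $\mathcal{A}(P_n, Q_n)$ remains well-defined when $Q_n$ is a singleton polytope built from coincident points (the distance subproblem simply collapses to a standard nearest point problem on $P_n$) and that the $P$-side exchange is actually invoked at every step (because $\rho_{xn} < 0$ whenever $0 \notin P_n$, which holds throughout).
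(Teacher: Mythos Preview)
Your proposal is correct and follows essentially the same approach the paper indicates (it omits an explicit proof, saying only that one argues ``in essentially the same way'' as in Lemmas~\ref{lem:DistDecay} and~\ref{lem:SubpolytopeSize}). Collapsing $Q$ to the singleton $\{0\}$ so that the distance problem degenerates to the nearest point problem, and then reusing the explicit polytope from the proof of Lemma~\ref{lem:SubpolytopeSize}, is precisely the natural adaptation; your verification that $\rho_{yn}=0$ forces the $Q$-side branch to be inactive and that $\rho_{xn}<0$ whenever $0\notin P_n$ closes the argument cleanly.
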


As in the case of the nearest point problem, one can utilise the steepest descent exchange rule to shift polytopes $P_n$
and $Q_n$ on each iteration of Meta-algorithm~\ref{alg:PolytopesDistance}. In the case of the problem $(\mathcal{D})$
this exchange rule is defined as follows (for the sake of shortness we describe it only for the polytope
$P_n$).

\begin{itemize}
\item{\textbf{Input:} an index set $I_n \subset I$ with $|I_n| = d + 1$, the set $\{ x_1, \ldots, x_{\ell} \}$, and
the pair $(v_n, w_n) = \mathcal{A}(P_n, Q_n)$.
}

\item{\textbf{Step 1:} Find $i_{1n} \in I_n$ such that $v_n \in \co\{ x_i \mid I_n \setminus \{ i_{1n} \} \}$.}

\item{\textbf{Step 2:} Find $i_{2n} \in I$ such that
\[
  \langle v_n - w_n, x_{i_{2n}} \rangle = \min_{i \in I} \langle v_n - w_n, x_i \rangle.
\]
\textbf{Return} $(\{ i_{1n} \}, \{ i_{2n} \})$.}
\end{itemize}

The following theorem, whose prove is similar to the proof of Theorem~\ref{thm:StDescExchRule}, shows that the steepest
descent exchange rule satisfies the distance decay condition for the problem $(\mathcal{D})$.

\begin{theorem}
For any polytopes $P = \co\{ x_1, \ldots, x_{\ell} \}$ and $Q = co\{ y_1, \ldots, y_m \}$ with $\ell \ge d + 1$ and
$m \ge d + 1$ the steepest descent exchange rule satisfies the distance decay condition for the problem
$(\mathcal{D})$.
\end{theorem}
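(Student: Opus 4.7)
My plan is to mimic the one-polytope argument of Theorem~\ref{thm:StDescExchRule}, but now perturbing one or both of the current candidate points $v_n$ and $w_n$ depending on which of the two optimality inequalities fails. Fix $n\in\mathbb{N}$ at which the stopping criterion of Meta-algorithm~\ref{alg:PolytopesDistance} is violated, so at least one of $\rho_{xn}<0$ or $\rho_{yn}<0$ holds. As in the proof of Theorem~\ref{thm:StDescExchRule}, the projection structure forces $v_n$ to lie on a face of $P_n$ of dimension at most $d-1$, so there exists $i_{1n}\in I_n$ with $v_n\in\co\{x_i\mid i\in I_n\setminus\{i_{1n}\}\}$, and symmetrically for $w_n$ and some $j_{1n}\in J_n$. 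Consequently $v_n\in P_{n+1}$ and $w_n\in Q_{n+1}$ regardless of whether the exchange is performed on one side or on both.

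Next I would introduce the one-parameter paths $v_n(t)=(1-t)v_n+t x_{i_{2n}}\in P_{n+1}$ for $t\in[0,1]$ (and $v_n(0)=v_n$ if $\rho_{xn}\ge 0$, in which case $I_{n+1}=I_n$), and similarly $w_n(s)=(1-s)w_n+s y_{j_{2n}}\in Q_{n+1}$ for $s\in[0,1]$. Define
\[
  g(t,s)=\|v_n(t)-w_n(s)\|^2.
\]
A direct expansion gives
\[
  \partial_t g(0,0)=2\langle v_n-w_n,x_{i_{2n}}-v_n\rangle=2\rho_{xn},\qquad
  \partial_s g(0,0)=2\langle w_n-v_n,y_{j_{2n}}-w_n\rangle=2\rho_{yn},
\]
using that $i_{2n}$ and $j_{2n}$ are the minimizers defining $\rho_{xn}$ and $\rho_{yn}$ in Step~1 of Meta-algorithm~\ref{alg:PolytopesDistance}.

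Now I would split according to which inequalities are violated. If $\rho_{xn}<0$ and $\rho_{yn}\ge 0$, then $Q_{n+1}=Q_n$ and $w_n(s)\equiv w_n$, so the single-variable function $t\mapsto g(t,0)$ has negative derivative at $0$ and therefore satisfies $g(t,0)<g(0,0)$ for all sufficiently small $t\in(0,1]$; the symmetric case $\rho_{xn}\ge 0,\ \rho_{yn}<0$ is identical. If both $\rho_{xn}<0$ and $\rho_{yn}<0$, both partial derivatives of $g$ at $(0,0)$ are strictly negative, so for sufficiently small $t,s\in(0,1]$ one has $g(t,s)<g(0,0)$. In every case, picking such $t,s$ yields
\[
  \dist(P_{n+1},Q_{n+1})\le\|v_n(t)-w_n(s)\|=\sqrt{g(t,s)}<\sqrt{g(0,0)}=\|v_n-w_n\|=\dist(P_n,Q_n),
\]
which is the required decay \eqref{eq:DistDecayCondition}.

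No step is genuinely hard; the only minor subtlety is the argument that $v_n$ and $w_n$ survive the exchange (i.e.\ lie in $P_{n+1}$ and $Q_{n+1}$ respectively), which reduces to the face-dimension observation already used in Subsection~\ref{subsect:ExchangeRule} for the one-polytope case. Once that is in place, the bilinear-in-$(t,s)$ expansion of $g$ and the negativity of its first-order derivatives handle all three cases uniformly.
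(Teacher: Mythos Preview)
Your proposal is correct and follows essentially the same approach as the paper's proof: a three-case split according to which of $\rho_{xn},\rho_{yn}$ is negative, the membership $v_n\in P_{n+1}$, $w_n\in Q_{n+1}$ via the face-dimension argument, and the line-segment (respectively, bilinear) perturbation yielding strict decrease of $\|v_n(t)-w_n(s)\|^2$ near the origin. The only cosmetic difference is that in the mixed cases $\rho_{xn}<0\le\rho_{yn}$ (and symmetrically) the paper explicitly invokes $v_n=Pr_{P_n}(w_n)$ and reduces to the one-polytope Theorem~\ref{thm:StDescExchRule}, whereas you handle it directly as the one-variable derivative $\partial_t g(0,0)=2\rho_{xn}<0$; these are the same computation.
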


\begin{proof}
Suppose that for some $n \in \mathbb{N}$ the pair $(v_n, w_n)$ does not satisfy the stopping criterion 
$\rho_{xn} \ge 0$ and $\rho_{yn} \ge 0$ from Meta-algorithm~\ref{alg:PolytopesDistance}. Let us consider three cases.

\textbf{Case I.} Let $\rho_{xn} < 0$ and $\rho_{yn} < 0$. Denote by 
\[
  P_n^0 = \co\Big\{ x_i \Bigm| i \in I_n \setminus \{ i_{1n} \} \Big\}, \quad
  Q_n^0 = \co\Big\{ y_j \Bigm| j \in J_n \setminus \{ j_{1n} \} \Big\}
\]
the polytopes obtained from $P_n$ and $Q_n$ after removing the points selected by the steepest descent exchange rule.
By the definition of this rule $v_n \in P_n^0$ and $w_n \in Q_n^0$ and, moreover,
\begin{equation} \label{eq:DoubleGradAtNewPoint}
  \langle v_n - w_n, x_{i_{2n}} - v_n \rangle = \rho_{xn} < 0, \quad
  \langle w_n - v_n, y_{i_{2n}} - w_n \rangle = \rho_{yn} < 0.
\end{equation}
Recall also that $P_{n + 1} = \co\{ P_n^0, x_{i_{2n}} \}$ and $Q_{n + 1} = \co\{ Q_n^0, y_{j_{2n}} \}$.

Introduce the vectors
\[
  x_n(t) = (1 - t) v_n + t x_{i_{2n}}, \quad y_n(\tau) = (1 - \tau) w_n + \tau y_{j_{2n}}
\]
and the function $f(t, \tau) = \| x_n(t) - y_n(\tau) \|^2$. Clearly, $x_n(t) \in P_{n + 1}$ for any $t \in [0, 1]$
and $y_n(\tau) \in Q_{n + 1}$ for any $\tau \in [0, 1]$. In addition, one has
\begin{align*}
  f(t, \tau) &= \| v_n - w_n + t (x_{i_{2n}} - v_n) - \tau (y_{j_{2n}} - w_n) \|^2
  \\
  &= \| v_n - w_n \|^2 + 2 t \rho_{xn} + 2 \tau \rho_{yn} - 2 t \tau \langle x_{i_{2n}} - v_n, y_{j_{2n}} - w_n \rangle
  \\
  &+ t^2 \| x_{i_{2n}} - v_n \| + \tau^2 \| y_{j_{2n}} - w_n \|^2.
\end{align*}
From inequalities \eqref{eq:DoubleGradAtNewPoint} it follows that for any sufficiently small $t \in (0, 1)$ and 
$\tau \in (0, 1)$ one has $f(t, \tau) < f(0, 0)$. Therefore for any such $t$ and $\tau$ one has
\begin{align*}
  \dist(P_{n + 1}, Q_{n + 1}) &\le \| x_n(t) - y_n(\tau) \| = \sqrt{f(t, \tau)} 
  \\
  &< \sqrt{f(0, 0)} = \| v_n - w_n \| = \dist(P_n, Q_n),
\end{align*}
which means that the distance decay condition for the problem $(\mathcal{D})$ holds true.

\textbf{Case II.} Suppose that $\rho_{xn} < 0$, but $\rho_{yn} \ge 0$. By definition $(v_n, w_n) \in P_n \times Q_n$ 
is an optimal solution of the problem
\[
  \min_{(x, y)} \| x - y \| \quad \text{subject to} \quad x \in P_n, \quad y \in Q_n.
\]
Hence by Proposition~\ref{prp:PolytopesDistOptCond} one has $Pr_{P_n}(w_n) = v_n$. Moreover, by our assumption one has
\[
  \min_{i \in I} \langle v_n - w_n, x_i - v_n \rangle < 0,
\]
that is, $v_n$ does not satisfy the optimality condition for the nearest point problem $\min_{x \in P_n} \| x - w_n \|$.
Therefore, almost literally repeating the proof of Theorem~\ref{thm:StDescExchRule} one gets that
\[
  \dist(w_n, P_{n + 1}) < \dist(w_n, P_n).
\]
Hence bearing in mind the facts that $Q_{n + 1} = Q_n$, $w_n \in Q_n$, and $\dist(w_n, P_n) = \dist(P_n, Q_n)$ one
obtains
\[
  \dist(P_{n + 1}, Q_{n + 1}) \le \dist(w_n, P_{n + 1}) < \dist(w_n, P_n) = \dist(P_n, Q_n).
\]
Thus, the distance decay condition for the problem $(\mathcal{D})$ holds true.

\textbf{Case III.} Suppose that $\rho_{yn} < 0$, but $\rho_{xn} \ge 0$. The proof of this case almost literally
repeats the proof of Case II. 
\end{proof}

\begin{corollary}
Let $\ell, m \ge d + 1$. Then Meta-algorithm~\ref{alg:PolytopesDistance} with $s = d + 1$, $q = 1$, and the steepest
descent exchange rule terminates after a finite number of iterations and returns an optimal solution of the problem
$(\mathcal{D})$.
\end{corollary}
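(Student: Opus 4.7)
The plan is to observe that the corollary is an immediate consequence of the two results preceding it, so essentially no new work is needed beyond assembling them.

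First I would note that with the parameter choice $s = d+1$ and $q = 1$, the steepest descent exchange rule described in this section is a well-defined $(d+1, 1)$-exchange rule: at each iteration where the stopping test fails, the pair $(v_n, w_n)$ returned by $\mathcal{A}(P_n, Q_n)$ lies in the boundary of $P_n \times Q_n$ (otherwise the separate optimality conditions in Prop.~\ref{prp:PolytopesDistOptCond} would be satisfied), so the arguments from Subsection~\ref{subsect:ExchangeRule} (via $\cite[Lemma~2.8]{Ziegler}$) guarantee that on whichever side the violation occurs, the corresponding projection point lies in the convex hull of at most $d$ of the generating points, and an index $i_{1n}$ (resp. $j_{1n}$) can be removed.

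Next I would invoke the theorem just proved, which asserts that the steepest descent exchange rule satisfies the distance decay condition \eqref{eq:DistDecayCondition} for the problem $(\mathcal{D})$ whenever $\ell, m \ge d + 1$. This gives exactly the hypothesis of the lemma stated immediately after Meta-algorithm~\ref{alg:PolytopesDistance}, which asserts finite termination and correctness under the distance decay condition. Combining these two yields the conclusion directly.

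There is no real obstacle to overcome: the only thing one might want to spell out is the finiteness argument in the decay-based lemma adapted to pairs. Namely, the distance decay condition forces the sequence of \emph{pairs} $(I_n, J_n)$ to be pairwise distinct (because equal index sets would give equal subpolytopes and hence equal distances), and there are only $\binom{\ell}{d+1}\binom{m}{d+1}$ such pairs, so the process must terminate; upon termination, the stopping test $\rho_{xn} \ge 0$ and $\rho_{yn} \ge 0$ is the optimality condition of Prop.~\ref{prp:PolytopesDistOptCond} applied to $(v_n, w_n) \in P \times Q$, so $(v_n, w_n)$ solves $(\mathcal{D})$.
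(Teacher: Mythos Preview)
Your proposal is correct and matches the paper's approach: the corollary is stated without proof in the paper precisely because it follows immediately by combining the preceding theorem (the steepest descent exchange rule satisfies the distance decay condition for $(\mathcal{D})$) with the lemma following Meta-algorithm~\ref{alg:PolytopesDistance} (distance decay implies finite termination and correctness). Your added remarks on well-definedness of the rule and the finiteness count over pairs $(I_n,J_n)$ are the natural elaborations; the only minor imprecision is the phrase ``lies in the boundary of $P_n\times Q_n$'' --- the relevant fact, as the paper notes after the theorem, is that if the stopping test fails then $P_n\cap Q_n=\emptyset$, so $v_n$ (resp.\ $w_n$) lies on the boundary of $P_n$ (resp.\ $Q_n$) individually, which is what allows an index to be dropped on the violated side.
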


As in the case of the nearest point problem, it is easier to implement the steepest descent exchange rule for 
the problem $(\mathcal{D})$, when the algorithm $\mathcal{A}$ returns not an optimal solution 
$(v_*, w_*) = \mathcal{A}(V, W)$ of the problem
\[
  \min \: \| v - w \| \quad \text{subject to} \quad 
  V = \co\{ v_1, \ldots, v_{\ell_1} \}, \quad W = \co\{ w_1, \ldots, w_{\ell_2} \},
\]
but coefficients of the corresponding convex combinations, that is, vectors $\alpha \in \mathbb{R}^{\ell_1}$
and $\beta \in \mathbb{R}^{\ell_2}$ such that
\begin{align*}
  v_* &= \sum_{i = 1}^{\ell_1} \alpha^{(i)} v_i, \quad \sum_{i = 1}^{\ell_1} \alpha^{(i)} = 1, \quad
  \alpha^{(i)} \ge 0 \quad \forall i \in \{ 1, \ldots, \ell_1 \},
  \\
  w_* &= \sum_{j = 1}^{\ell_2} \beta^{(j)} w_j, \quad \sum_{j = 1}^{\ell_2} \beta^{(j)} = 1, \quad
  \beta^{(j)} \ge 0 \quad \forall j \in \{ 1, \ldots, \ell_2 \}.
\end{align*}
In this case, if on iteration $n$ of Meta-algorithm~\ref{alg:PolytopesDistance} one computes a pair of coefficients
of convex combinations $(\alpha_n, \beta_n) = \mathcal{A}(P_n, Q_n)$, then one can obviously choose as an index 
$i_{1n} \in I_n$ of vector $x_{i_{1n}}$ that is removed from $P_n$ any index $k \in I_n$ such that 
$\alpha_n^{(k)} = 0$. Similarly, one can choose as an index $j_{1n}$ of a vector $y_{j_{1n}}$ that is removed from $Q_n$
any index $k \in J_n$ such that $\beta_n^{(k)} = 0$.

If the polytopes $P_n$ and $Q_n$ intersect, then Meta-algorithm~\ref{alg:PolytopesDistance} terminates on iteration
$n$. If they do not intersect, then the points $v_n$ and $w_n$ obviously lie on the boundaries of $P_n$ and $Q_n$
respectively. Hence by \cite[Lemma~2.8]{Ziegler} in the case when the points $x_i$, $i \in I_n$, are affinely
independent, there exists at least one $k \in I_n$ such that $\alpha_n^{(k)} = 0$. Similarly, in the case when the
points $y_j$, $j \in J_n$, are affinely independent, there exists at least one $k \in J_n$ such that 
$\beta_n^{(k)} = 0$. In this case one can easily find the required indices $i_{1n} \in I_n$ and $j_{1n} \in J_n$.

If either $x_i$, $i \in I_n$, or $y_j$, $j \in J_n$, are affinely dependent, then one can utilise the following obvious
extension of \textit{the index removal method} from Section~\ref{subsect:ExchangeRule}. For the sake of shortness we
describe in only in the case when $\rho_{xn} < 0$ and $\rho_{yn} < 0$.

\begin{itemize}
\item{\textbf{Input:} index sets $I_n \subset I$, $J_n \subset J$ with $|I_n| = |J_n| = d + 1$, the sets 
$\{ x_1, \ldots, x_{\ell} \}$ and $\{ y_1, \ldots, y_m \}$, and $(\alpha_n, \beta_n) = \mathcal{A}(P_n, Q_n)$.
}

\item{\textbf{Step 1:} Compute $\alpha_{\min} = \min_{i \in I_n} \alpha_n^{(i)}$. If $\alpha_{\min} = 0$,
find $i_{1n} \in I_n$ such that $\alpha_n^{(i_{1n})} = 0$. Otherwise, choose any $k \in I_n$ and compute a least-squares
solution $\gamma_n$ of the system
\[
  \sum_{i \in I_n \setminus \{ k \}} \gamma^{(i)} (x_i - x_k) = 0, \quad 
  \sum_{i \in I_n \setminus \{ k \}} \gamma^{(i)} = 1
\]
and set $\gamma_n^{(k)} = - 1$. Find an index $i_{1n} \in I_n$ on which the minimum in
\[
  \min\left\{ - \frac{\alpha_n^{(i)}}{\gamma_n^{(i)}} \Biggm| 
  i \in I_n \colon \gamma_n^{(i)} < 0 \right\}
\]
is attained.
}

\item{\textbf{Step 2:} Compute $\beta_{\min} = \min_{j \in J_n} \beta_n^{(i)}$. If $\beta_{\min} = 0$,
find $j_{1n} \in J_n$ such that $\beta_n^{(j_{1n})} = 0$. Otherwise, choose any $k \in J_n$ and compute a least-squares
solution $\lambda_n$ of the system
\[
  \sum_{j \in J_n \setminus \{ k \}} \lambda^{(j)} (y_j - y_k) = 0, \quad 
  \sum_{j \in J_n \setminus \{ k \}} \lambda^{(j)} = 1
\]
and set $\lambda_n^{(k)} = - 1$. Find an index $j_{1n} \in J_n$ on which the minimum in
\[
  \min\left\{ - \frac{\beta_n^{(j)}}{\lambda_n^{(j)}} \Biggm| 
  j \in J_n \colon \lambda_n^{(j)} < 0 \right\}
\]
is attained and \textbf{return} $(i_{1n}, j_{1n})$.
}
\end{itemize}

Arguing in precisely the same way as in the proof of Proposition~\ref{prp:IndexRemovalMethod} one can verify that the
index removal method correctly finds the required indices $i_{1n} \in I_n$ and $j_{1n} \in J_n$.

\begin{proposition}
Suppose that for some $n \in \mathbb{N}$ the stopping criterion $\rho_{xn} \ge 0$ and $\rho_{yn} \ge 0$ of
Meta-algorithm~\ref{alg:PolytopesDistance} does not hold true, and let $(i_{1n}, j_{1n}) \in I_n \times J_n$ be the
output of the index removal method. Then 
\[
  v_n \in \co\Big\{ x_i \Bigm| i \in I_n \setminus \{ i_{1n} \} \Big\}, \quad
  w_n \in \co\Big\{ y_j \Bigm| j \in J_n \setminus \{ j_{1n} \} \Big\}.
\]
\end{proposition}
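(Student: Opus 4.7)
The plan is to reduce the proposition to two independent applications of (essentially) Prop.~\ref{prp:IndexRemovalMethod}, one for each polytope, since the two conclusions decouple completely. I describe the argument only for the $P_n$ side (producing $i_{1n}$ with $v_n \in \co\{x_i \mid i \in I_n \setminus \{i_{1n}\}\}$); the $Q_n$ side follows by an identical argument with $v_n, w_n, P_n, I_n, \alpha_n$ replaced by $w_n, v_n, Q_n, J_n, \beta_n$, using the fact that $w_n = Pr_{Q_n}(v_n)$ and $\rho_{yn} < 0$.

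The preliminary geometric step is to verify that $v_n$ lies on the boundary of $P_n$ in $\mathbb{R}^d$. Since $(v_n, w_n) = \mathcal{A}(P_n, Q_n)$, Prop.~\ref{prp:PolytopesDistOptCond} gives $v_n = Pr_{P_n}(w_n)$; if $w_n$ belonged to $P_n$ we would have $v_n = w_n$ and consequently $\rho_{xn} = 0$, contradicting the hypothesis $\rho_{xn} < 0$. Hence $w_n \notin P_n$, and $v_n$ lies on the boundary of $P_n$ in $\mathbb{R}^d$ as the projection of an external point onto a closed convex set.

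With this in hand I distinguish the two branches of the index removal method. If $\alpha_{\min} = 0$, then $i_{1n}$ is chosen with $\alpha_n^{(i_{1n})} = 0$, and the representation $v_n = \sum_{i \in I_n} \alpha_n^{(i)} x_i$ directly exhibits $v_n$ as a convex combination of the remaining points. If $\alpha_{\min} > 0$, then $v_n$ is a strict convex combination of the $d+1$ points $x_i$, $i \in I_n$, hence lies in the relative interior of $P_n$; combined with $v_n \in \partial P_n$, this forces $\aff\{x_i \mid i \in I_n\}$ to be a proper affine subspace of $\mathbb{R}^d$, i.e., the points $\{x_i \mid i \in I_n\}$ are affinely dependent.

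In the latter case, the system $\sum_{i \in I_n \setminus \{k\}} \gamma^{(i)}(x_i - x_k) = 0$ together with the normalization $\sum_{i \in I_n \setminus \{k\}} \gamma^{(i)} = 1$ is consistent (despite being overdetermined), so its least-squares solution $\gamma_n$ satisfies both equations exactly; extending by $\gamma_n^{(k)} = -1$ yields $\sum_{i \in I_n} \gamma_n^{(i)} x_i = 0$ and $\sum_{i \in I_n} \gamma_n^{(i)} = 0$. The quantity $\lambda = \min\{-\alpha_n^{(i)}/\gamma_n^{(i)} \mid i \in I_n,\ \gamma_n^{(i)} < 0\}$ is well-defined and strictly positive (since $\alpha_{\min} > 0$). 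Setting $\xi_n^{(i)} = \alpha_n^{(i)} + \lambda\gamma_n^{(i)}$ produces nonnegative coefficients summing to $1$ with $\xi_n^{(i_{1n})} = 0$ at the minimizing index and $\sum_{i \in I_n} \xi_n^{(i)} x_i = v_n$, exactly as in the proof of Prop.~\ref{prp:IndexRemovalMethod}. Hence $v_n \in \co\{x_i \mid i \in I_n \setminus \{i_{1n}\}\}$. The only step requiring any genuine new work relative to the single-polytope case is the boundary argument in the second paragraph; once $v_n \in \partial P_n$ (and analogously $w_n \in \partial Q_n$) is established, everything else is a transcription of the earlier proof, which is precisely what the author alludes to.
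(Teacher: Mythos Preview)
Your proof is correct and is precisely the argument the paper has in mind: the author omits the proof entirely, writing only that one argues ``in precisely the same way as in the proof of Proposition~\ref{prp:IndexRemovalMethod},'' and the surrounding text already records the one extra ingredient you supply---namely, that if $P_n$ and $Q_n$ do not intersect (equivalently, if $\rho_{xn}<0$ or $\rho_{yn}<0$), then $v_n$ and $w_n$ lie on the boundaries of $P_n$ and $Q_n$ respectively. Your reduction of this boundary fact to $v_n = Pr_{P_n}(w_n)$ with $w_n \notin P_n$ is clean and matches the paper's reasoning.
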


\begin{remark}
Let us note that in the case when the number of points in only one polytope is much greater than $d$ (say $\ell \gg d$),
while for the other polytope it is comparable to $d$ or even smaller than the dimension of the space, one can propose a
natural modification of the acceleration technique presented in this section. Namely, instead of shifting subpolytopes
$P_n$ and $Q_n$ in both polytopes $P$ and $Q$ one needs to shift only polytope $P_n$ inside $P$ and define 
$Q_n \equiv Q$. An analysis of such modification of Meta-algorithm~\ref{alg:PolytopesDistance} is straightforward and is
left to the interested reader.
\end{remark}

\subsection{A robust version of the meta-algorithm}
\label{subsect:RobustDistance}

Let us also present a robust version of Meta-algorithm~\ref{alg:PolytopesDistance} that takes into account finite
precision of computations and is more suitable for practical implementation than the original method. To this end,
as in Subsection~\ref{subsect:RobustNearestPoint}, suppose that instead of the ``ideal'' algorithm $\mathcal{A}$ its
``approximate'' version $\mathcal{A}_{\varepsilon}$, $\varepsilon > 0$, is given. For any two polytopes 
$V, W \subset \mathbb{R}^d$ the algorithm $\mathcal{A}_{\varepsilon}$ return an approximate (in some sense) solution of
the problem
\[
  \min_{(x, y)} \| x - y \| \quad \text{subject to} \quad x \in V, \: y \in W.
\]
To ensure finite termination of the acceleration technique based on the ``approximate'' algorithm
$\mathcal{A}_{\varepsilon}$ one obviously needs to replace the optimality conditions for the problem $(\mathcal{D})$
(see Prop.~\ref{prp:PolytopesDistOptCond}), which are used as a stopping criterion, with approximate optimality
conditions of the form
\[
  \langle v_* - w_*, x_i - v_* \rangle \ge - \eta \quad \forall i \in I, \quad
  \langle w_* - v_*, y_j - w_* \rangle \ge - \eta \quad \forall j \in J
\]
with some small $\eta > 0$. The following proposition shows how these approximate optimality conditions are related to 
approximate optimality of $(v_*, w_*)$. 

\begin{proposition} \label{prp:ApproximateOptimalityDistance}
Let $(v_*, w_*)$ be an optimal solution of the problem $(\mathcal{D})$ and a pair $(v, w) \in P \times Q$ satisfy the
inequalities
\begin{equation} \label{eq:SubOptimCondDist}
  \langle v - w, x_i - v \rangle \ge - \eta \quad \forall i \in I, \quad
  \langle w - v, y_j - w \rangle \ge - \eta \quad \forall j \in J
\end{equation}
for some $\eta > 0$. Then 
\[
  \| v - w - (v_* - w_*) \| \le \sqrt{2 \eta}, \quad
  \| v - w \| \le \dist(P, Q) + \sqrt{2\eta}. 
\]
Conversely, let $(v, w) \in P \times Q$ be such that $\| v - w - (v_* - w_*) \| \le \varepsilon$ for some 
$\varepsilon > 0$. Then the pair $(v, w)$ satisfies inequalities \eqref{eq:SubOptimCondDist} for any
$\eta > 0$ such that $\eta \ge (\diam(P) + \diam(Q) + \dist(P, Q)) \varepsilon$.
\end{proposition}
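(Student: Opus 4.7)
The plan is to mirror the proof of Proposition~\ref{prp:ApproximateOptimality}, exploiting the fact that the distance problem $(\mathcal{D})$ is equivalent to the nearest point problem $\min_{p \in P - Q} \|p\|$ on the Minkowski difference $P - Q$. Writing $p = v - w$ and $p_* = v_* - w_*$, the quantity to be estimated is $\|p - p_*\|$, and by Proposition~\ref{prp:PolytopesDistOptCond} the point $p_*$ is the projection of the origin onto $P - Q$. One readily checks that $\diam(P - Q) \le \diam(P) + \diam(Q)$ and $\dist(0, P - Q) = \dist(P, Q)$, which will govern all the constants.

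For the first direction I would write
\[
  \|p - p_*\|^2 = \langle p - p_*, p \rangle - \langle p - p_*, p_* \rangle
\]
and treat the two pieces separately. Expanding $p - p_* = (v - v_*) - (w - w_*)$ and applying the optimality conditions \eqref{eq:SeparateOptimality} at $x = v \in P$ and $y = w \in Q$ yields $\langle v_* - w_*, v - v_* \rangle \ge 0$ and $\langle w_* - v_*, w - w_* \rangle \ge 0$, which combine to give $\langle p - p_*, p_* \rangle \ge 0$. For the remaining term, expanding $v_* = \sum_i \alpha_i x_i$ and $w_* = \sum_j \beta_j y_j$ as convex combinations and invoking \eqref{eq:SubOptimCondDist} summand by summand produces $\langle v - v_*, v - w \rangle \le \eta$ and $\langle w - w_*, w - v \rangle \le \eta$, hence $\langle p - p_*, p \rangle \le 2\eta$. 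Combining, $\|p - p_*\|^2 \le 2\eta$, which is the first claimed inequality; the second then follows from the triangle inequality and $\|p_*\| = \dist(P, Q)$.

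For the converse, the cleanest route is to invoke Proposition~\ref{prp:ApproximateOptimality} for the nearest point problem on $P - Q$ with $z = 0$. The hypothesis $\|p - p_*\| \le \varepsilon$ is exactly the assumption of Part~2 of that proposition, which yields $\langle p, q - p \rangle \ge -\eta$ for every $q \in P - Q$ whenever $\eta \ge (\diam(P - Q) + \dist(P, Q))\varepsilon$, and this is implied by the stated bound $\eta \ge (\diam(P) + \diam(Q) + \dist(P, Q))\varepsilon$. Specializing $q = x_i - w \in P - Q$ collapses the contribution from $Q$ and recovers $\langle v - w, x_i - v \rangle \ge -\eta$; specializing $q = v - y_j$ recovers $\langle w - v, y_j - w \rangle \ge -\eta$, giving both halves of \eqref{eq:SubOptimCondDist}.

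The main obstacle is the sign bookkeeping in the forward direction: both pieces of $\langle p - p_*, p_* \rangle$ must be shown nonnegative using the two halves of \eqref{eq:SeparateOptimality} separately, and both convex combination expansions of $v_*$ and $w_*$ are needed to produce the factor of $2$ in $\sqrt{2\eta}$. A direct (non-reformulated) proof of the converse appears to fail, because under the hypothesis $\|p - p_*\| \le \varepsilon$ the individual displacements $\|v - v_*\|$ and $\|w - w_*\|$ need not be small; passing through the Minkowski difference is therefore essential to keep the constants linear in $\varepsilon$.
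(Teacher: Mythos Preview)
Your proof is correct and essentially matches the paper's: the forward direction is the same decomposition $\|p - p_*\|^2 = \langle p - p_*, p\rangle - \langle p - p_*, p_*\rangle$, and for the converse the paper carries out directly on $\langle v - w, x - y - (v - w)\rangle$ the identical three-term splitting that your invocation of Proposition~\ref{prp:ApproximateOptimality} on $P - Q$ produces (then specializes $y = w$ and $x = v$ exactly as you do). Your closing remark is therefore slightly off: the paper's argument \emph{is} direct, does not pass through $P - Q$ explicitly, and never needs $\|v - v_*\|$ or $\|w - w_*\|$ to be small individually, only the combined quantity $\|(v - w) - (v_* - w_*)\|$.
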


\begin{proof}
Let a pair $(v, w) \in P \times Q$ satisfy inequalities \eqref{eq:SubOptimCondDist} for some $\eta > 0$ and
$(v_*, w_*)$ be an optimal solution of the problem $(\mathcal{D})$. Observe that
\[
  \| v - w - (v_* - w_*) \|^2 = \langle v - w, v - v_* \rangle
  + \langle w - v, w - w_* \rangle 
  - \langle v_* - w_*, v - w - (v_* - w_*) \rangle.
\]
By Proposition~\ref{prp:PolytopesDistOptCond} one has
\[
  \langle v_* - w_*, x -  y - (v_* - w_*) \rangle \ge 0 \quad \forall x \in P, \: y \in Q,
\]
while from inequalities \eqref{eq:SubOptimCondDist} it obviously follows that
\[
  \langle v - w, x - v \rangle \ge - \eta \quad \forall x \in P, \quad
  \langle w - v, y - w \rangle \ge - \eta \quad \forall y \in Q.
\]
Therefore
\[
  \| v - w - (v_* - w_*) \|^2 \le 2 \eta,
\]
which yields
\[
  \Big| \| v - w \| - \| v_* - w_* \| \Big| \le \| v - w - (v_* - w_*) \| \le \sqrt{2 \eta}
\]
or, equivalently, $\| v - w \| \le \dist(P, Q) + \sqrt{2 \eta}$, since $\| v_* - w_* \| = \dist(P, Q)$.

Suppose now that $\| v - w - (v_* - w_*) \| \le \varepsilon$ for some $\varepsilon > 0$ and $(v_*, w_*)$ is an optimal
solution of the problem $(\mathcal{D})$. Adding and subtracting $v_* - w_*$ twice one gets that for any
$(x, y) \in P \times Q$ the following equality holds true:
\begin{multline*}
  \langle v - w, x - y - (v - w) \rangle =
  \langle v - w - (v_* - w_*), x - y - (v - w) \rangle 
  \\
  + \langle v_* - w_*, x - y - (v_* - w_*) \rangle
  + \langle v_* - w_*, v_* - w_* - (v - w) \rangle.
\end{multline*}
Note that the second term on the right-hand side of this equality is nonnegative by
Prop.~\ref{prp:PolytopesDistOptCond},
while the first and the last terms can be estimated as follows:
\begin{multline*}
  \big| \langle v - w - (v_* - w_*), x - y - (v - w) \rangle \big|
  \\
  \le \| v - w - (v_* - w_*) \| \Big( \| x - v \| + \| y - w \| \Big)
  \le \varepsilon \big( \diam(P) + \diam(Q) \big),
\end{multline*}
\[
  \big| \langle v_* - w_*, v_* - w_* - (v - w) \rangle \big|
  \le \| v_* - w_* \| \| v_* - w_* - (v - w) \| \le \dist(P, Q) \varepsilon.
\]
Consequently, one has
\[
  \langle v - w, x - y - (v - w) \rangle \ge - \Big( \diam(P) + \diam(Q) + \dist(P, Q) \Big) \varepsilon,
\]
and the proof is complete.
\end{proof}

To formulate an implementable robust version of Meta-algorithm~\ref{alg:PolytopesDistance}, suppose that the output 
$\mathcal{A}_{\varepsilon}(P_n, W_n)$ of the algorithm $\mathcal{A}_{\varepsilon}$ is not an approximate optimal
solution $(v_n(\varepsilon), w_n(\varepsilon))$ of the problem
\[
  \min_{(x, y)} \| x - y \| \quad \text{subject to} \quad x \in P_n, \quad y \in Q_n,
\]
but rather coefficients of the corresponding convex combinations, that is, a pair 
$(\alpha_n(\varepsilon), \beta_n(\varepsilon)) \in \mathbb{R}^{d + 1} \times \mathbb{R}^{d + 1}$ such that
\begin{gather*}
  v_n(\varepsilon) = \sum_{i \in I_n} \alpha_n^{(i)}(\varepsilon) x_i, \quad
  \sum_{i \in I_n} \alpha_n^{(i)}(\varepsilon) = 1, \quad 
  \alpha_n^{(i)}(\varepsilon) \ge 0 \quad \forall i \in I_n,
  \\
  w_n(\varepsilon) = \sum_{j \in J_n} \beta_n^{(j)}(\varepsilon) y_j, \quad
  \sum_{j \in J_n} \beta_n^{(j)}(\varepsilon) = 1, \quad 
  \beta_n^{(j)}(\varepsilon) \ge 0 \quad \forall j \in J_n.
\end{gather*}
Since the algorithm $\mathcal{A}_{\varepsilon}$ computes only an approximately optimal solution, even in the case when
$x_i$, $i \in I_n$, are affinely independent, and $y_j$, $j \in J_n$, are affinely independent, all coefficients
$\alpha_n^{(i)}(\varepsilon)$ and $\beta_n^{(j)}(\varepsilon)$ might be strictly positive. Therefore we propose to
utilize essentially the same strategy for removing indices from the sets $I_n$ and $J_n$ as is used in 
Meta-algorithm~\ref{alg:NearestPointRobust}. The main goal of this strategy is to maintain the validity of the
approximate distance decay condition
\[
  \theta_{n + 1} := \| v_{n + 1}(\varepsilon) - w_{n + 1}(\varepsilon) \| <
  \| v_n(\varepsilon) - w_n(\varepsilon) \| =: \theta_n \quad \forall n.
\]
As we will show below, this inequality guarantees finite termination of the robust meta-algorithm for finding the
distance between two polytopes given in Meta-algorithm~\ref{alg:PolytopesDistanceRobust}.

\begin{algorithm}[htbp]  \label{alg:PolytopesDistanceRobust}
\caption{Robust meta-algorithm for finding the distance between two polytopes.}

\noindent\textbf{Input:} {two collection of points 
$\{ x_1, \ldots, x_{\ell} \}, \{ y_1, \ldots, y_m \} \subset \mathbb{R}^d$, $\eta > 0$, and an algorithm
$\mathcal{A}_{\varepsilon}$, $\varepsilon > 0$, for finding the distance between two polytopes.}

\noindent\textbf{Step 0:} {Put $n = 0$, choose index sets $I_n \subseteq I$ and $J_n \subseteq J$ with 
$|J_n| = |I_n| = d + 1$, and define 
\[
  P_n = \co\{ x_i \mid i \in I_n \}, \quad Q_n = \co\{ y_j \mid j \in J_n \}.
\]
Compute $(\alpha_n(\varepsilon), \beta_n(\varepsilon)) = \mathcal{A}_{\varepsilon}(P_n, Q_n)$,
\[
  v_n(\varepsilon) = \sum_{i \in I_n} \alpha_n^{(i)}(\varepsilon) x_i, \quad
  w_n(\varepsilon) = \sum_{j \in J_n} \beta_n^{(j)}(\varepsilon) y_j, \quad 
  \theta_n = \| v_n(\varepsilon) - w_n(\varepsilon) \|.
\]
}

\noindent{\textbf{Step 1:} Compute
\[
  \rho_{xn} = \min_{i \in I} \langle v_n(\varepsilon) - w_n(\varepsilon), x_i - v_n(\varepsilon) \rangle, \quad
  \rho_{yn} = \min_{j \in J} \langle w_n(\varepsilon) - v_n(\varepsilon), y_j - w_n(\varepsilon) \rangle.
\]
If $\rho_{xn} \ge - \eta$ and $\rho_{yn} \ge - \eta$, \textbf{return} $(v_n(\varepsilon), w_n(\varepsilon))$. 
If $\rho_{xn} < - \eta$, find $i_{1n} \in I_n$ and $i_{2n} \in I$ such that
\[
  \alpha_n^{(i_{1n})}(\varepsilon) = \min_{i \in I_n} \alpha_n^{(i)}(\varepsilon), \quad
  \langle v_n(\varepsilon) - w_n(\varepsilon), x_{i_{2n}} \rangle = 
  \min_{i \in I} \langle v_n(\varepsilon) - w_n(\varepsilon), x_i \rangle,
\]
and define 
\[
  I_{n + 1} = \Big( I_n \setminus \{ i_{1n} \} \Big) \cup \{ i_{2n} \}, \quad
  P_{n + 1} = \co\Big\{ x_i \Bigm| i \in I_{n + 1} \Big\}.
\]
If $\rho_{yn} < - \eta$, find $j_{1n} \in J_n$ and $j_{2n} \in J$ such that
\[
  \beta_n^{(j_{1n})}(\varepsilon) = \min_{j \in J_n} \beta_n^{(j)}(\varepsilon), \quad
  \langle w_n(\varepsilon) - v_n(\varepsilon), y_{j_{2n}} \rangle = 
  \min_{j \in J} \langle w_n(\varepsilon) - v_n(\varepsilon), y_j \rangle,
\]
and define 
\[
  J_{n + 1} = \Big( J_n \setminus \{ j_{1n} \} \Big) \cup \{ j_{2n} \}, \quad
  Q_{n + 1} = \co\Big\{ y_j \Bigm| j \in J_{n + 1} \Big\}
\]
}

\noindent\textbf{Step 2:} {Compute 
$(\alpha_{n + 1}(\varepsilon), \beta_{n + 1}(\varepsilon)) = \mathcal{A}_{\varepsilon}(P_{n + 1}, Q_{n + 1})$,
\[
  v_{n + 1}(\varepsilon) = \sum_{i \in I_{n + 1}} \alpha_{n + 1}^{(i)}(\varepsilon) x_i, \quad
  w_{n + 1}(\varepsilon) = \sum_{j \in J_{n + 1}} \beta_{n + 1}^{(j)}(\varepsilon) y_j,
\]
and $\theta_{n + 1} = \| v_{n + 1}(\varepsilon) - w_{n + 1}(\varepsilon) \|$. If $\theta_{n + 1} < \theta_n$, set $n = n
+ 1$ 
and go to \textbf{Step 1}. Otherwise, go to \textbf{Step 3}.
}

\noindent{\textbf{Step 3:} Apply the coefficients correction method and go to Step~1.
}
\end{algorithm}

The meta-algorithm uses a heuristic rule for choosing indices $i_{1n} \in I_n$ and $j_{1n} \in J_n$ that are removed
from the subpolytopes $P_n$ and $Q_n$ on each iteration. This rule consists in finding the minimal coefficients
\[
  \alpha_n^{(i_{1n})}(\varepsilon) = \min_{i \in I_n} \alpha_n^{(i)}(\varepsilon), \quad
  \beta_n^{(j_{1n})}(\varepsilon) = \min_{j \in J_n} \beta_n^{(j)}(\varepsilon)
\]
If such choice of indices $i_{1n} \in I_n$ and $j_{1n} \in J_n$ ensures the validity of the inequality
$\theta_{n + 1} < \theta_n$ (the approximate distance decay condition), then the meta-algorithm increments $n$ and moves
to the next iteration. Otherwise, it employs the \textbf{coefficients correction method} to update 
$\alpha_n(\varepsilon)$ and $\beta_n(\varepsilon)$ in such a way that would guarantee the validity of the approximate
distance decay condition. The coefficients correction method is described below:

\begin{itemize}
\item{\textbf{Step~1:} If $\rho_{xn} \ge - \eta$, go to \textbf{Step 3}. Otherwise, find an approximately optimal
solution $\gamma_n$ of the problem
\[
  \min_{\gamma} \Big\| \sum_{i \in I_n} \gamma^{(i)} x_i - w_n(\varepsilon) \Big \|^2 
  \quad \text{subject to} \quad \sum_{i \in I_n} \gamma^{(i)} = 1.
\]
Compute $h_n = \sum_{i \in I_n} \gamma_n^{(i)} x_i$ and $\gamma_{\min} = \min_{i \in I_n} \gamma_n^{(i)}$. 
If $\gamma_{\min} < 0$, compute
\[
  \mu = \min\left\{ \frac{\alpha_n^{(i)}(\varepsilon)}{\alpha_n^{(i)}(\varepsilon) - \gamma_n^{(i)}} \Biggm| 
  i \in I_n \colon \gamma_n^{(i)} < 0 \right\},
\]
define $\alpha_n(\varepsilon) = (1 - \mu) \alpha_n(\varepsilon) + \mu \gamma_n$ and
\[
  v_n(\varepsilon) = (1 -\mu) v_n(\varepsilon) + \mu h_n, \quad
  \theta_n = \| v_n(\varepsilon) - w_n(\varepsilon) \|.
\]
If $\gamma_{\min} = 0$, define $\alpha_n(\varepsilon) = \gamma_n$, $v_n(\varepsilon) = h_n$,
$\theta_n = \| h_n - w_n(\varepsilon) \|$, and go to \textbf{Step~3}. If $\gamma_{\min} > 0$, go to \textbf{Step 2}.
}

\item{\textbf{Step 2:} Choose any $k \in I_n$, find an approximate least-squares solution $\lambda_n$ of
the system
\[
  \sum_{i \in I_n \setminus \{ k \}} \lambda^{(i)} (x_i - x_k) = 0, \quad 
  \sum_{i \in I_n \setminus \{ k \}} \lambda^{(i)} = 1
\]
and set $\lambda_n^{(k)} = - \sum_{i \in I_n \setminus \{ k \}} \lambda_n^{(i)}$. Compute
\[
  \nu = \min\left\{ - \frac{ \alpha_n^{(i)}(\varepsilon)}{\lambda_n^{(i)}} \Biggm| 
  i \in I_n \colon \lambda_n^{(i)} < 0 \right\},
  \quad \alpha_n(\varepsilon) = \alpha_n(\varepsilon) + \nu \lambda_n,
\]
and define $v_n(\varepsilon) = \sum_{i \in I_n} \alpha_n(\varepsilon) x_i$ and 
$\theta_n = \| v_n(\varepsilon) - w_n(\varepsilon) \|$.
}

\item{\textbf{Step~3:} Find an approximately optimal solution $\gamma_n$ of the problem
\[
  \min_{\gamma} \Big\| \sum_{j \in J_n} \gamma^{(j)} y_j - v_n(\varepsilon) \Big \|^2 
  \quad \text{subject to} \quad \sum_{j \in J_n} \gamma^{(j)} = 1.
\]
Compute $h_n = \sum_{j \in J_n} \gamma_n^{(i)} y_j$ and $\gamma_{\min} = \min_{j \in J_n} \gamma_n^{(j)}$. 
If $\gamma_{\min} < 0$, compute
\[
  \mu = \min\left\{ \frac{\beta_n^{(j)}(\varepsilon)}{\beta_n^{(j)}(\varepsilon) - \gamma_n^{(j)}} \Biggm| 
  j \in J_n \colon \gamma_n^{(j)} < 0 \right\},
\]
and define $\beta_n(\varepsilon) = (1 - \mu) \beta_n(\varepsilon) + \mu \gamma_n$ and
\[
  w_n(\varepsilon) = (1 -\mu) w_n(\varepsilon) + \mu h_n, \quad
  \theta_n = \| v_n(\varepsilon) - w_n(\varepsilon) \|.
\]
If $\gamma_{\min} = 0$, define $\beta_n(\varepsilon) = \gamma_n$, $w_n(\varepsilon) = h_n$,
$\theta_n = \| h_n - w_n(\varepsilon) \|$. If $\gamma_{\min} > 0$, go to \textbf{Step 4}.
}

\item{\textbf{Step 4:} Choose any $k \in J_n$, find an approximate least-squares solution $\lambda_n$ of
the system
\[
  \sum_{j \in J_n \setminus \{ k \}} \lambda^{(j)} (y_j - y_k) = 0, \quad 
  \sum_{j \in J_n \setminus \{ k \}} \lambda^{(j)} = 1
\]
and set $\lambda_n^{(k)} = - \sum_{j \in J_n \setminus \{ k \}} \lambda_n^{(j)}$. Compute
\[
  \nu = \min\left\{ - \frac{ \beta_n^{(j)}(\varepsilon)}{\lambda_n^{(j)}} \Biggm| 
  j \in J_n \colon \lambda_n^{(j)} < 0 \right\}, \quad
  \beta_n(\varepsilon) = \beta_n(\varepsilon) + \nu \lambda_n,
\]
and define $w_n(\varepsilon) = \sum_{j \in J_n} \beta_n(\varepsilon) y_j$ and 
$\theta_n = \| v_n(\varepsilon) - w_n(\varepsilon) \|$.
}
\end{itemize}

Let us present a theoretical analysis of the proposed robust meta-algorithm for computing the distance between two
polytopes. Our main goal is to show that under some natural assumptions this meta-algorithm terminates after a finite
number of steps and returns an approximately (in some sense) optimal solution of the problem $(\mathcal{D})$.

We start our analysis by showing that if on $n$th iteration of the meta-algorithm the vectors $\alpha_n(\varepsilon)$
and $\beta_n(\varepsilon)$ are such that
\begin{equation} \label{eq:ZeroCoeffsCondition}
  \min_{i \in I_n} \alpha_n^{(i)}(\varepsilon) = 0, \quad \min_{j \in J_n} \beta_n^{(j)}(\varepsilon) = 0
\end{equation}
(i.e. at least one of the coefficients of each of the corresponding convex combinations is zero), then on Step~2 of the
meta-algorithm the approximate distance decay condition $\theta_{n + 1} < \theta_n$ holds true. Therefore, the
meta-algorithm increments $n$ and moves to the next iteration without executing any other steps.

\begin{lemma} \label{lem:ApproxDistanceDecay_DistProblem}
Suppose that $\diam(P) + \diam(Q) > 0$,
\begin{multline} \label{eq:ParametersConsistency_DistProblem}
  \max\Bigg\{ 2(\diam(P) + \diam(Q) + \dist(P, Q)) \varepsilon, 
  \\
  \sqrt{\frac{\diam(P)^2 \diam(Q)^2}{\diam(P)^2 + \diam(Q)^2}}
  \sqrt{\max\{ 0, 4 \varepsilon \theta_0 - 2 \varepsilon^2 \}} \Bigg\} 
  \\
  < \eta \le 2 \min\big\{ \diam(P)^2, \diam(Q)^2 \big\}
\end{multline}
and the algorithm $\mathcal{A}_{\varepsilon}$ with $\varepsilon \ge 0$ satisfies the following \textbf{approximate
optimality condition}: for any polytopes $H = \co\{ h_1, \ldots, h_r \} \subset \mathbb{R}^d$ 
and $Z = \co\{ z_1, \ldots, z_s \} \subset \mathbb{R}^d$ one has 
\begin{align*}
  &\Big\| \sum_{i = 1}^r \alpha^{(i)} h_i - \sum_{j = 1}^s \beta^{(j)} z_j \Big\| < \dist(H, Z) + \varepsilon, 
  \\
  &\sum_{i = 1}^r \alpha^{(i)} = \sum_{j = 1}^s \beta^{(j)} = 1, \quad 
  \alpha^{(i)}, \beta^{(j)} \ge 0 \quad \forall i, j,
\end{align*}
where $(\alpha, \beta) = \mathcal{A}_{\varepsilon}(H, Z)$. Let also for some $n \in \mathbb{N}$ the stopping criterion
is not satisfied on $n$th iteration of Meta-algorithm~\ref{alg:PolytopesDistanceRobust}, equalities
\eqref{eq:ZeroCoeffsCondition} hold true on Step~1, and $\theta_{k + 1} < \theta_k$ for any 
$k \in \{ 0, 1, \ldots, n - 1 \}$. Then for $\theta_{n + 1}$ computed on Step~2 one has $\theta_{n + 1} < \theta_n$. 
\end{lemma}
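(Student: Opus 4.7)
The plan is to mirror the two-part proof of Lemma~\ref{lem:ApproxDistanceDecay}, promoting every object to a pair of polytopes. First I would establish by contradiction that $\theta_k \ge \varepsilon$ for every $k \in \{0, 1, \ldots, n\}$. If $\theta_k < \varepsilon$ for some such $k$, then $\dist(P, Q) \le \theta_k < \varepsilon$, so an optimal pair $(v_*, w_*)$ satisfies $\|v_* - w_*\| < \varepsilon$; the triangle inequality then gives $\|v_k(\varepsilon) - w_k(\varepsilon) - (v_* - w_*)\| < 2\varepsilon$. The second half of Proposition~\ref{prp:ApproximateOptimalityDistance} places $(v_k(\varepsilon), w_k(\varepsilon))$ in the approximate optimality regime with tolerance $2(\diam(P) + \diam(Q) + \dist(P, Q))\varepsilon$, which is strictly smaller than $\eta$ by the first clause of \eqref{eq:ParametersConsistency_DistProblem}. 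This contradicts the assumption that the stopping criterion has failed on all iterations up to $n$.

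For the main part, the zero-minimum hypotheses immediately give $v_n(\varepsilon) \in P_{n+1}$ and $w_n(\varepsilon) \in Q_{n+1}$, irrespective of which (if any) of the subpolytopes was actually modified. I would then reproduce the segment construction from the preceding non-robust theorem for the distance problem: introduce
\[
  x_n(t) = (1 - t)\, v_n(\varepsilon) + t\, x_{i_{2n}}, \qquad
  y_n(\tau) = (1 - \tau)\, w_n(\varepsilon) + \tau\, y_{j_{2n}},
\]
with the convention $t \equiv 0$ if $\rho_{xn} \ge -\eta$ and $\tau \equiv 0$ if $\rho_{yn} \ge -\eta$, and the function
\[
  f(t, \tau) = \|x_n(t) - y_n(\tau)\|^2 = \theta_n^2 + 2 t \rho_{xn} + 2 \tau \rho_{yn} + \|t\,\Delta_x - \tau\,\Delta_y\|^2,
\]
where $\Delta_x = x_{i_{2n}} - v_n(\varepsilon)$ and $\Delta_y = y_{j_{2n}} - w_n(\varepsilon)$. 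Splitting into the three standard subcases according to which of $\rho_{xn} < -\eta$ and $\rho_{yn} < -\eta$ is in force, I would exhibit admissible $(t^*, \tau^*) \in [0, 1]^2$ --- admissibility being exactly what the rightmost bound $\eta \le 2 \min\{\diam(P)^2, \diam(Q)^2\}$ in \eqref{eq:ParametersConsistency_DistProblem} buys --- producing $f(t^*, \tau^*) \le \theta_n^2 - \delta$ with $\delta \ge \tfrac{1}{2} \eta^2 (1/\diam(P)^2 + 1/\diam(Q)^2)$. Combining $\dist(P_{n+1}, Q_{n+1}) \le \sqrt{f(t^*, \tau^*)}$ with the approximate-optimality assumption on $\mathcal{A}_{\varepsilon}$ gives $\theta_{n+1} \le \sqrt{\theta_n^2 - \delta} + \varepsilon$; using $\theta_n \ge \varepsilon$ from the first part and $\theta_n \le \theta_0$, the desired inequality $\theta_{n+1} < \theta_n$ reduces after squaring to $\delta > 2 \theta_0 \varepsilon - \varepsilon^2$, which, multiplied by $2$, is precisely the second clause of \eqref{eq:ParametersConsistency_DistProblem}.

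The main technical obstacle will be the Case~I analysis, where $f$ is genuinely two-dimensional and the sign of the cross term $c = \langle \Delta_x, \Delta_y\rangle$ forces a dichotomy. When $c \ge 0$ the product choice $(t^*, \tau^*) = (\eta/\diam(P)^2, \eta/\diam(Q)^2)$ immediately yields $\delta \ge \eta^2(1/\diam(P)^2 + 1/\diam(Q)^2)$ after discarding the non-positive cross contribution. When $c < 0$ one must retreat to a single-axis minimisation ($\tau^* = 0$ or $t^* = 0$), obtaining the weaker bound $\delta \ge \eta^2/\min(\diam(P), \diam(Q))^2$, and then invoke the elementary estimate $1/\min(\diam(P), \diam(Q))^2 \ge \tfrac{1}{2}(1/\diam(P)^2 + 1/\diam(Q)^2)$ to stay above the target. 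Cases~II and~III are essentially one-dimensional specialisations and are handled by the same single-axis computation as in Lemma~\ref{lem:ApproxDistanceDecay}.
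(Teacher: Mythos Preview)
Your overall architecture matches the paper exactly: Part~1 (the contradiction argument showing $\theta_k \ge \varepsilon$) is identical, and Part~2 is built on the same line-segment construction and the same reduction of $\theta_{n+1} < \theta_n$ to the inequality $\delta > 2\theta_0\varepsilon - \varepsilon^2$. Cases~II and~III are handled just as you say, by collapsing to Lemma~\ref{lem:ApproxDistanceDecay}.

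The one substantive difference is your treatment of Case~I. The paper does \emph{not} split on the sign of $c = \langle \Delta_x, \Delta_y\rangle$. Instead it bounds the cross term crudely via Cauchy--Schwarz, $-2t\tau c \le 2t\tau\,\diam(P)\diam(Q)$, and then takes the single test point $(t_*, \tau_*) = \big(\eta/(2\diam(P)^2),\, \eta/(2\diam(Q)^2)\big)$. This choice is admissible precisely because of the bound $\eta \le 2\min\{\diam(P)^2, \diam(Q)^2\}$, and a short computation collapses the estimate to
\[
  f(t_*, \tau_*) \le \theta_n^2 - \frac{\eta^2}{2}\Big(\frac{1}{\diam(P)^2} + \frac{1}{\diam(Q)^2}\Big)
  - \Big(\frac{\eta}{2\diam(P)} - \frac{\eta}{2\diam(Q)}\Big)^2,
\]
giving exactly the $\delta$ you need with no case distinction.

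Your sign-of-$c$ dichotomy, by contrast, runs into trouble. First, your stated choice $(t^*, \tau^*) = (\eta/\diam(P)^2, \eta/\diam(Q)^2)$ need not lie in $[0,1]^2$: the hypothesis only gives $t^*, \tau^* \le 2$, not $\le 1$. More seriously, in your $c < 0$ subcase the single-axis retreat does not deliver the required constant across the full range of $\eta$. Take $\diam(P) = \diam(Q) = D$ and $\eta$ close to $2D^2$: optimising along one axis alone (even with the optimal admissible $t^*$) yields at best $\delta = 2\eta - \|\Delta_x\|^2$ or $\delta = \eta^2/D^2$ when $t^* \le 1$, and neither dominates $\tfrac{\eta^2}{2}(1/D^2 + 1/D^2) = \eta^2/D^2$ throughout $D^2 < \eta \le 2D^2$ (for instance at $\eta = 2D^2$ with $\|\Delta_x\| = D$ one gets $\delta = 3D^2 < 4D^2$). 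The elementary inequality $1/\min^2 \ge \tfrac12(1/D_P^2 + 1/D_Q^2)$ you invoke is correct, but it is applied to a $\delta$ that was obtained under an admissibility assumption which the upper bound on $\eta$ does not guarantee. The fix is simply to drop the dichotomy and adopt the paper's uniform Cauchy--Schwarz estimate on the cross term; then a single, always-admissible test point does the job.
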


\begin{proof}
\textbf{Part~1.} Let us first show that $\theta_k \ge \varepsilon$ for any $k \in \{ 0, \ldots, n \}$. Indeed, suppose
by contradiction that $\theta_k < \varepsilon$ for some $k \in \{ 0, 1, \ldots, n \}$, that is, 
$\| v_k(\varepsilon) - w_k(\varepsilon) \| < \varepsilon$. Let $(v_*, w_*)$ be an optimal solution of the problem
$(\mathcal{D})$. Then obviously $\| v_* - w_* \| \le \| v_k(\varepsilon) - w_k(\varepsilon) \| < \varepsilon$, which
yields
\[
  \| v_k(\varepsilon) - w_k(\varepsilon) - (v_* - w_*) \| \le 2 \varepsilon.
\]
Therefore by Prop.~\ref{prp:ApproximateOptimalityDistance} for any $i \in I$ and $j \in J$ one has
\begin{align*}
  \langle v_k(\varepsilon) - w_k(\varepsilon), x_i - v_k(\varepsilon) \rangle 
  &\ge - 2\big( \diam(P) + \diam(Q) + \dist(P, Q) \big) \varepsilon,
  \\
  \langle w_k(\varepsilon) - v_k(\varepsilon), y_j - w_k(\varepsilon) \rangle 
  &\ge - 2\big( \diam(P) + \diam(Q) + \dist(P, Q) \big) \varepsilon.
\end{align*}
Hence with the use of the first inequality in \eqref{eq:ParametersConsistency_DistProblem} one can conclude that 
the pair $(v_k(\varepsilon), w_k(\varepsilon))$ satisfies the stopping criterion of
Meta-algorithm~\ref{alg:PolytopesDistanceRobust} (see Step~1 of the meta-algorithm), which contradicts the assumption of
the lemma that the meta-algorithm performs $n$th iteration and the stopping criterion is not satisfied on this
iteration.

\textbf{Part~2.} Let us now prove the statement of the lemma. By our assumption the stopping criterion is not satisfied
on $n$th iteration. For the sake of shortness, below we will consider only the case when $\rho_{xn} < - \eta$ and
$\rho_{yn} < - \eta$. The proof of the cases when either $\rho_{xn} \ge - \eta$ or $\rho_{yn} \ge - \eta$ essentially
coincides with the proof of Lemma~\ref{lem:ApproxDistanceDecay}.

By condition \eqref{eq:ZeroCoeffsCondition} and the definition of indices $i_{1n} \in I_n$ and $j_{1n} \in J_n$ one has
\[
  \alpha_n^{(i_{1n})}(\varepsilon) = 0, \quad \beta_n^{(j_{1n})}(\varepsilon) = 0,
\]
which implies that
\[
  v_n(\varepsilon) \in \co\Big\{ x_i \Bigm| i \in I_n \setminus \{ i_{1n} \} \Big\}, \quad
  w_n(\varepsilon) \in \co\Big\{ y_j \Bigm| j \in J_n \setminus \{ j_{1n} \} \Big\}.
\]
Hence by the definitions of $P_{n + 1}$ and $Q_{n + 1}$ (see Step~1 of Meta-algorithm~\ref{alg:PolytopesDistanceRobust})
one has $v_n(\varepsilon) \in P_{n + 1}$ and $w_n(\varepsilon) \in Q_{n + 1}$. 

Define
\[
  x_n(t) = (1 - t) v_n(\varepsilon) + t x_{i_{2n}}, \quad
  y_n(\tau) = (1 - \tau) w_n(\varepsilon) + \tau y_{j_{2n}}.
\]
Note that $x_n(t) \in P_{n + 1}$ for any $t \in [0, 1]$ and $y_n(\tau) \in Q_{n + 1}$ for any $\tau \in [0, 1]$
due to the definitions of $P_{n + 1}$ and $Q_{n + 1}$.

For any $t \in [0, 1]$ and $\tau \in [0, 1]$ one has
\begin{align*}
  f(t, \tau) &:= \| x_n(t) - y_n(\tau) \|^2 = \| v_n(\varepsilon) - w_n(\varepsilon) \|^2
  \\
  &+ 2 t \langle v_n(\varepsilon) - w_n(\varepsilon), x_{i_{2n}} - v_n(\varepsilon) \rangle
  + 2 \tau \langle w_n(\varepsilon) - v_n(\varepsilon), y_{j_{2n}} - w_n(\varepsilon) \rangle
  \\
  &- 2 t \tau \langle v_n(\varepsilon) - x_{i_{2n}}, w_n(\varepsilon) - y_{j_{2n}} \rangle
  + t^2 \| v_n(\varepsilon) - x_{i_{2n}} \|^2 + \tau^2 \| w_n(\varepsilon) - y_{j_{2n}} \|^2.
\end{align*}
Estimating the inner products and the norms from above one gets
\begin{align*}
  f(t, \tau) &\le \theta_n^2 + 2 t \rho_{xn} + 2 \tau \rho_{yn} + 2 t \tau \diam(P) \diam(Q)
  \\
  &+ t^2 \diam(P)^2 + \tau^2 \diam(Q)^2.
\end{align*}
Let $t_* = \eta / 2\diam(P)^2$ and $\tau_* = \eta / 2\diam(Q)^2$. Note that $t_* \in (0, 1]$ and $\tau \in (0, 1]$ due
to the second inequality in \eqref{eq:ParametersConsistency_DistProblem}. Observe also that
\begin{align*}
  f(t_*, \tau_*) 
  &\le \theta_n^2 - \frac{3\eta^2}{4\diam(P)^2} - \frac{3\eta^2}{4\diam(Q)^2} + \frac{\eta^2}{2 \diam(P) \diam(Q)}
  \\
  &= \theta_n^2 - \frac{\eta^2}{2} \left( \frac{1}{\diam(P)^2} + \frac{1}{\diam(Q)^2} \right)
  - \left( \frac{\eta}{2 \diam(P)} - \frac{\eta}{2 \diam(Q)} \right)^2.
\end{align*}
Hence applying the first inequality in \eqref{eq:ParametersConsistency_DistProblem} one gets that
\begin{align*}
  \dist(P_{n + 1}, Q_{n + 1})^2 &\le \min_{t, \tau \in [0, 1]} f(t, \tau) \le f(t_*, \tau_*) 
  \\
  &\le \theta_n^2 - \frac{\eta^2}{2} \left( \frac{1}{\diam(P)^2} + \frac{1}{\diam(Q)^2} \right)
  < \theta_n^2 - 2 \theta_0 \varepsilon + \varepsilon^2,
\end{align*}
which implies that $\dist(P_{n + 1}, Q_{n + 1})^2 < (\theta_n - \varepsilon)^2$, thanks to the inequality
$\theta_0 > \theta_1 > \ldots \theta_n > \varepsilon$ that holds true by our assumption and the first part of the proof.
Hence with the use of the approximate optimality condition on algorithm $\mathcal{A}_{\varepsilon}$ one has
\[
  \theta_{n + 1} = \| v_{n + 1}(\varepsilon) - w_{n + 1}(\varepsilon) \| 
  \le \dist(P_{n + 1}, Q_{n + 1}) + \varepsilon < \theta_n,
\]  
which completes the proof.
\end{proof}

\begin{remark}
It should be noted that the lemma above holds true regardless of whether $\theta_0, \theta_1, \ldots, \theta_n$, and
$(\alpha_n(\varepsilon), \beta_n(\varepsilon))$ were computed on Step 2 or via the coefficients correction method. In
particular, it holds true even if the equalities \eqref{eq:ZeroCoeffsCondition} are satisfied for
$(\alpha_n(\varepsilon), \beta_n(\varepsilon))$ that was computed by the coefficient correctness method and not directly
computed by the algorithm $\mathcal{A}_{\varepsilon}$.
\end{remark}

With the use of the lemma above one can easily verify that if the algorithm $\mathcal{A}_{\varepsilon}$ is such that its
output $(\alpha_n(\varepsilon), \beta_n(\varepsilon)) = \mathcal{A}_{\varepsilon}(P_n, Q_n)$ always satisfies equalities
\eqref{eq:ZeroCoeffsCondition}, then Meta-algorithm~\ref{alg:PolytopesDistanceRobust} never executes the coefficients
correction method and terminates after a finite number of steps. The straightforward proof of this results is based on 
Lemma~\ref{lem:ApproxDistanceDecay_DistProblem} and in essence repeats the proof of 
Theorem~\ref{thrm:RobustMetaAlg_Wolfe}. Therefore, we omit it for the sake of shortness.

\begin{theorem}
Let $\ell, m \ge d + 1$, $\diam(P) + \diam(Q) > 0$, inequalities \eqref{eq:ParametersConsistency_DistProblem} hold true,
and the algorithm $\mathcal{A}_{\varepsilon}$ with $\varepsilon > 0$ satisfy the approximate optimality condition from
Lemma~\ref{lem:ApproxDistanceDecay_DistProblem}. Suppose also that for any polytopes
$H = \co\{ h_1, \ldots, h_r \} \subset \mathbb{R}^d$ and $Z = \co\{ z_1, \ldots, z_s \} \subset \mathbb{R}^d$ with
$r, s \ge d + 1$ there exist $i \in \{ 1, \ldots, r \}$ and $j \in \{ 1, \ldots, s \}$ such that for 
$(\alpha, \beta) = \mathcal{A}_{\varepsilon}(P, Q)$ one has $\alpha^{(i)} = \beta^{(j)} = 0$.
Then Meta-algorithm~\ref{alg:PolytopesDistanceRobust} is correctly defined, never executes Step~3 (the coefficients
correction method), terminates after a finite number of iterations, and returns a pair 
$(v_n(\varepsilon), w_n(\varepsilon)) \in P \times Q$ such that 
\[
  \| v_n(\varepsilon) - w_n(\varepsilon) - (v_* - w_*) \| \le \sqrt{2 \eta}, \quad
  \| v_n(\varepsilon) - w_n(\varepsilon) \| \le \dist(P, Q) + \sqrt{2 \eta},
\]
where $(v_*, w_*)$ is an optimal solution of the problem $(\mathcal{D})$.
\end{theorem}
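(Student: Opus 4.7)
The plan is to mirror the proof of Theorem~\ref{thrm:RobustMetaAlg_Wolfe} almost verbatim, replacing the single-polytope ingredients by their two-polytope analogues. Specifically, I will use Lemma~\ref{lem:ApproxDistanceDecay_DistProblem} in place of Lemma~\ref{lem:ApproxDistanceDecay}, Proposition~\ref{prp:ApproximateOptimalityDistance} in place of Proposition~\ref{prp:ApproximateOptimality}, and the fact that the two index sets $I_n, J_n$ each have cardinality $d+1$ so there are only finitely many candidates for the pair $(P_n, Q_n)$.

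First I would argue, by induction on $n$, that the approximate distance decay condition $\theta_0 > \theta_1 > \ldots > \theta_n > \ldots$ is preserved and Step~3 is never executed. The base case consists in examining the output on Step~0: by the extra hypothesis imposed on $\mathcal{A}_{\varepsilon}$, the vectors $\alpha_0(\varepsilon)$ and $\beta_0(\varepsilon)$ satisfy $\min_{i \in I_0}\alpha_0^{(i)}(\varepsilon) = 0$ and $\min_{j \in J_0}\beta_0^{(j)}(\varepsilon) = 0$, so condition \eqref{eq:ZeroCoeffsCondition} holds. If the stopping criterion is already satisfied at $n=0$, there is nothing to prove; otherwise Lemma~\ref{lem:ApproxDistanceDecay_DistProblem} (applied exactly as in Part~2 of its proof, both in the case $\rho_{x0}, \rho_{y0} < -\eta$ and in the two mixed cases that reduce to the proof of Lemma~\ref{lem:ApproxDistanceDecay}) yields $\theta_1 < \theta_0$. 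Hence Step~2 triggers the increment $n \mapsto n+1$ without invoking the coefficients correction method. For the inductive step the same argument applies, since the assumption on $\mathcal{A}_{\varepsilon}$ again guarantees that the freshly computed pair $(\alpha_{n+1}(\varepsilon), \beta_{n+1}(\varepsilon))$ satisfies \eqref{eq:ZeroCoeffsCondition}, and Lemma~\ref{lem:ApproxDistanceDecay_DistProblem} applies with the new $n$ in place of the old.

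Next I would derive finite termination. The strict inequalities $\theta_0 > \theta_1 > \ldots$ imply that the pairs $(P_n, Q_n)$ are pairwise distinct, because the value $\theta_n$ is determined up to an error of at most $\varepsilon$ by the pair $(P_n, Q_n)$ via the approximate optimality condition on $\mathcal{A}_{\varepsilon}$, and a repeated pair would cause $\theta_n$ to cease to decrease. Since each $P_n$ and $Q_n$ is a convex hull of $d+1$ points chosen from $\{x_1,\ldots,x_\ell\}$ and $\{y_1,\ldots,y_m\}$ respectively, the total number of admissible pairs is at most $\binom{\ell}{d+1}\binom{m}{d+1} < \infty$. Hence after finitely many iterations the stopping criterion $\rho_{xn} \ge -\eta$, $\rho_{yn} \ge -\eta$ must be met on Step~1.

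Finally, once the stopping criterion holds at some index $n$, the pair $(v_n(\varepsilon), w_n(\varepsilon)) \in P \times Q$ satisfies the approximate optimality inequalities \eqref{eq:SubOptimCondDist}, and a direct application of the first part of Proposition~\ref{prp:ApproximateOptimalityDistance} delivers both required bounds $\| v_n(\varepsilon) - w_n(\varepsilon) - (v_* - w_*) \| \le \sqrt{2\eta}$ and $\| v_n(\varepsilon) - w_n(\varepsilon) \| \le \dist(P, Q) + \sqrt{2\eta}$. The main technical obstacle is verifying that the mixed cases ($\rho_{xn} < -\eta$ but $\rho_{yn} \ge -\eta$, or vice versa) on Step~1 still propagate \eqref{eq:ZeroCoeffsCondition} and the distance decay through Step~2; this is the analogue of Cases II--III in the proof of the earlier theorem and should require nothing beyond noting that the ``frozen'' subpolytope keeps the stale zero coordinate while the updated one obtains a new zero coordinate from the hypothesis on $\mathcal{A}_{\varepsilon}$.
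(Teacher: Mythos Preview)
Your proposal is correct and follows exactly the route the paper indicates: the paper omits the proof, stating that it ``is based on Lemma~\ref{lem:ApproxDistanceDecay_DistProblem} and in essence repeats the proof of Theorem~\ref{thrm:RobustMetaAlg_Wolfe},'' which is precisely what you do. One small cosmetic point: your last paragraph's concern about ``propagating \eqref{eq:ZeroCoeffsCondition}'' in the mixed cases is unnecessary, since after Step~2 a fresh call to $\mathcal{A}_{\varepsilon}$ always produces coefficients satisfying \eqref{eq:ZeroCoeffsCondition} by hypothesis; the mixed-case work lives entirely inside the proof of Lemma~\ref{lem:ApproxDistanceDecay_DistProblem}, not in the induction you set up here.
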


Let us finally provide sufficient conditions for the correctness and finite termination of
Meta-algorithm~\ref{alg:PolytopesDistanceRobust} in the general case. These conditions largely coincide with the
corresponding condition for Meta-algorithm~\ref{alg:NearestPointRobust} for finding the nearest point in a polytope. 

\begin{theorem}
Let $\ell, m \ge d + 1$, $\diam(P) + \diam(Q) > 0$, inequalities \eqref{eq:ParametersConsistency_DistProblem} be
satisfied, and the following approximate optimality conditions hold true:
\begin{enumerate}
\item{for any polytopes $H = \co\{ h_1, \ldots, h_r \} \subset \mathbb{R}^d$ 
and $Z = \co\{ z_1, \ldots, z_s \} \subset \mathbb{R}^d$ one has 
\begin{align*}
  &\Big\| \sum_{i = 1}^r \alpha^{(i)} h_i - \sum_{j = 1}^s \beta^{(j)} z_j \Big\| < \dist(H, Z) + \varepsilon, 
  \\
  &\sum_{i = 1}^r \alpha^{(i)} = \sum_{j = 1}^s \beta^{(j)} = 1, \quad 
  \alpha^{(i)}, \beta^{(j)} \ge 0 \quad \forall i, j,
\end{align*}
where $(\alpha, \beta) = \mathcal{A}_{\varepsilon}(H, Z)$;
}

\item{if for some $n \in \mathbb{N}$ Meta-algorithm~\ref{alg:PolytopesDistanceRobust} executes Step~1 of 
the coefficient correction method, then $\sum_{i \in I_n} \gamma_n^{(i)} = 1$ and 
$\| h_n - w_n(\varepsilon) \| \le \| v_n(\varepsilon) - w_n(\varepsilon) \|$; similarly, if for some $n \in \mathbb{N}$
Meta-algorithm~\ref{alg:PolytopesDistanceRobust} executes Step~3 of the coefficient correction method, then 
$\sum_{i \in I_n} \gamma_n^{(i)} = 1$ and $\| h_n - v_n(\varepsilon) \| \le \| v_n(\varepsilon) - w_n(\varepsilon) \|$; 
}

\item{if for some $n \in \mathbb{N}$ the vectors $x_i$, $i \in I_n$, are affinely independent and 
Meta-algorithm~\ref{alg:PolytopesDistanceRobust} executes Step~1 of the coefficient correction method, then 
$\gamma_{\min} \le 0$; similarly, if for some $n \in \mathbb{N}$ the vectors $y_j$, $j \in J_n$, are affinely
independent and Meta-algorithm~\ref{alg:PolytopesDistanceRobust} executes Step~3 of the coefficient correction method,
then $\gamma_{\min} \le 0$;
}

\item{if for some $n \in \mathbb{N}$ Meta-algorithm~\ref{alg:PolytopesDistanceRobust} executes Step 2 (Step 4) 
of the coefficients correction method, then
\begin{gather*}
  \Big\| \sum_{i \in I_n \setminus \{ k \}} \gamma^{(i)} (x_i - x_k) \Big\| 
  < \frac{\theta_{n - 1} - \theta_n}{\nu}, \quad 
  \sum_{i \in I_n \setminus \{ k \}} \gamma^{(i)} \ne 0
  \\
  \bigg( \Big\| \sum_{j \in J_n \setminus \{ k \}} \gamma^{(j)} (y_j - y_k) \Big\| 
  < \frac{\theta_{n - 1} - \theta_n}{\nu}, \quad 
  \sum_{j \in J_n \setminus \{ k \}} \gamma^{(j)} \ne 0 \bigg),
\end{gather*}
where $\nu > 0$ is computed on Step~2 (Step~4) (if $n = 0$, then only the second inequality should be satisfied).
}
\end{enumerate}
Then Meta-algorithm~\ref{alg:PolytopesDistanceRobust} is correctly defined, executes the coefficients correction method
at most once per iteration, terminates after a finite number of iterations, and returns a pair 
$(v_n(\varepsilon), w_n(\varepsilon)) \in P \times Q$ such that 
\[
  \| v_n(\varepsilon) - w_n(\varepsilon) - (v_* - w_*) \| \le \sqrt{2 \eta}, \quad
  \| v_n(\varepsilon) - w_n(\varepsilon) \| \le \dist(P, Q) + \sqrt{2 \eta},
\]
where $(v_*, w_*)$ is an optimal solution of the problem $(\mathcal{D})$.
\end{theorem}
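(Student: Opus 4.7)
The plan is to follow the same architecture as the proof of Theorem~\ref{thm:RobustMetaAlgCorrectness} for the nearest point problem, adapted to the two-polytope setting, and to lean heavily on Lemma~\ref{lem:ApproxDistanceDecay_DistProblem} (the analogue of Lemma~\ref{lem:ApproxDistanceDecay}) together with Proposition~\ref{prp:ApproximateOptimalityDistance}. Concretely, once we establish that (i)~the coefficients correction method terminates and performs at most one pass per iteration, and (ii)~after that pass the updated coefficients $\alpha_n(\varepsilon), \beta_n(\varepsilon)$ satisfy the minimal-zero-coefficient conditions \eqref{eq:ZeroCoeffsCondition} while preserving $\theta_n<\theta_{n-1}$, Lemma~\ref{lem:ApproxDistanceDecay_DistProblem} delivers $\theta_{n+1}<\theta_n$ on Step~2, hence a strict approximate distance decay $\theta_0>\theta_1>\cdots$ along the whole run. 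Since each $(P_n,Q_n)$ is a pair of convex hulls of $d+1$ points from the finite sets $\{x_1,\dots,x_\ell\}$ and $\{y_1,\dots,y_m\}$, strict decay forces all such pairs to be distinct, so the method must reach the stopping criterion $\rho_{xn}\ge-\eta$ and $\rho_{yn}\ge-\eta$ in finitely many iterations; Proposition~\ref{prp:ApproximateOptimalityDistance} then yields the two claimed norm bounds.

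First I would prove by induction on $n$ that the coefficients correction method is invoked at most once per iteration and preserves $\theta_n<\theta_{n-1}$. The base $n=0$ and the inductive step are handled the same way, so I would focus on the step. Assume the approximate distance decay \eqref{eq:ZeroCoeffsCondition}-style chain holds up through iteration $n-1$ and the algorithm reaches Step~3 on iteration $n$. Since we are executing Step~3, the stopping criterion fails, so at least one of $\rho_{xn}<-\eta$ or $\rho_{yn}<-\eta$ holds; I would also check, exactly as in the proof of Theorem~\ref{thm:RobustMetaAlgCorrectness}, that $v_n(\varepsilon)\notin Q_n$ or $w_n(\varepsilon)\notin P_n$ in the relevant sense by contradicting the stopping criterion via the first bound in \eqref{eq:ParametersConsistency_DistProblem} and approximate optimality condition~1.

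The analysis of the coefficients correction method splits (for the $P_n$-side, Steps~1--2; the $Q_n$-side on Steps~3--4 is symmetric). I would handle the three cases for $\gamma_{\min}$ on Step~1 just as Cases~I--III in the proof of Theorem~\ref{thm:RobustMetaAlgCorrectness}: when $\gamma_{\min}<0$, the coefficient $\mu\in(0,1)$ produces a convex combination of the old $\alpha_n(\varepsilon)$ and $\gamma_n$ whose support loses the index realising the minimum in the definition of $\mu$, and condition~2 gives $\|h_n-w_n(\varepsilon)\|\le\theta_n$ so that the updated $v_n(\varepsilon)$ satisfies $\|v_n(\varepsilon)-w_n(\varepsilon)\|\le\theta_n<\theta_{n-1}$; when $\gamma_{\min}=0$ the update is immediate; when $\gamma_{\min}>0$, condition~3 forces affine dependence of $\{x_i\mid i\in I_n\}$, so Step~2 is reached with $\lambda_n$ supplied by condition~4 ensuring $\sum_{i\in I_n\setminus\{k\}}\lambda_n^{(i)}\ne0$, and the Carath\'eodory-style shift by $\nu$ zeroes out at least one coordinate while, by the displayed bound in condition~4, keeping $\|v_n(\varepsilon)-w_n(\varepsilon)\|<\theta_{n-1}$. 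Running the mirror argument on Steps~3--4 gives the same conclusion for $\beta_n(\varepsilon)$, so after Step~3 of the meta-algorithm both vectors satisfy \eqref{eq:ZeroCoeffsCondition} and $\theta_n<\theta_{n-1}$.

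At that point Lemma~\ref{lem:ApproxDistanceDecay_DistProblem} applies to the (possibly corrected) state and guarantees $\theta_{n+1}<\theta_n$ when Step~2 is re-executed, so the meta-algorithm increments $n$ without a second call to the coefficients correction method; this closes the induction and simultaneously establishes the approximate distance decay along the whole sequence. Finite termination and the bounds on the returned pair then follow exactly as indicated at the start of the plan via the pigeonhole argument on $(d+1)$-subsets and Proposition~\ref{prp:ApproximateOptimalityDistance}. The main obstacle I expect is keeping bookkeeping clean across the four sub-steps of the correction method, particularly verifying in Case~III that the affine-dependence branch does not break the invariant $\theta_n<\theta_{n-1}$ and that the resulting $\alpha_n(\varepsilon),\beta_n(\varepsilon)$ remain genuine convex-combination coefficients; the Carath\'eodory-type estimates in condition~4 are precisely the quantitative hypothesis that makes this bookkeeping go through, and getting the constants in \eqref{eq:ParametersConsistency_DistProblem} to line up with the two-polytope geometry (the $\diam(P)^2\diam(Q)^2/(\diam(P)^2+\diam(Q)^2)$ factor) is the one place where the two-polytope proof differs non-trivially from the nearest-point one and deserves careful checking.
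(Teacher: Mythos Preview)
Your proposal is correct and matches the paper's own approach: the paper explicitly omits the proof, stating that it ``is essentially the same as the proof of Theorem~\ref{thm:RobustMetaAlgCorrectness},'' which is precisely the architecture you describe (induction on $n$, three cases for $\gamma_{\min}$ on the $P_n$-side and symmetrically on the $Q_n$-side, then Lemma~\ref{lem:ApproxDistanceDecay_DistProblem} for the decay and Proposition~\ref{prp:ApproximateOptimalityDistance} for the final bounds). Your identification of the one genuinely new ingredient---the two-polytope constant $\diam(P)^2\diam(Q)^2/(\diam(P)^2+\diam(Q)^2)$ in \eqref{eq:ParametersConsistency_DistProblem}---is apt, but that estimate is already absorbed into Lemma~\ref{lem:ApproxDistanceDecay_DistProblem}, so no additional work is needed there.
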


The proof of this theorem is essentially the same as the proof of Theorem~\ref{thm:RobustMetaAlgCorrectness}. That is
why we omit it for the sake of shortness.

\subsection{Numerical experiments}
\label{subsect:NumExperiments_Dist}

The acceleration technique for methods for computing the distance between two polytopes described in 
Meta-algorithm~\ref{alg:PolytopesDistanceRobust} was verified numerically for various values of $d$, $\ell$, and $m$.
Let us briefly describe the results of our numerical experiments.

We set $\ell = m$ and for each choice of $d$ and $\ell$ randomly generated 10 problems. Average computation time for 
these problem was used to assess the efficiency of the acceleration technique.

The problem data was generated similarly to the case of the nearest point problem. First, we generated $2 \ell$ points 
$\{ \widehat{x}_1, \ldots, \widehat{x}_{\ell} \}$ and $\{ \widehat{y}_1, \ldots, \widehat{y}_{\ell} \}$, uniformly 
distributed over the $d$-dimensional cube $[-1, 1]^d$. Then these points were compressed and shifted as follows
\begin{align*}
  x_i &= (1 + 0.01 \widehat{x}_i^{(1)}, \widehat{x}_i^{(2)}, \ldots, \widehat{x}_i^{(d)}) 
  \quad \forall i \in \{ 1, \ldots, \ell \},
  \\
  y_j &= (- 1 + 0.01 \widehat{y}_i^{(1)}, \widehat{y}_i^{(2)}, \ldots, \widehat{y}_i^{(d)}) 
  \quad \forall i \in \{ 1, \ldots, \ell \},
\end{align*}
so that the polytopes $P$ and $Q$ do not intersect. Numerical experiments showed that this particular problem
is especially challenging for methods for computing the distance between polytopes.

Without trying to conduct comprehensive numerical experiments, we tested the acceleration technique on 2 methods:
a modification of the MDM method for computing the distance between polytopes called ALT-MDM \cite{ALTMDM}, and the
method based on solving the quadratic programming problem 
\begin{align*}
  &\min_{(\alpha, \beta)} \:
  \frac{1}{2} \Big\| \sum_{i = 1}^{\ell} \alpha^{(i)} x_i - \sum_{j = 1}^{\ell} \beta^{(j)} y_j \Big\|^2 \quad 
  \text{subject to} 
  \\ 
  &\sum_{i = 1}^{\ell} \alpha^{(i)} = 1, \quad \alpha^{(i)} \ge 0, \quad i \in I, \quad
  \quad \sum_{j = 1}^{\ell} \beta^{(j)} = 1, \quad \beta^{(j)} \ge 0, \quad j \in J
\end{align*}
with the use of \texttt{quadprog}, the standard \textsc{Matlab} routine for solving quadratic programming problems. 
We used this routine with default settings. The inequality $\Delta_x(u_k, v_k) + \Delta_y(u_k, v_k) < 10^{-4}$ was used
as the termination criterion for the ALT-MDM method (see \cite{ALTMDM}). The number of iterations of this method was
limited to $10^6$.

Both, the ALT-MDM method and the quadratic programming method were implemented ``on their own'' and also incorporated
within the robust acceleration technique (Meta-algorithm~\ref{alg:PolytopesDistanceRobust}). The initial guess for the
meta-algorithm was chosen as
\[
  I_0 = J_0 = \{ 1, \ldots, d + 1 \}, \quad P_0 = \co\{ x_1, \ldots, x_{d + 1} \}, \quad
  Q_0 = \co\{ y_1, \ldots y_{d + 1} \}.
\]
We also set $\eta = 10^{-4}$. 

\begin{figure}[t] 
\includegraphics[width=0.5\textwidth]{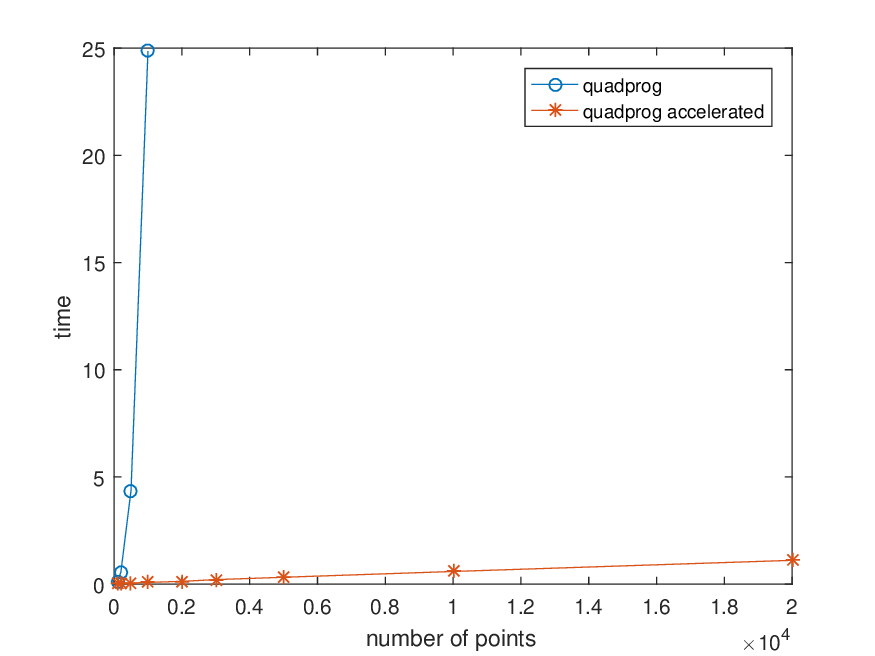}
\includegraphics[width=0.5\textwidth]{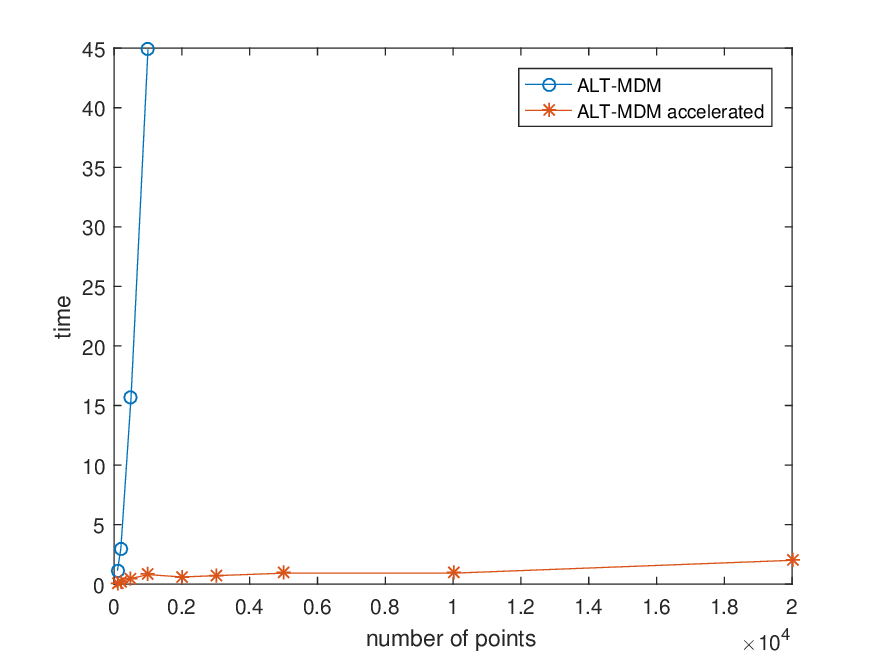}
\caption{The results of numerical experiments in the case $d = 3$ for \texttt{quadprog} routine (left figure) and
the ALT-MDM method (right figure).}
\label{fig:Dist3}
\end{figure}

\begin{figure}[t]
\includegraphics[width=0.5\textwidth]{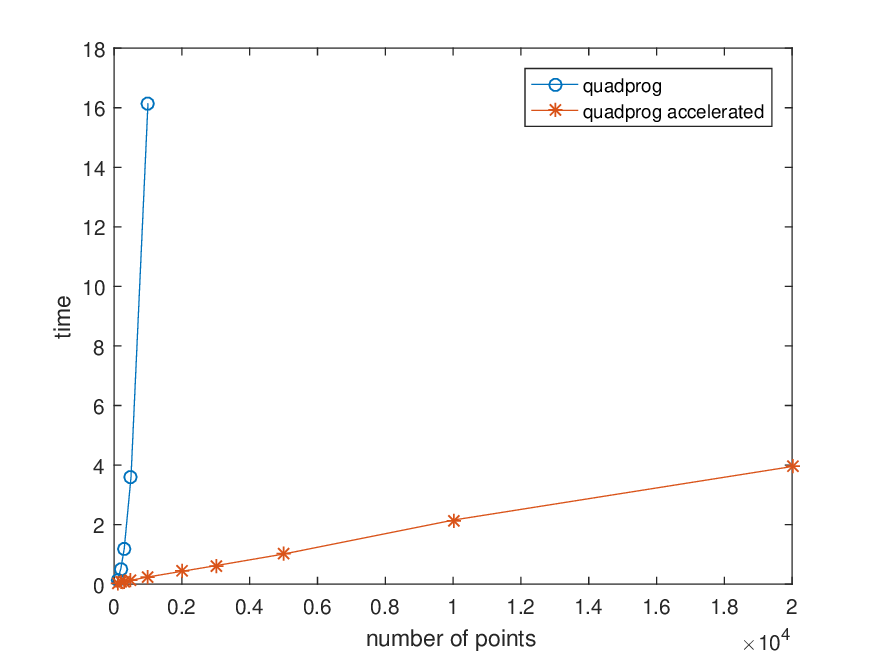}
\includegraphics[width=0.5\textwidth]{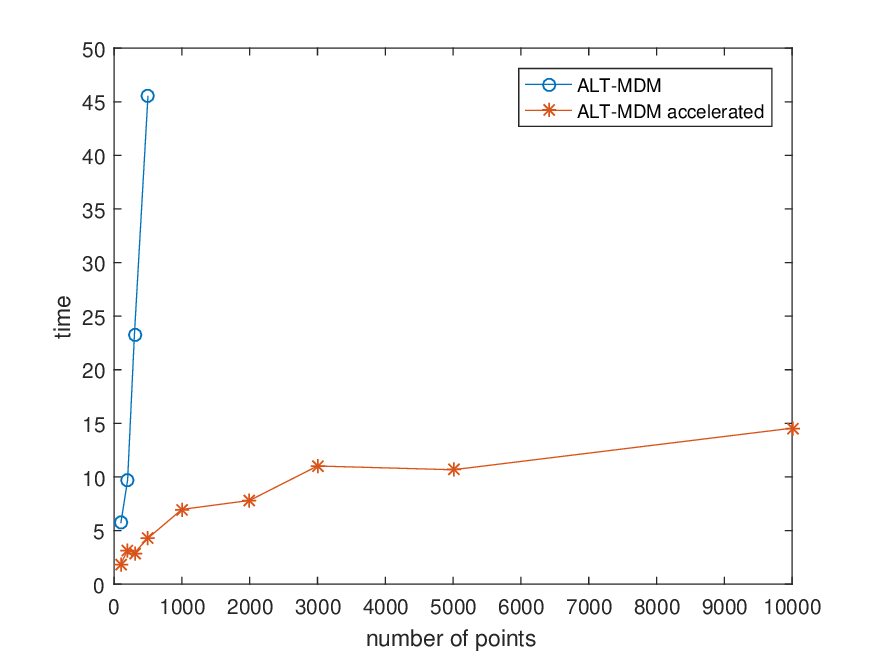}
\caption{The results of numerical experiments in the case $d = 10$ for \texttt{quadprog} routine (left figure) and
the ALT-MDM method (right figure).}
\label{fig:Dist10}
\end{figure}

% \begin{figure}[t] 
% \includegraphics[width=0.5\textwidth]{DoubleQuadProg_50}
% \caption{The results of numerical experiments in the case $d = 50$ for \texttt{quadprog} routine.}
% \label{fig:Dist50}
% \end{figure}

The results of numerical experiments are given in Fig.~\ref{fig:Dist3}--\ref{fig:Dist10}. Let us point out
that these results are qualitatively the same as the results of numerical experiments for the nearest point problem
given in Sect.~\ref{subsect:NumExperiments_NPP}. Therefore, the discussion of numerical experiments from 
Sect.~\ref{subsect:NumExperiments_NPP} is valid for Meta-algorithm~\ref{alg:PolytopesDistanceRobust} as well.

\bibliographystyle{abbrv}  
\bibliography{PolytopeDist_bibl}

\end{document}